\definecolor{note}{rgb}{0,0,1}  
\newtheorem{theorem}{Theorem}
\newtheorem{definition}[theorem]{Definition}
\newtheorem{proposition}[theorem]{Proposition}
\newtheorem{lemma}[theorem]{Lemma}
\newtheorem{corollary}[theorem]{Corollary}
\newtheorem{remark}[theorem]{Remark}
\numberwithin{equation}{section}
\numberwithin{theorem}{section}
\newcommand{\op}{\operatorname}
\newcommand{\Z}{\mathbb{Z}}
\newcommand{\Q}{\mathbb{Q}}
\newcommand{\be}{\begin{enumerate}}
\newcommand{\ee}{\end{enumerate}}
\title{Higher-dimensional Heegaard Floer homology and the polynomial representation of double affine Hecke algebras}
\author{Yuan Gao}
\address{School of Mathematics, Nanjing University, Nanjing 210093, China}
\email{yuangao@nju.edu.cn} \urladdr{}
\author{Eilon Reisin-Tzur}
\address{University of California, Los Angeles, Los Angeles, CA 90095}
\email{eilon.tzur@gmail.com} \urladdr{}
\author{Yin Tian}
\address{School of Mathematical Sciences, Beijing Normal University; 
Laboratory of Mathematics and Complex Systems, Ministry of Education, Beijing 100875, China}
\email{yintian@bnu.edu.cn} \urladdr{}
\author{Tianyu Yuan}
\address{School of Mathematical Sciences, Eastern Institute of Technology, Ningbo, Zhejiang, 315200, China}
\email{tyyuan@eitech.edu.cn} \urladdr{}
\date{\today}
\subjclass[2010]{Primary 53D40; Secondary 20C08.}
\begin{document}

\maketitle

\begin{abstract}
    We show that the higher-dimensional Heegaard Floer homology between tuples of cotangent fibers and the conormal bundle of a homotopically nontrivial simple closed curve on $T^2$ recovers the polynomial representation of double affine Hecke algebra of type A. 
    We also give a topological interpretation of Cherednik's inner product on the polynomial representation.
\end{abstract}

\tableofcontents

\section{Introduction}


In \cite{honda2022jems,tian2025}, Honda, Tian and Yuan showed that the higher-dimensional Heegaard Floer (HDHF) group of tuples of cotangent fibers can recover various kinds of Hecke algebras of type A.
The second author then showed that the HDHF group of tuples of conormal bundles of disjoint simple closed curves on a surface gives a module structure over the surface Hecke algebras \cite{reisintzur2023floertheoretic}. In particular, it can recover the polynomial representation of the double affine Hecke algebra $\ddot{H}_{\kappa}$ when the surface is a torus.
However, \cite{reisintzur2023floertheoretic} considers tuples of conormal bundles of parallel copies of closed curves. The number of generators is larger than that for the polynomial representation, and one has to average over permutation elements to get the expected result.

In this paper, we provide a more natural approach to the polynomial representation of $\ddot{H}_{\kappa}$ via HDHF of the conormal bundle of a single closed curve $\alpha$.
More interesting phenomena occur since pseudoholomorphic curves may undergo self-intersections along $\alpha$.
This leads to a new type of symplectic skein relations compared to that of Ekholm and Shende \cite{ekholm2021skeins}; see Section \ref{section-local}.

Let $\Sigma$ be a closed surface with $\mathrm{genus}\geq1$, $\alpha$ be a nontrivial simple loop on $\Sigma$, and $\mathbf{q}=\{q_1,\dots,q_{\kappa}\}$ be a $\kappa$-tuple of distinct points on $\Sigma$ away from $\alpha$. 
On the skein theory side, Morton and Samuelson  \cite{morton2021dahas} defined the {\em braid skein algebra} $\op{BSk}_\kappa(\Sigma,\mathbf{q})$, which is a quotient of the braid group of $\Sigma$ by certain skein relations.
We consider a variant, the {\em braid skein module} $\op{BSk}_\kappa(\Sigma,\mathbf{q},\alpha)$ which is generated by braids from $\mathbf{q}$ to $\alpha$.  
It is naturally a module over $\op{BSk}_\kappa(\Sigma,\mathbf{q})$ via the concatenation of braids along $\mathbf{q}$.
On the symplectic geometry side, we define the wrapped HDHF group between the conormal bundle $T^*_\alpha\Sigma$ and the tuples of cotangent fibers $T^*_\mathbf{q}\Sigma$, denoted by $HW(T^*\Sigma,\mathbf{q},\alpha,\mathbf{p}_0)^\varphi$, where the other parameters are to be explained later.
This HDHF group is naturally a module over $HW(T^*\Sigma,\mathbf{q})$, the HDHF endomorphism algebra of the cotangent fibers $T^*_\mathbf{q}\Sigma$.
This algebra is  isomorphic to $\op{BSk}_\kappa(\Sigma,\mathbf{q})\otimes_{\mathbb{Z}[\hbar]}\mathbb{Z}\llbracket\hbar\rrbracket$; see \cite{honda2022jems}.
Our first result is the following. 

\begin{theorem} \label{thm main1}
There is an isomorphism 
$$HW(T^*\Sigma,\mathbf{q},\alpha,\mathbf{p}_0)^\varphi \cong \op{BSk}_\kappa(\Sigma,\mathbf{q},\alpha)\otimes_{\mathbb{Z}[\hbar]}\mathbb{Z}\llbracket\hbar\rrbracket,$$ which commutes with the actions of $HW(T^*\Sigma,\mathbf{q})$ and $\op{BSk}_\kappa(\Sigma,\mathbf{q})$, respectively.
\end{theorem}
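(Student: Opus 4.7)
The plan is to mimic the proof that $HW(T^*\Sigma,\mathbf{q})\cong\op{BSk}_\kappa(\Sigma,\mathbf{q})\otimes_{\Z[\hbar]}\Z\llbracket\hbar\rrbracket$ from \cite{honda2022jems}, upgrading it from an algebra isomorphism to a module isomorphism. I would construct an explicit $\Z\llbracket\hbar\rrbracket$-linear map
\[
\Phi\colon \op{BSk}_\kappa(\Sigma,\mathbf{q},\alpha)\otimes_{\Z[\hbar]}\Z\llbracket\hbar\rrbracket \longrightarrow HW(T^*\Sigma,\mathbf{q},\alpha,\mathbf{p}_0)^\varphi
\]
sending a braid $\beta$ from $\mathbf{q}$ to $\alpha$ to the Floer generator determined by lifting $\beta$ to a $\kappa$-tuple of Reeb chords from $\Lambda_\alpha$ to $\Lambda_{\mathbf{q}}$ at the ideal contact boundary (with the chosen basepoint $\mathbf{p}_0$ and wrapping Hamiltonian $\varphi$ recording the winding data along $\alpha$). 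The isomorphism will be proven by (i) showing that $\Phi$ is well-defined, i.e.\ it descends through all defining skein relations of $\op{BSk}_\kappa(\Sigma,\mathbf{q},\alpha)$, (ii) exhibiting an inverse on generators, and (iii) verifying compatibility with the $HW(T^*\Sigma,\mathbf{q})$-action.

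\textbf{Generators.} After a small perturbation of $\alpha$ making it non-tangential to the Reeb flow, the generators of $HW(T^*\Sigma,\mathbf{q},\alpha,\mathbf{p}_0)^\varphi$ are ordered $\kappa$-tuples of Reeb chords from $T^*_\alpha\Sigma$ to $T^*_{q_i}\Sigma$, with ends matching a fixed bijection. Trajectories of such Reeb chords, when projected to $\Sigma$, give ordered paths from the $\kappa$ points $\mathbf{q}$ to $\alpha$; together with the wrapping these assemble into a braid in $\Sigma\times[0,1]$ starting at $\mathbf{q}$ and ending on $\alpha\times\{1\}$. This matches the generators of $\op{BSk}_\kappa(\Sigma,\mathbf{q},\alpha)$ prior to imposing the skein relations, and defines $\Phi$ on braid representatives.

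\textbf{Verifying the skein relations (main obstacle).} The essential work is to show that each of the defining relations of $\op{BSk}_\kappa(\Sigma,\mathbf{q},\alpha)$ is realized by a Floer differential in $HW(T^*\Sigma,\mathbf{q},\alpha,\mathbf{p}_0)^\varphi$. The HOMFLY-type crossing relation and the marked-point relation of Morton--Samuelson were already matched to HDHF-differentials in \cite{honda2022jems} by a local computation of moduli of pseudoholomorphic curves near a double point and near a marked cotangent fiber, and these arguments transplant verbatim away from $\alpha$. The genuinely new ingredient is the relation arising when a strand of the braid crosses $\alpha$: because the pseudoholomorphic curves bounding $T^*_\alpha\Sigma$ are allowed to self-intersect along $\alpha$, the boundary of the $1$-dimensional moduli space exhibits an extra term absent in the Ekholm--Shende framework of \cite{ekholm2021skeins}. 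This is the symplectic skein relation promised in Section~\ref{section-local}. The hard part of the proof is the local neighborhood-of-$\alpha$ computation identifying this boundary contribution with the $\alpha$-crossing skein relation defining the braid skein module; this requires a careful transversality and Fredholm index analysis in a standard local model $T^*(\R\times S^1)$, together with the identification of relative spin/orientation signs. I would carry this out by building an explicit Morse--Bott/cascade model near $T^*_\alpha\Sigma$ and counting rigid cascades.

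\textbf{Inverse map and module structure.} Once the relations are verified, surjectivity of $\Phi$ is immediate from the generator count above. Injectivity follows by constructing a left inverse $\Psi$ geometrically: given a Floer generator, read off its underlying braid from the projection of the Reeb chords to $\Sigma$; the fact that $\Psi\circ\Phi=\mathrm{id}$ on representatives and $\Phi\circ\Psi=\mathrm{id}$ on generators then gives the isomorphism. For the module structure, the HDHF triangle product with $HW(T^*\Sigma,\mathbf{q})$ is computed in \cite{honda2022jems} as concatenation of braids along $\mathbf{q}$, up to the same skein relations; the identical local models apply here since no pseudoholomorphic triangle in the composition $HW(T^*\Sigma,\mathbf{q})\otimes HW(T^*\Sigma,\mathbf{q},\alpha,\mathbf{p}_0)^\varphi \to HW(T^*\Sigma,\mathbf{q},\alpha,\mathbf{p}_0)^\varphi$ can hit $\alpha$ at an interior puncture. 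Thus $\Phi$ intertwines the two module actions and the theorem follows.
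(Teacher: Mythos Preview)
Your proposal rests on a mechanism that does not exist in this setting. You write that ``each of the defining relations of $\op{BSk}_\kappa(\Sigma,\mathbf{q},\alpha)$ is realized by a Floer differential in $HW(T^*\Sigma,\mathbf{q},\alpha,\mathbf{p}_0)^\varphi$'', but since $\mathrm{genus}(\Sigma)\geq 1$ all generators of $CW(T^*_\mathbf{q}\Sigma,T^*_\alpha\Sigma)$ sit in degree $0$ (Lemma~\ref{lemma-ind}), so $\mu^1$ and all higher $\mu^m_\alpha$ with $m\neq 2$ vanish identically; $HW=CW$ is free on its generators and there are no relations coming from any differential. This is also a misreading of \cite{honda2022jems}: there the HOMFLY and marked-point relations are \emph{not} matched to Floer differentials either; they arise as nodal boundary strata of $1$-dimensional moduli spaces in the proof that the comparison map $\mathcal{F}$ is an \emph{algebra homomorphism}. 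Consequently your map $\Phi$ from skein to Floer cannot be shown to be well-defined in the way you describe, and the direction you have chosen is the unnatural one.

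The paper goes the other way. One defines $\mathcal{F}_\alpha\colon HW(T^*\Sigma,\mathbf{q},\alpha,\mathbf{p}_0)\to\op{BSk}_\kappa(\Sigma,\mathbf{q},\alpha,\mathbf{p}_0)\otimes_{\Z[\hbar]}\Z\llbracket\hbar\rrbracket$ by counting rigid curves in moduli spaces $\mathcal{H}(\boldsymbol{y},\alpha,\mathbf{q})$ with one boundary arc on the zero section and one on $T^*_\alpha\Sigma$, recording the boundary evaluations $(\mathcal{E}_\alpha(u),\mathcal{E}_\Sigma(u))$ as a skein element. The skein relations---including the new corner relation~(\ref{eq-corner})---enter as the three types of nodal boundary degenerations of the $1$-dimensional moduli $\mathcal{H}(\boldsymbol{y},\alpha,\mathbf{q},\boldsymbol{x})$ (Figure~\ref{fig-T2-nodal}); this is what makes $\mathcal{F}_\alpha$ a \emph{module} homomorphism (Lemma~\ref{lemma-homo}), not what makes it well-defined. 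Bijectivity is then \emph{not} obtained by the generator count you sketch: one first proves $\overline{\mathcal{F}}_\alpha|_{\hbar=0}$ is an isomorphism via Abbondandolo--Schwarz (Lemma~\ref{lemma-F-iso-0}), identifies $\op{BSk}_\kappa(\Sigma,\mathbf{q},\alpha,\mathbf{p}_0)^\varphi\cong\op{BSk}_\kappa(\Sigma,\mathbf{q},\alpha)$ (Lemma~\ref{lemma-psi-iso}), and finally runs the $\hbar$-adic filtration argument of \cite[Theorem~1.4]{honda2022jems}. Your outline is missing both the correct direction of the comparison map and this two-step ($\hbar=0$ plus completion) argument for bijectivity.
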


We then focus on the case $\Sigma=T^2$. 
Morton and Samuelson showed that the braid skein algebra $\op{BSk}_\kappa(T^2,\mathbf{q})$ is isomorphic to the double affine Hecke algebra $\ddot{H}_{\kappa}$ \cite{morton2021dahas}.
We prove that the braid skein module $\op{BSk}_\kappa(\Sigma,\mathbf{q},\alpha)$ is isomorphic to the polynomial representation $P_{\kappa}$ of $\ddot{H}_{\kappa}$. 
So we have the following corollary.
\begin{corollary} \label{cor}
   The HDHF group $HW(T^*T^2,\mathbf{q},\alpha,\mathbf{p}_0)^\varphi$ is isomorphic to the polynomial representation of $\ddot{H}_{\kappa}$ after tensoring with $\mathbb{Z}\llbracket\hbar\rrbracket$.
\end{corollary}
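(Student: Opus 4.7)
By Theorem~\ref{thm main1} applied to $\Sigma = T^2$, the space $HW(T^*T^2,\mathbf{q},\alpha,\mathbf{p}_0)^\varphi$ is isomorphic to $\op{BSk}_\kappa(T^2,\mathbf{q},\alpha) \otimes_{\mathbb{Z}[\hbar]} \mathbb{Z}\llbracket\hbar\rrbracket$ as a module over $HW(T^*T^2,\mathbf{q}) \cong \op{BSk}_\kappa(T^2,\mathbf{q}) \otimes_{\mathbb{Z}[\hbar]} \mathbb{Z}\llbracket\hbar\rrbracket$. Combining this with the Morton--Samuelson identification $\op{BSk}_\kappa(T^2,\mathbf{q}) \cong \ddot{H}_{\kappa}$, the corollary reduces to a purely topological statement: the braid skein module $\op{BSk}_\kappa(T^2,\mathbf{q},\alpha)$ is isomorphic, as a $\ddot{H}_\kappa$-module, to the polynomial representation $P_\kappa$. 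It is this isomorphism that I would construct explicitly and then tensor with $\mathbb{Z}\llbracket\hbar\rrbracket$ to conclude.

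To construct it, fix $\alpha$ to be a meridian of $T^2$ and arrange the basepoints $\mathbf{q}=\{q_1,\dots,q_\kappa\}$ along a longitude in some chosen order. Let $v_0 \in \op{BSk}_\kappa(T^2,\mathbf{q},\alpha)$ be the ``ground state'' braid consisting of $\kappa$ short arcs, the $i$-th arc connecting $q_i$ to its nearest point on $\alpha$. My first task is to show that every element of $\op{BSk}_\kappa(T^2,\mathbf{q},\alpha)$ can be put, via isotopy and skein relations, into a normal form of the following shape: an element of $\op{BSk}_\kappa(T^2,\mathbf{q})$ acts on the $\mathbf{q}$-end of $v_0$, followed by longitudinal windings recorded by integers $(n_1,\ldots,n_\kappa) \in \mathbb{Z}^\kappa$ on the individual strands near $\alpha$. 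The resulting monomials $X_1^{n_1}\cdots X_\kappa^{n_\kappa}\cdot v_0$ then provide a $\mathbb{Z}[\hbar]$-basis to be matched with the Laurent monomial basis of $P_\kappa$.

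The remaining task is to verify that this identification intertwines the $\ddot{H}_\kappa$-action. Under Morton--Samuelson's topological generators, $X_i$ is a longitudinal loop attached to the $i$-th strand, so it acts by longitudinal winding and matches multiplication by $x_i$ on $P_\kappa$. The Hecke generators $T_j$ are implemented by swapping adjacent strands at $\mathbf{q}$, and the local skein relation there recovers the standard Demazure--Lusztig operator. Finally, the Cherednik generators $Y_j$ are meridional loops and must therefore \emph{cross} $\alpha$, at which point the new symplectic skein relations of Section~\ref{section-local} come into play. The main obstacle will be precisely this last step: verifying that the $\alpha$-crossing skein relations, composed with the standard relations at $\mathbf{q}$, reduce $Y_j \cdot X_1^{n_1}\cdots X_\kappa^{n_\kappa} \cdot v_0$ to exactly the linear combination dictated by Cherednik's recursion for $Y_j$ on $P_\kappa$. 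I expect this to be a bookkeeping exercise in how a strand wrapping $\alpha$ gets resolved, parallel to Morton--Samuelson's derivation of the DAHA relations, but with $\alpha$ serving as an ``absorbing boundary'' rather than a second essential cycle generating the algebra.
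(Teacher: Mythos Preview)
Your reduction step is exactly the paper's: Theorem~\ref{thm main1} plus Morton--Samuelson reduces the corollary to the purely topological statement $\op{BSk}_\kappa(T^2,\mathbf{q},\alpha)\cong P_\kappa$ as $\ddot H_\kappa$-modules (this is the paper's Proposition~\ref{prop BSk poly}). Where you diverge is in how you propose to prove that isomorphism.

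The paper does \emph{not} build a monomial basis and check generator actions. Instead it places $\mathbf{q}$ \emph{on} $\alpha$, so that every braid in $\mathrm{Br}_{\kappa,1}(T^2,\mathbf{q},\star)$ is tautologically a braid in $\mathrm{Br}_{\kappa,1}(T^2,\mathbf{q},\alpha,\star)$; this gives a surjection $\ddot H_\kappa\to \op{BSk}_\kappa(T^2,\mathbf{q},\alpha)$ for free. The whole proof then reduces to identifying the kernel. Since $P_\kappa$ is the quotient of $\ddot H_\kappa$ by the right-ideal relations $\Theta T_i\sim s\Theta$ and $\Theta Y_i\sim s^{\kappa+1-2i}\Theta$, one only has to observe that the corner skein relation~(\ref{eq-corner}) implements exactly these relations (the $T_i$ relation is immediate from~(\ref{eq-corner}); the $Y_i$ relation comes from sliding an endpoint once around $\alpha$, Figure~\ref{fig-alpha-relation}). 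An explicit inverse $g$ is defined by ordering the endpoints on $\alpha$ and isotoping them back to $\mathbf{q}$. No computation of $Y_j$ on a general monomial is needed.

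Your route---normal form, basis, then verify each generator---can be made to work, but it is substantially more laborious and, as written, has two issues. First, you have the geometry inverted: with $\alpha$ a meridian, a \emph{meridional} loop $Y_j$ is parallel to $\alpha$ and does not cross it; it is precisely this parallelism that lets $Y_j$ be absorbed into $\alpha$ via the corner relation and act by the scalar $s^{\kappa+1-2j}$ on the ground state, which is the mechanism behind the induced-module description. The loops that cross $\alpha$ are the $X_i$, but those are supposed to act freely, not through any ``$\alpha$-crossing'' reduction. Second, the relations you need are the purely topological corner relations~(\ref{eq-corner}) of Definition~\ref{def-skein-module}, not the symplectic skein analysis of Section~\ref{section-local}; the latter is what justifies~(\ref{eq-corner}) on the Floer side and has already been absorbed into Theorem~\ref{thm main1}.
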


The double affine Hecke algebra is defined by Cherednik to study the Macdonald polynomials. 
The nonsymmetric Macdonald polynomials give a basis of $P_{\kappa}$
with some nice properties. 
In particular, Cherednik constructed an inner product on $P_{\kappa}$ so that the Macdonald polynomials are pairwise orthogonal. 
Moreover, the action of $\ddot{H}_{\kappa}$ on $P_{\kappa}$ has some adjoint property with respect to Cherednik's inner product.

Our second result is to give a topological interpretation of Cherednik's inner product via braid skeins on $T^2$. 
Consider a composition of two maps: 
$$\op{BSk}_\kappa(T^2,\mathbf{q},\alpha) \otimes \op{BSk}_\kappa(T^2,\mathbf{q},\alpha) \to \op{BSk}_\kappa(T^2,\alpha,\mathbf{q}) \otimes \op{BSk}_\kappa(T^2,\mathbf{q},\alpha) \to \op{BSk}_\kappa(T^2,\alpha,\alpha),$$
where the first map is taking the inverse braid for the first factor, and the second map is a concatenation of braids along $\mathbf{q}$. 
We show that the image of the map is one dimensional over the ground field. 
So we have a bilinear form on $\op{BSk}_\kappa(T^2,\mathbf{q},\alpha)$.

\begin{theorem} \label{thm main2}
This bilinear form is isomorphic to Cherednik's inner product under the isomorphism $\op{BSk}_\kappa(T^2,\mathbf{q},\alpha) \cong P_{\kappa}$.
\end{theorem}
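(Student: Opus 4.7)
The plan is to characterize Cherednik's inner product by two properties---an adjunction identity with respect to the $\ddot{H}_\kappa$-action together with a normalization on the cyclic vector $1 \in P_\kappa$---and then verify that the topologically defined bilinear form satisfies the same two properties. First, the one-dimensionality of the image of the composition (asserted in the paragraph preceding the theorem) provides a $\mathbb{Z}\llbracket\hbar\rrbracket$-valued pairing
\[
(-,-)\colon \op{BSk}_\kappa(T^2,\mathbf{q},\alpha) \otimes \op{BSk}_\kappa(T^2,\mathbf{q},\alpha) \to \mathbb{Z}\llbracket\hbar\rrbracket,
\]
after we identify the generator of the target $\op{BSk}_\kappa(T^2,\alpha,\alpha)$-image with $1$; the reduction behind this uses the classical braid relations together with the self-intersection skein relations of Section \ref{section-local} to push any $\alpha$-to-$\alpha$ braid to a canonical cobordism.

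Next I would establish an adjunction property. Braid inversion defines an anti-involution $\iota$ on $\op{BSk}_\kappa(T^2,\mathbf{q})$, and by the very construction of $(-,-)$---invert the first factor, then concatenate along $\mathbf{q}$---one obtains directly
\[
(h \cdot f,\, g) \;=\; (f,\, \iota(h) \cdot g)
\]
for all $h \in \op{BSk}_\kappa(T^2,\mathbf{q})$ and $f,g \in \op{BSk}_\kappa(T^2,\mathbf{q},\alpha)$. Under the Morton--Samuelson isomorphism $\op{BSk}_\kappa(T^2,\mathbf{q}) \cong \ddot{H}_\kappa$ together with Theorem \ref{thm main1}, I would verify that $\iota$ matches the anti-involution of $\ddot{H}_\kappa$ that Cherednik uses in setting up his inner product, by checking its action on the generators $T_i, X_i^{\pm 1}, Y_i^{\pm 1}$, each of which has a transparent realization as a braid and therefore a natural inverse.

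Because $P_\kappa$ is a cyclic $\ddot{H}_\kappa$-module generated by the constant polynomial $1$, the adjunction identity reduces an arbitrary pairing $(f,g)$ to one of the form $(1,\, h \cdot 1)$; hence $(-,-)$ is fully determined by its value on $(1,1)$. The final step is to compute $(1,1)$ in the skein picture---this amounts to evaluating the composition of the standard braid from $\mathbf{q}$ to $\alpha$ with its own inverse---and compare it with Cherednik's value $\langle 1,1\rangle$. A single overall rescaling in $\mathbb{Z}\llbracket\hbar\rrbracket$ (absorbed into the identification of the image with $\mathbb{Z}\llbracket\hbar\rrbracket$) then produces the claimed isomorphism of bilinear forms.

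The hard part, I expect, will be matching the anti-involutions precisely under the Morton--Samuelson dictionary: Cherednik's inner product is defined using a specific anti-involution (in some presentations combined with a bar involution sending $\kappa$ or $\hbar$ to its negative), and one must carefully track signs, Laurent-inversion conventions, and the interaction of braid inversion with the $X_i$- and $Y_i$-loops on $T^2$ to see that the two anti-involutions really coincide. A secondary concern is that the cyclicity argument be valid over $\mathbb{Z}\llbracket\hbar\rrbracket$ rather than over a field, which should follow from the existence of the nonsymmetric Macdonald basis but may need an explicit verification.
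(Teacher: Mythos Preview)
Your strategy is exactly the paper's: use the characterization of Cherednik's inner product by the adjunction (``$\star$-unitary'') property together with a normalization on the cyclic vector, and then verify both for the topological pairing. The adjunction $(h\cdot f,g)=(f,\iota(h)\cdot g)$ is, as you say, immediate from the definition via concatenation and braid inversion.

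Two points where the paper differs from your write-up are worth flagging. First, the paper works over the fraction field $\Q(s,c)$ for this entire subsection, not over $\Z\llbracket\hbar\rrbracket$; this is what makes the one-dimensionality of the target hold (the argument needs $(1-c^{-2\kappa})$ invertible) and dissolves your ``secondary concern'' about cyclicity over a non-field. Second, the step you flag as the hard part---matching $\iota$ with Cherednik's anti-involution---is in fact trivial here: under the Morton--Samuelson dictionary the generators $T_i,X_i,Y_i$ are literal braids, braid inversion sends each to its inverse, and Cherednik's $*$ is precisely $T_i^*=T_i^{-1}$, $X_i^*=X_i^{-1}$, $Y_i^*=Y_i^{-1}$ (no bar on the parameters is needed in this formulation). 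So no careful sign-tracking is required, and the paper's proof is correspondingly one line.
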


The idea of proof is to use the fact that Cherednik's inner product is uniquely determined by the adjoint property for the action of $\ddot{H}_{\kappa}$; see \cite[Proposition 3.3.2]{Cherednik}.  
By the topological construction our bilinear form has the same adjoint property for the action of $\op{BSk}_\kappa(T^2,\mathbf{q})$ which is isomorphic to $\ddot{H}_{\kappa}$. 

\vspace{.2in}
\noindent {\em Further directions:} 
\begin{itemize}
\item It is natural to ask whether we can realize Cherednik's inner product in Floer theory. The difficulty here is to define an appropriate HDHF group between the conormal bundle $T_{\alpha}^*T^2$ to itself since $T_{\alpha}^*T^2$ is a singular Lagrangian in HDHF setting. 
\item On the topological side, it is interesting to look for the braid skein representatives in $\op{BSk}_\kappa(T^2,\mathbf{q},\alpha)$ corresponding to the nonsymmetric Macdonald polynomials. We can then explore the interesting combinatorics via topology on $T^2$.
\end{itemize}




\noindent \textit{Acknowledgements}. 
YT is supported by NSFC 12471064. TY is supported by NSFC 12501080.
The authors thank Ko Honda for helpful discussions.

\section{Braid skein modules}

We review the basics about braid skein algebras and then discuss braid skein modules in this section.

\subsection{Braid skein algebras}
Recall the definition of the braid skein algebra associated to a given surface $\Sigma$ due to Morton and Samuelson \cite{morton2021dahas}.

Let $\mathrm{UConf}_{\kappa}(\Sigma)=\{\{q_1,\dots,q_{\kappa}\}~|~q_i\in \Sigma, ~q_i\neq q_j ~\mbox{for}~i\neq j\}$ be the unordered configuration space of $\kappa$ disjoint points on $\Sigma$. 
Fix a basepoint $\mathbf{q} \in \mathrm{UConf}_{\kappa}(\Sigma)$.
The based loop space $\Omega(\mathrm{UConf}_{\kappa}(\Sigma),\mathbf{q})$ consists of $\kappa$-strand braids on $\Sigma$.
Note that $H_0(\Omega(\mathrm{UConf}_{\kappa}(\Sigma),\mathbf{q}))$ is isomorphic to the group algebra $\mathbb{Z}[\mathrm{Br}_{\kappa}(\Sigma,\mathbf{q})]$, where $\mathrm{Br}_{\kappa}(\Sigma,\mathbf{q})$ denotes the braid group of $\Sigma$ based at $\mathbf{q}$.
Given two basepoints $\mathbf{q},\mathbf{q}'\in\mathrm{UConf}_{\kappa}(\Sigma)$, we also write $\mathrm{Br}_{\kappa}(\Sigma,\mathbf{q},\mathbf{q}')$ to denote the space of braids that start from $\mathbf{q}$ and end on $\mathbf{q}'$.

Fix a marked point $\star \in \Sigma$ which is disjoint from $\mathbf{q}$. 
Let $\mathrm{Br}_{\kappa,1}(\Sigma,\mathbf{q},\star)$ be the subgroup of $\mathrm{Br}_{\kappa+1}(\Sigma,\mathbf{q}\sqcup\{\star\})$ consisting of braids whose last strand connects $\star$ to itself by a straight line in $[0,1]\times\Sigma$.
For notation simplicity, when we write $\gamma\in\mathrm{Br}_{\kappa,1}(\Sigma,\mathbf{q},\star)$, we always mean the first $\kappa$ strands.

\begin{definition}[Morton-Samuelson]
    \label{def-skein}
    The braid skein algebra $\mathrm{BSk}_\kappa(\Sigma,\mathbf{q})$ is the quotient of the group algebra $\mathbb{Z}[s^{\pm1}, c^{\pm1}][\mathrm{Br}_{\kappa,1}(\Sigma,\mathbf{q},\star)]$ by two local relations:
    \begin{enumerate}
    \item the HOMFLY skein relation 
        \begin{equation}
            \label{eq-skein'}
            \includegraphics[width=1cm,valign=c]{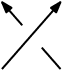}-\includegraphics[width=1cm,valign=c]{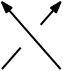}=(s-s^{-1})\includegraphics[width=1cm,valign=c]{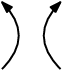},
        \end{equation}
    \item the marked point relation
        \begin{equation}
            \label{eq-c}
            \includegraphics[height=2cm,valign=c]{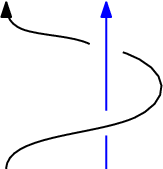}\,=\,c^2\,\includegraphics[height=2cm,valign=c]{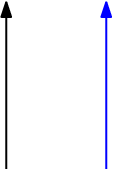}.
        \end{equation}
    \end{enumerate}
    Here the black lines are strands between basepoints in $\mathbf{q}$ and the straight blue line connects the marked point $\star$ to itself. 
    
    The product is given by the concatenation of braids. 
\end{definition}

In this paper, we denote $\hbar=s-s^{-1}$.

\subsection{Braid skein modules}
Let $\alpha$ be a homotopically nontrivial simple closed curve on $\Sigma$ which is disjoint from $\mathbf{q}\sqcup\{\star\}$.
Consider the path space
\begin{equation*}
    \Omega(\op{UConf}_\kappa(\Sigma),\mathbf{q},\alpha)\coloneqq \{\gamma\in C^0([0,1],\op{UConf}_\kappa(\Sigma))\,|\,\gamma(0)=\mathbf{q},\gamma(1)\subset \alpha\}.
\end{equation*}

Denote the set of homotopy classes of  paths in $\Omega(\op{UConf}_\kappa(\Sigma),\mathbf{q},\alpha)$, viewed as braids that start from $\mathbf{q}$ and end on $\alpha$, by $\op{Br}_\kappa(\Sigma,\mathbf{q},\alpha)$.
Similarly, we define $\op{Br}_{\kappa,1}(\Sigma,\mathbf{q},\alpha,\star)$ to consist of braids that start from $\mathbf{\mathbf{q}\sqcup\{\star\}}$ and end on $\alpha\sqcup\{\star\}$ such that the last strand connects $\star$ to itself by a straight line in $[0,1]\times\Sigma$.

\begin{definition}
    \label{def-skein-module}
    The braid skein module $\mathrm{BSk}_\kappa(\Sigma,\mathbf{q},\alpha)$ is the quotient of the free module $\mathbb{Z}[s^{\pm1}, c^{\pm1}][\mathrm{Br}_{\kappa,1}(\Sigma,\mathbf{q},\alpha,\star)]$ by the same local relations in Definition \ref{def-skein} together with an additional relation:
    \begin{enumerate}
    
    \item[(3)] the skein relation at a corner
        \begin{equation}
            \label{eq-corner}
            \includegraphics[height=2cm,valign=c]{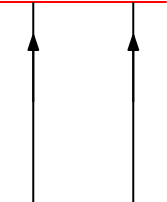}\,=\,s\,\includegraphics[height=2cm,valign=c]{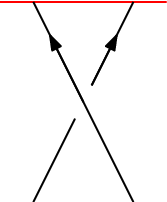}\,=\,s^{-1}\,\includegraphics[height=2cm,valign=c]{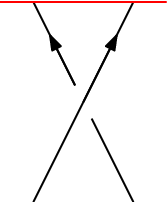},
        \end{equation}
    \end{enumerate}
    where the red arcs denote $\alpha$. 
    
    The module structure over $\mathrm{BSk}_\kappa(\Sigma,\mathbf{q})$ is given by the concatenation of braids at $\mathbf{q}$. 
\end{definition}


\begin{remark}
       The skein relation at a corner is equivalent to the ends-slide relation defined in \cite{reisintzur2023floertheoretic} in the case where the $\alpha_i$ are parallel copies of $\alpha$. To realize this equivalence, we choose conventions to translate between the setup of this paper and that of \cite{reisintzur2023floertheoretic}. We translate the image on the left side of \ref{eq-corner} to the same braid but with the strand on the right ending at a push-off of $\alpha$ which is farther away. Sliding the longer strand on the right past the shorter strand to its left results in a crossing where the shorter strand is on top since the longer strand viewed as a time-1 Hamiltonian chord must travel farther. The ends-slide relation gives us a factor of $s^{-1}$ (where we let $d=s$) attached to the resulting braid. Identifying the push-off of $\alpha$ with $\alpha$, we get the image on the right side of \ref{eq-corner}. 
       
\end{remark}

For later use, we consider the braid skein module on $T^*_\alpha\Sigma$, the conormal bundle of $\alpha$, which is topologically an annulus.
Let $\mathbf{p}_0 \in \mathrm{UConf}_{\kappa}(T^*_\alpha\Sigma\setminus\alpha)$.
The braid groups $\mathrm{Br}_{\kappa}(T^*_\alpha\Sigma,\mathbf{p}_0)$ and $\mathrm{Br}_{\kappa}(T^*_\alpha\Sigma,\mathbf{p}_0,\alpha)$ are defined in the similar way as above.
We then define $\mathrm{BSk}^\circ_\kappa(T^*_\alpha\Sigma,\mathbf{p}_0)$ as the quotient of the free module $\mathbb{Z}[s^{\pm1}][\mathrm{Br}_{\kappa}(T^*_\alpha\Sigma,\mathbf{p}_0)]$ by the local relation (\ref{eq-skein'}) and define $\mathrm{BSk}^\circ_\kappa(T^*_\alpha\Sigma,\mathbf{p}_0,\alpha)$ as the quotient of the free module $\mathbb{Z}[s^{\pm1}][\mathrm{Br}_{\kappa}(T^*_\alpha\Sigma,\mathbf{p}_0,\alpha)]$ by the local relations (\ref{eq-skein'}) and (\ref{eq-corner}).
Here the superscript $\circ$ indicates that we omit the $c$-parameter and the relation (\ref{eq-c}).
Given $\mathbf{p}_0,\mathbf{p}_1\in \mathrm{UConf}_{\kappa}(T^*_\alpha\Sigma\setminus\alpha)$, we define $\mathrm{BSk}^\circ_\kappa(T^*_\alpha\Sigma,\mathbf{p}_0,\mathbf{p}_1)$ analogously.


Consider the space of pairs of paths
\begin{equation*}
    \Omega(\mathbf{q},\alpha,\mathbf{p}_0)\coloneqq\{(\gamma,\eta)\in \Omega(\op{UConf}_\kappa(T^*_\alpha\Sigma),\mathbf{p}_0,\alpha)\times\Omega(\op{UConf}_\kappa(\Sigma),\mathbf{q},\alpha)\,|\,\gamma(1)=\eta(1)\}.
\end{equation*}
Let $\op{Br}_\kappa(\mathbf{q},\alpha,\mathbf{p}_0)$ be the set of homotopy classes of pairs of paths in $\Omega(\mathbf{q},\alpha,\mathbf{p}_0)$.
We define $\op{Br}_{\kappa,1}(\mathbf{q},\alpha,\mathbf{p}_0,\star)$ in a similar manner to $\mathrm{Br}_{\kappa,1}(\Sigma,\mathbf{q},\alpha,\star)$.

\begin{definition}
    \label{def-skein-module-cotangent}
    The braid skein module $\mathrm{BSk}_\kappa(\mathbf{q},\alpha,\mathbf{p}_0)$ is the quotient of the free module $\mathbb{Z}[s^{\pm1},c^{\pm1}][\op{Br}_{\kappa,1}(\mathbf{q},\alpha,\mathbf{p}_0,\star)]$ by the local relations for $\mathrm{BSk}^\circ_\kappa(T^*_\alpha\Sigma,\mathbf{p}_0,\alpha)$ and $\mathrm{BSk}_\kappa(\Sigma,\mathbf{q},\alpha)$ on their respective components of $\Omega(\mathbf{q},\alpha,\mathbf{p}_0)$.
\end{definition}

\section{The Floer interpretation of braid skein modules}

We give a HDHF interpretation of the braid skein modules in this section. 
More precisely, we realize $\mathrm{BSk}_\kappa(\Sigma,\mathbf{q},\alpha,\mathbf{p}_{0})$ as the HDHF group
between the conormal bundle $T^*_\alpha\Sigma$ and the tuples of cotangent fibers $T^*_\mathbf{q}\Sigma$.

\subsection{The Floer homology}
In this subsection, we construct the HDHF chain complex $CW(T^*\Sigma,\mathbf{q},\alpha,\mathbf{p}_0)$, and endow it with the structure of a left $CW(T_{\mathbf{q}}^*\Sigma)$-module.
Here $T_{\mathbf{q}}^*\Sigma\coloneqq \sqcup_{q_i\in\mathbf{q}}T^*_{q_i}\Sigma$ and $CW(T_{\mathbf{q}}^*\Sigma)$ is the HDHF chain complex of the $\kappa$-tuple of cotangent fibers $T_{\mathbf{q}}^*\Sigma$.
We refer the reader to \cite{colin2020applications} or \cite[Section 2.1]{honda2022jems} for the detailed definition of HDHF, and we omit most details here. 

Given two $\kappa$-tuples of disjoint exact Lagrangians $L_i=L_{i1}\sqcup\dots\sqcup L_{i\kappa}\subset T^*\Sigma$, $i=0,1$, whose components are mutually transverse, let $CF(L_0,L_1)$ be the free abelian group generated by all $\boldsymbol{y}=\{y_{1},\dots,y_{\kappa}\}$ where $y_{j}\in L_{0j}\cap L_{1\sigma(j)}$ and $\sigma$ is some permutation of $\{1,\dots,\kappa\}$. 
The coefficient ring is set to be $\mathbb{Z}\llbracket\hbar\rrbracket$. 

Let $g$ be a Riemannian metric on $\Sigma$ and $|\cdot|$ be the induced norm on $T^*\Sigma$. Choose a time-dependent ``quadratic at infinity" Hamiltonian
\begin{gather}
    \label{eq-H}
    H_V\colon [0,1]\times T^*{M}\to\mathbb{R},\quad H_V(t,q,p)=\frac{1}{2}|p|^2+V(t,q),
\end{gather}
where $t\in[0,1]$, $q\in \Sigma$, $p\in T^*_q \Sigma$, and $V$ is some perturbation term with small $W^{1,2}$-norm. The Hamiltonian vector field $X_{H_V}$ with respect to the canonical symplectic form $\omega=dq\wedge dp$ is then given by $i_{X_{H_V}}\omega=dH_V$. Let $\phi^t_{H_V}$ be the time-$t$ flow of $X_{H_V}$. 

By choosing $g$ and $V$ generically, we can guarantee that all Hamiltonian chords of $\phi^t_{H_V}$ between the cotangent fibers $\{T_{q_1}^*{\Sigma},\dots,T_{q_\kappa}^*{\Sigma}\}$ are nondegenerate and of degree 0. 
In each homotopy class, we arrange $\alpha$ such that all Hamiltonian chords of $\phi^t_{H_V}$ from the cotangent fibers $\{T_{q_1}^*{\Sigma},\dots,T_{q_\kappa}^*{\Sigma}\}$ to $T^*_\alpha\Sigma$ are of degree 0.
This is always possible when the genus of $\Sigma$ is greater or equal to 1.
We refer to \cite[Definition 3.1, Proposition 3.3]{abbondandolo2008homology} for details of the gradings.

We define $CW(T_{\mathbf{q}}^*\Sigma,T_{\alpha}^*\Sigma)\coloneqq CF(\phi^1_{H_V}(T_{\mathbf{q}}^*\Sigma),T_{\alpha}^*\Sigma)\otimes \mathbb{Z}[c^{\pm1}]$, i.e., the generators are $\kappa$-tuples of time-1 Hamiltonian chords that start from $T_\mathbf{q}^*\Sigma$ and end on $T_\alpha^*\Sigma$.
Similarly, we define the wrapped HDHF complex $CW(T_{\mathbf{q}}^*\Sigma)\coloneqq CF(\phi^1_{H_V}(T_{\mathbf{q}}^*\Sigma),T_{\mathbf{q}}^*\Sigma)\otimes \mathbb{Z}[c^{\pm1}]$.

Let $D$ be the unit disk in $\mathbb{C}$ and $D_{m}=D-\{w_0,\dots,w_m\}$, where $w_i\in\partial D$ are boundary marked points arranged counterclockwise. Let $\partial_i D_{m}$ be the boundary arc from $w_i$ to $w_{i+1}$. 
Let $\mathcal{A}_m$ be the moduli space of $D_{m}$ modulo automorphisms; we choose representatives $D_{m}$ of equivalence classes of $\mathcal{A}_m$ in a smooth manner and abuse notation by writing $D_{m}\in \mathcal{A}_m$.
We call $D_{m}$ the ``$A_\infty$ base direction''.

\begin{figure}[ht]
    \centering
    \includegraphics[width=5cm]{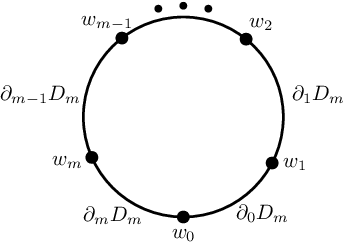}
    \caption{The $A_\infty$ base direction $D_{m}$.}
    \label{fig-base}
\end{figure}

The full ambient symplectic manifold is $(D_{m}\times T^*\Sigma,\,\Omega_{m}=\omega_{m}+\omega)$, where $\omega_{m}$ is an area form on $D_{m}$ which restricts to $ds_i\wedge dt_i$ on the strip-like ends $e_i\simeq [0,\infty)_{s_i}\times[0,1]_{t_i}$ around $p_i$ for $i=1,\dots,m$ and $e_i\simeq (-\infty,0]_{s_i}\times[0,1]_{t_i}$ around $p_0$. 
Let $\pi_{T^*\Sigma}\colon D_{m}\times T^*\Sigma\to T^*\Sigma$ be the projection to $T^*\Sigma$ and 
$\pi_{D_{m}}$ be the symplectic fibration $\pi_{D_{m}}\colon (D_{m}\times T^*\Sigma,\Omega_{m})\to (D_{m},\omega_{m})$.

We write $T^*\Sigma=DT^*\Sigma\cup [0,\infty)_s\times \partial DT^*\Sigma$ where $DT^*\Sigma$ is the unit cotangent bundle of $\Sigma$ with respect to the metric $|\cdot|$.
Let $\mathcal{J}_{T^*\Sigma}$ be the set of $\omega$-compatible almost complex structures $J_{T^*\Sigma}$ on $(T^*\Sigma,\omega)$ that are asymptotic to an almost complex structure on the symplectization end $[0,\infty)_s\times \partial DT^*\Sigma$ that takes $\partial_s$ to the Reeb vector field of $qdp|_{\partial DT^*\Sigma}$, takes $\ker qdp|_{\partial DT^*\Sigma}$ to itself, and is compatible with $\omega|_{\partial DT^*\Sigma}$.

Consider a smooth assignment 
$D_{m}\mapsto J_{D_{m}}$, where $D_{m}\in \mathcal{A}_{m}$, such that:
\begin{enumerate}
    \item[(J1)] on each fiber $\pi_{D_{m}}^{-1}(p)=\{p\}\times T^*\Sigma$, $J_{D_{m}}$ restricts to an element of $\mathcal{J}_{T^*\Sigma}$;
    \item[(J2)] $J_{D_{m}}$ projects holomorphically onto $D_{m}$;
    \item[(J3)] over each strip-like end $[0,\infty)_{s_i}\times[0,1]_{t_i}$, for $s_i$ sufficiently positive, or $(-\infty,0]_{s_0}\times[0,1]_{t_0}$, for $s_0$ sufficiently negative, $J_{D_{m}}$ is invariant in the $s_i$-direction and takes $\partial_{s_i}$ to $\partial_{t_i}$; when $m=1$, $J_{D_1}$ is invariant under $\mathbb{R}$-translation of the base and takes $\partial_{s_i}$ to $\partial_{t_i}$.
    \item[(J4)] consistent with the boundary strata.
\end{enumerate}

We abuse the notation and call such a smooth assignment $\{D_m\mapsto J_{D_m},\,m\geq1\}$ {\em a choice of perturbation data}.

\begin{lemma}\label{lemma:consistentperturbation}
    Suppose $J_{D_m}$ has been chosen for $m\leq m_0$, and satisfies the above conditions, then it can be extended to a choice of perturbation data for all $m\geq1$.
\end{lemma}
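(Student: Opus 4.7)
The plan is to argue by induction on $m\geq m_0+1$, extending the perturbation data one level at a time. The base case is given by hypothesis. For the inductive step, suppose $J_{D_k}$ has been chosen for all $k<m$ satisfying (J1)--(J4). I would construct $J_{D_m}$ in two stages: first near the boundary $\partial\mathcal{A}_m$, then extend over the interior.

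For the first stage, recall that $\mathcal{A}_m$ is (diffeomorphic to) the Stasheff associahedron, whose codimension-one boundary strata are images of gluing maps from products $\mathcal{A}_{m_1}\times\mathcal{A}_{m_2}$ with $m_1+m_2=m+1$ and $m_1,m_2<m$. On each such stratum the consistency condition (J4) forces $J_{D_m}$ to agree with the pair $(J_{D_{m_1}},J_{D_{m_2}})$ glued along a common strip-like end, and this data is already supplied by the inductive hypothesis. Choose a collar neighborhood $\partial\mathcal{A}_m\times[0,\epsilon)$ and use the gluing parameter, together with the $s_i$-translation invariance in (J3), to transport the boundary data into the collar so as to yield an almost complex structure $J_{D_m}^{\partial}$ on a neighborhood of $\partial\mathcal{A}_m$ that still satisfies (J1)--(J3) fiberwise. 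The compatibility across corners of $\partial\mathcal{A}_m$, where two independent breakings meet, is automatic because the gluings happen at disjoint strip-like ends and the previously chosen $J_{D_k}$ are already $s_i$-invariant there by (J3).

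For the second stage, I extend $J_{D_m}^{\partial}$ to a smooth assignment $D_m\mapsto J_{D_m}$ over all of $\mathcal{A}_m$. This is possible because the space of almost complex structures on $T^*\Sigma$ satisfying (J1) (that is, $\omega$-compatible and cylindrical at the symplectization end) is non-empty and contractible, so the space of smooth assignments obeying (J1)--(J3) on a given $D_m\in\mathcal{A}_m$ is also contractible. Therefore an extension exists by a standard partition-of-unity argument on $\mathcal{A}_m$, using a smooth cutoff supported in the collar to interpolate between $J_{D_m}^{\partial}$ and any fixed background choice satisfying (J1)--(J3) in the interior. By construction the resulting $J_{D_m}$ satisfies all four conditions, which completes the induction.

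The main technical point is verifying that the collar data $J_{D_m}^{\partial}$ is well-defined and smooth across corners of $\partial\mathcal{A}_m$; this is the only place where the inductive hypothesis is genuinely used, and it relies on the $s_i$-invariance in (J3) so that the gluing on distinct strip-like ends truly commutes. Once this is in place, the interior extension is a soft argument from the contractibility of the space of admissible almost complex structures, and no further obstruction arises.
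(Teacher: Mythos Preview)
Your proposal is correct and is precisely the standard inductive construction the paper has in mind; the paper's own proof simply says ``One can inductively construct $\{J_{D_m},\,m\geq1\}$'' and refers the reader to Seidel's book, so you have spelled out exactly what that reference contains.
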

\begin{proof}
    One can inductively construct $\{J_{D_m},\,m\geq1\}$. We refer the reader to standard contexts such as \cite{seidel2008fukaya}.
\end{proof}

Let $\mathcal{R}(\boldsymbol{x}_{1},\ldots,\boldsymbol{x}_{m},\boldsymbol{x}_{0})$ denote the moduli space of maps
\begin{equation*}
    u\colon (\dot F,j)\to(D_{m}\times T^*\Sigma,J_{D_{m}}),
\end{equation*}
where $(F,j)$ is a compact Riemann surface with boundary, $\mathbf{w}_{0},\dots,\mathbf{w}_{m}$ are disjoint $\kappa$-tuples of boundary marked points of $F$, $\dot F=F\setminus\cup_i \mathbf{w}_i$, and $D_{m}\in \mathcal{A}_{m}$, so that $u$ satisfies
\begin{align}
    \left\{
        \begin{array}{ll}
            \text{$du\circ j=J_{D_{m}}\circ du$;}\\
            \text{$\pi_{T^*\Sigma}\circ u(z)\in \phi^{m-i}_{H_V}(T^*_{\mathbf{q}}\Sigma)$ if $\pi_{D_{m}}\circ u(z)\in\partial_{i}D_{m}$, $i=0,\dots,m$;}\\
            \text{$\pi_{T^*\Sigma}\circ u$ tends to $\boldsymbol{x}_i$ as $s_i\to+\infty$ for $i=1,\dots,m$;}\\
            \text{$\pi_{T^*\Sigma}\circ u$ tends to $\boldsymbol{x}_0$ as $s_0\to-\infty$;}\\
            \text{$\pi_{D_{m}}\circ u$ is a $\kappa$-fold branched cover of $D_{m}$.}
        \end{array}
    \right.
\end{align}
Transversality can be achieved using a standard Sard-Smale argument,
which ensures that the $0$-dimensional and $1$-dimensional moduli spaces are smooth manifolds and admit natural compactifications.
By counting rigid elements in the $0$-dimensional moduli spaces $\mathcal{R}^{\op{ind}=0}(\boldsymbol x_{1},\ldots,\boldsymbol x_{m},\boldsymbol x_{0})$,
we define 
\begin{align}
    \mu^m\colon &CW(T^*_\mathbf{q}\Sigma)\otimes\dots\otimes CW(T^*_\mathbf{q}\Sigma)\to CW(T^*_\mathbf{q}\Sigma), \label{eq-Ainfty} \\
    &\boldsymbol{x}_{1} \otimes \cdots \otimes \boldsymbol{x}_{m} \mapsto \sum_{\substack{\boldsymbol{x}_{0}\\u \in \mathcal{R}(\boldsymbol{x}_{1},\ldots,\boldsymbol{x}_{m},\boldsymbol{x}_{0})}} (-1)^{{\natural(u)}}\cdot c^{2\langle u,\star\rangle}\cdot \hbar^{\kappa-\chi(u)}\cdot[\boldsymbol{x}_{0}]. \label{eq-Ainftyformula}
\end{align}
for $m\geq1$.
By Lemma \ref{lemma:consistentperturbation} and a standard analysis of the boundary strata of $1$-dimensional moduli spaces, 
we may show that the maps $\mu^m$ satisfy the $A_{\infty}$ equations.
See \cite[Section 2.2]{honda2022jems} for more details.

\begin{remark}
    There is some subtlety when defining $A_\infty$-operations for wrapped HDHF. 
    We omit the details and refer the reader to \cite[Section 3]{abouzaid2010geometric} or \cite[Section 2.2]{honda2022jems} for the usual rescaling argument.
\end{remark}

\begin{remark}
    The convention for the $A_{\infty}$ multiplication \eqref{eq-Ainftyformula} is that the order goes from the left to the right,
    which is opposite to the convention in, e.g., \cites{abouzaid2010geometric, seidel2008fukaya, honda2022jems},
    but rather follows \cites{ganatra2020covariantly,ganatra2024sectorial}.
    We use the current convention, since it seems to better fit the topological description from the point of view of concatenation of paths.
\end{remark}


Next we proceed to construct a left $A_{\infty}$ $CW(T^*_{\mathbf{q}}\Sigma)$-module structure on the chain complex $CW(T^*_{\mathbf{q}}\Sigma, T^{*}_{\alpha}\Sigma)$.
Given two $\kappa$-tuples of chords $\boldsymbol{y}_{0},\boldsymbol{y}_1$ starting from $T^{*}_{\mathbf{q}}\Sigma$ ending on $T^{*}_{\alpha}\Sigma$,
and $\boldsymbol{x}_1,\dots,\boldsymbol{x}_{m-1}$ $\kappa$-tuples of chords starting from and ending on $T^{*}_{\mathbf{q}}\Sigma$, $m\geq1$, we define $\mathcal{M}(\boldsymbol{x}_{1},\dots,\boldsymbol{x}_{m-1},\boldsymbol{y}_1,\boldsymbol{y}_0)$ to be the moduli space of maps
\begin{equation*}
    u\colon (\dot F,j)\to(D_{m}\times T^*\Sigma,J_{D_{m}}),
\end{equation*}
where everything is the same as the setup of $\mathcal{R}(\boldsymbol{x}_{1},\ldots,\boldsymbol{x}_{m},\boldsymbol{x}_{0})$,
except that $u$ satisfies the following new set of conditions:
\begin{align}
    \label{floer-condition}
    \left\{
        \begin{array}{ll}
            \text{$du\circ j=J_{D_{m}}\circ du$;}\\
            \text{$\pi_{T^*\Sigma}\circ u(z)\in \phi^{m-i}_{H_V}(T^*_{\mathbf{q}}\Sigma)$ if $\pi_{D_{m}}\circ u(z)\in\partial_{i}D_{m}$, $i=0,\dots,m-1$;}\\
            \text{$\pi_{T^*\Sigma}\circ u(z)\in T^*_\alpha\Sigma$ if $\pi_{D_{m}}\circ u(z)\in\partial_{m}D_{m}$;}\\
            \text{$\pi_{T^*\Sigma}\circ u$ tends to $\boldsymbol{x}_i$ as $s_i\to+\infty$ for $i=1,\dots,m-1$;}\\
            \text{$\pi_{T^*\Sigma}\circ u$ tends to $\boldsymbol{y}_1$ as $s_m\to+\infty$;}\\
            \text{$\pi_{T^*\Sigma}\circ u$ tends to $\boldsymbol{y}_0$ as $s_0\to-\infty$;}\\
            \text{$\pi_{D_{m}}\circ u$ is a $\kappa$-fold branched cover of $D_{m}$.}
        \end{array}
    \right.
\end{align}
With the identification of each strip-like end $e_i$, $i=1,\dots,m$, with $[0,\infty)_{s_i}\times[0,1]_{t_i}$, the fourth condition means that $u$ maps the neighborhoods of the punctures of $\mathbf{w}_i$ asymptotically to the Reeb chords $[0,1]_{t_i}\times \boldsymbol{x}_i$ as $s_i\to +\infty$. The fifth and sixth conditions are similar. 
We also require that $\pi_{D_{m}}\circ u$ maps $\mathbf{w}_i$ to $w_i$.

\begin{lemma}
\label{lemma-ind}
    Fixing a generic choice of perturbation data, $\mathcal{M}(\boldsymbol{x}_{1},\dots,\boldsymbol{x}_{m-1},\boldsymbol{y}_1,\boldsymbol{y}_0)$ is a smooth manifold of dimension $m-2$.
\end{lemma}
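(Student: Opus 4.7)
The structure of the argument parallels the transversality and index analysis already carried out for the moduli spaces $\mathcal{R}(\boldsymbol{x}_1,\dots,\boldsymbol{x}_m,\boldsymbol{x}_0)$ used to define the $A_\infty$ operations $\mu^m$ on $CW(T^*_{\mathbf{q}}\Sigma)$; the only novelty is that the boundary arc $\partial_m D_m$ now lies on the conormal $T^*_\alpha \Sigma$ rather than on a wrapped cotangent-fiber tuple. The plan is to split the proof into (i) an index computation giving expected dimension $m-2$ and (ii) a Sard--Smale transversality argument realizing this expected dimension on a generic choice of perturbation data.

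For the index, the base moduli space $\mathcal{A}_m$ has real dimension $m-2$. By the arrangements preceding the moduli problem \eqref{floer-condition}, every asymptotic chord $\boldsymbol{x}_i$, $\boldsymbol{y}_1$, and $\boldsymbol{y}_0$ is nondegenerate and of Maslov grading zero, whether between two cotangent-fiber tuples or between a cotangent-fiber tuple and the conormal $T^*_\alpha\Sigma$; this relies on the genus assumption on $\Sigma$ together with \cite[Definition 3.1, Proposition 3.3]{abbondandolo2008homology}. Because $\pi_{D_m}\circ u$ is a $\kappa$-fold branched cover, the linearized operator splits into a base contribution (of dimension $\dim\mathcal{A}_m=m-2$) and a fiberwise Cauchy--Riemann operator for $\pi_{T^*\Sigma}\circ u$, exactly as in \cite[Section 2.2]{honda2022jems}; the vertical operator has Fredholm index equal to the signed sum of asymptotic degrees, which vanishes. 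Summing gives expected dimension $m-2$.

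For transversality, I would perturb $J_{D_m}$ within the class defined by (J1)--(J4) by varying its vertical component; condition (J2) rigidifies the base component but leaves ample freedom in the fibers $T^*\Sigma$. By Lemma \ref{lemma:consistentperturbation}, such perturbations can be chosen consistently for all $m$. A standard universal-moduli-space argument combined with the Sard--Smale theorem then shows that for generic perturbation data the linearized $\bar\partial$-operator is surjective at every element of $\mathcal{M}(\boldsymbol{x}_1,\dots,\boldsymbol{x}_{m-1},\boldsymbol{y}_1,\boldsymbol{y}_0)$, so the moduli space is a smooth manifold of the expected dimension $m-2$.

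The main technical point I expect to have to address is somewhere-injectivity in the presence of the $\kappa$-fold branched cover condition and the new Lagrangian boundary condition on $\partial_m D_m$: multiply covered fiber components or branch components could a priori obstruct surjectivity of the universal linearization. This is handled in the HDHF setting by exploiting that the Lagrangians $\phi^j_{H_V}(T^*_\mathbf{q}\Sigma)$ and $T^*_\alpha\Sigma$ intersect transversely over $\Sigma$ for generic $g$ and $V$, together with the fact that (J2) forces any component of $u$ projecting nontrivially to the base to already be somewhere-injective upstairs; the remaining constant-base components can be controlled by the same rescaling and strip-like-end arguments as in \cite[Section 2.2]{honda2022jems}, with only cosmetic modifications to accommodate the conormal boundary condition.
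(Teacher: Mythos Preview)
Your proposal is correct and follows essentially the same approach as the paper: the paper's proof simply cites \cite[Theorem 2.3]{honda2022jems} for the general dimension formula $|\boldsymbol{y}_0|-|\boldsymbol{x}_1|-\dots-|\boldsymbol{x}_{m-1}|-|\boldsymbol{y}_1|+m-2$ and the transversality argument, and then invokes the degree-zero condition on all generators to reduce this to $m-2$. Your write-up is a more explicit unpacking of exactly what that citation contains, including the base/fiber index splitting and the Sard--Smale argument, so the two proofs coincide in substance.
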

\begin{proof}
    By the same proof of \cite[Theorem 2.3]{honda2022jems}, $\mathcal{M}(\boldsymbol{x}_{1},\dots,\boldsymbol{x}_{m-1},\boldsymbol{y}_1,\boldsymbol{y}_0)$ is a smooth manifold of dimension $|\boldsymbol{y}_0|-|\boldsymbol{x}_1|-\dots-|\boldsymbol{x}_{m-1}|-|\boldsymbol{y}_1|+m-2$ for a generic choice of $J_{D_m}$.
    By assumption of $\mathbf{q}$ and $\alpha$, all generators of $CW(T_{\mathbf{q}}^*\Sigma)$ and $CW(T_{\mathbf{q}}^*\Sigma,T_{\alpha}^*\Sigma)$ are of degree 0, hence $|\boldsymbol{y}_0|=|\boldsymbol{y}_1|=|\boldsymbol{x}_1|=\dots=|\boldsymbol{x}_{m-1}|=0$.
\end{proof}

For the well-definedness of the coefficient ring, we have to consider braid skein algebras on $T^*_\alpha\Sigma$, and a fixed $\kappa$-tuple of basepoints on $T^*_\alpha\Sigma$ is necessary. 
Given $\mathbf{p}_0,\mathbf{p}_1\in\mathrm{UConf}_{\kappa}(T^*_\alpha\Sigma)$, consider the space $\mathrm{Br}_{\kappa}(T^*_\alpha\Sigma,\mathbf{p}_0,\mathbf{p}_1)$ which consists of braids that start from $\mathbf{p}_0$ and end on $\mathbf{p}_1$.
We define $\mathrm{BSk}^\circ_\kappa(T^*_\alpha\Sigma,\mathbf{p}_0,\mathbf{p}_1)$ as the quotient of the free module $\mathbb{Z}[s^{\pm1}][\mathrm{Br}_{\kappa}(T^*_\alpha\Sigma,\mathbf{p}_0,\mathbf{p}_0)]$ by the local relation (\ref{eq-skein'}).


\begin{definition}
    We define $CW(T^*\Sigma,\mathbf{q},\alpha,\mathbf{p}_0)$ to be the free $\mathbb{Z}\llbracket s^{\pm1}\rrbracket[c^{\pm1}]$-module generated by $\{([\gamma],\boldsymbol{y})\}$, for all $[\gamma]\in\op{BSk}^\circ_\kappa(T^*_\alpha\Sigma,\mathbf{p}_0,\boldsymbol{y}(1))$ and $\boldsymbol{y}\in CW(T_{\mathbf{q}}^*\Sigma,T_{\alpha}^*\Sigma)$, such that $\mathbb{Z}\llbracket s^{\pm1}\rrbracket$ naturally acts on the first factor.
    For notation convenience, we write $[\gamma,\boldsymbol{y}]\coloneqq ([\gamma],\boldsymbol{y})$.
\end{definition}

Fix a parametrization of the arc $\partial_m D_m$ from $w_m$ to $w_0$ by $\tau\colon [0,1] \to \partial_m D_m$.
Generically for all $u\in\mathcal{M}(\boldsymbol{x}_{1},\dots,\boldsymbol{x}_{m-1},\boldsymbol{y}_1,\boldsymbol{y}_0)$, $(\pi_{T^*\Sigma}\circ u)\circ(\pi_{D_m}\circ u)^{-1}\circ\tau(t)$ consists of $\kappa$ distinct points on $T^*_\alpha\Sigma$ for each $t\in[0,1]$ and hence gives a path in $\mathrm{UConf}_{\kappa}(T^*_\alpha\Sigma)$:
\begin{equation}
\label{eq-evaluation}
    \mathcal{E}_\alpha(u)\colon [0,1] \to \mathrm{UConf}_{\kappa}(T^*_\alpha\Sigma), \quad t\mapsto (\pi_{T^*\Sigma}\circ u)\circ(\pi_{D_m}\circ u)^{-1}\circ\tau(t),
\end{equation}
where $\mathcal{E}_\alpha(u)(0)=\boldsymbol{y}_1(1)$, $\mathcal{E}_\alpha(u)(1)=\boldsymbol{y}_0(1)$.


Consider the map
\begin{gather}
    \mu_{\alpha}^m\colon CW(T^*_\mathbf{q}\Sigma)^{\otimes m-1}\otimes CW(T^*\Sigma,\mathbf{q},\alpha,\mathbf{p}_0)\to CW(T^*\Sigma,\mathbf{q},\alpha,\mathbf{p}_0), \label{modulemap}\\ 
    \boldsymbol{x}_{1}\otimes\dots\otimes \boldsymbol{x}_{m-1}\otimes[\gamma,\boldsymbol{y}_{1}] \mapsto \sum_{\substack{\boldsymbol{y},u\in\\\mathcal{M}^{\op{ind}=0}(\boldsymbol{x}_1,\dots,\boldsymbol{x}_{m-1},\boldsymbol{y}_1,\boldsymbol{y})}} (-1)^{{\natural(u)}}\cdot c^{2\langle u,\star\rangle}\cdot \hbar^{\kappa-\chi(u)}\cdot[\gamma\cdot\mathcal{E}_\alpha(u),\boldsymbol{y}]. \label{modulemapformula}
\end{gather}
Here $\langle u,\star\rangle$ is the signed intersection number between $\pi_{T^*\Sigma}\circ u$ and $T^*_\star\Sigma$ in $T^*\Sigma$. See \cite[Section 5]{honda2022jems} for the details about the $c$-parameter, which corresponds to the marked point relation (\ref{eq-c}).
In the rest of this paper, we omit the discussion of the $c$-parameter since there is no extra behavior than that in \cite{honda2022jems}.

\begin{lemma}
     The maps $\mu_{\alpha}^{m}$ \eqref{modulemap} defined by \eqref{modulemapformula} endow $CW(T^*\Sigma,\mathbf{q},\alpha,\mathbf{p}_0)$ the structure of a left $A_{\infty}$-module over $CW(T^*_\mathbf{q}\Sigma)$.
     Moreover, the differential and higher order module structure maps vanish,
     so that $CW(T^*\Sigma,\mathbf{q},\alpha,\mathbf{p}_0)$ is a usual left module over the graded algebra $CW(T^*_\mathbf{q}\Sigma)$,
     which is concentrated in degree zero.
\end{lemma}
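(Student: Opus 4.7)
The plan is to first establish the full $A_\infty$-module equations by a standard boundary analysis of 1-dimensional moduli spaces, and then exploit the dimension formula from Lemma \ref{lemma-ind} to collapse all higher operations to zero, leaving only $\mu_\alpha^2$ as a genuine associative module action in degree zero.

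First, I would verify the $A_\infty$-module equations by looking at the compactification of the 1-dimensional moduli space $\mathcal{M}^{\op{ind}=1}(\boldsymbol{x}_1,\ldots,\boldsymbol{x}_{m-1},\boldsymbol{y}_1,\boldsymbol{y}_0)$ for varying $m$. Its codimension-one boundary strata come from SFT-type strip breakings of two flavors: either a strip breaks off among the algebra inputs $\boldsymbol{x}_i$, contributing a composition of some $\mu^k$ followed by $\mu_\alpha^\ell$ with $k+\ell = m+1$; or a strip breaks off on the module side (the strip-like end near $\boldsymbol{y}_1$ or $\boldsymbol{y}_0$), contributing an iterated composition $\mu_\alpha^i \circ \mu_\alpha^j$. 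The essential compatibility to check is that the evaluation $\mathcal{E}_\alpha$ of \eqref{eq-evaluation} is multiplicative under such breakings, i.e.~if $u$ degenerates into $u'$ and $u''$ then the class $\mathcal{E}_\alpha(u)$ agrees, in $\op{BSk}^\circ_\kappa(T^*_\alpha\Sigma,\mathbf{p}_0,\boldsymbol{y}_0(1))$, with the concatenation of $\mathcal{E}_\alpha(u')$ and $\mathcal{E}_\alpha(u'')$; this is immediate from the gluing description near the broken strip and the continuity of $\pi_{T^*\Sigma}\circ u$ along $\partial_m D_m$. Combined with the standard sign/orientation argument (parallel to \cite{honda2022jems}), the sum of the two kinds of boundary contributions vanishes, yielding the $A_\infty$-module relations.

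Second, I would invoke Lemma \ref{lemma-ind}: since our choice of $g$, $V$, and $\alpha$ places all generators of $CW(T_\mathbf{q}^*\Sigma)$ and $CW(T_\mathbf{q}^*\Sigma,T_\alpha^*\Sigma)$ in degree $0$, the moduli space $\mathcal{M}(\boldsymbol{x}_1,\ldots,\boldsymbol{x}_{m-1},\boldsymbol{y}_1,\boldsymbol{y}_0)$ has dimension exactly $m-2$. Hence $\mu_\alpha^1$ counts an empty moduli space (expected dimension $-1$), $\mu_\alpha^2$ is a count over a 0-dimensional moduli space, and for every $m\geq 3$ the moduli space is positive-dimensional with empty rigid locus, so $\mu_\alpha^m = 0$. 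Together with the analogous vanishing $\mu^m = 0$ for $m\neq 2$ on $CW(T_\mathbf{q}^*\Sigma)$ established in \cite{honda2022jems}, the $A_\infty$-module relations then reduce to the single surviving identity at $n=3$, namely associativity $\mu_\alpha^2(\mu^2(x_1,x_2),y) = \mu_\alpha^2(x_1,\mu_\alpha^2(x_2,y))$. Since all generators lie in degree $0$, the resulting module is concentrated in degree $0$.

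The main obstacle I anticipate is the careful verification of orientations and transversality at the codimension-one boundary strata when both kinds of breakings (algebra-side and module-side) are allowed, but this is entirely parallel to the $A_\infty$-algebra analysis in \cite[Section 2.2]{honda2022jems}, the only novelty being the extra Lagrangian boundary condition along $T^*_\alpha\Sigma$ on $\partial_m D_m$ and the bookkeeping of the braid class via $\mathcal{E}_\alpha$. The consistency in Lemma \ref{lemma:consistentperturbation} ensures that the perturbation data can be extended to arrange this uniformly.
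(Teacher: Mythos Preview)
Your overall strategy is the same as the paper's---use the dimension formula of Lemma~\ref{lemma-ind} to reduce to $m=3$ and then analyze the boundary of the $1$-dimensional moduli space $\overline{\mathcal{M}}(\boldsymbol{x}_1,\boldsymbol{x}_2,\boldsymbol{y}_1,\boldsymbol{y}_0)$---but there is a genuine gap in your boundary analysis.

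You list only two types of codimension-one degenerations: algebra-side strip breaking and module-side strip breaking. In the HDHF setting, however, the arc $\partial_m D_m$ carries the single Lagrangian $T^*_\alpha\Sigma$, and the $\kappa$ sheets of the domain all map their boundaries there. In a generic $1$-parameter family a branch point of $\pi_{D_m}\circ u$ can approach $\partial_m D_m$, i.e.\ two boundary points on distinct sheets collide on $T^*_\alpha\Sigma$, producing a \emph{nodal} curve. This is a third boundary stratum $\partial_\alpha\overline{\mathcal{M}}(\boldsymbol{x}_1,\boldsymbol{x}_2,\boldsymbol{y}_1,\boldsymbol{y}_0)$ that is not an SFT-type strip breaking and is invisible in the usual wrapped Fukaya category; it is the whole reason the module is defined with coefficients in the skein module $\op{BSk}^\circ_\kappa(T^*_\alpha\Sigma,\mathbf{p}_0,-)$ rather than the group algebra of $\op{Br}_\kappa$. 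The paper handles this piece by invoking the Ekholm--Shende symplectic skein mechanism: such a nodal curve sits at the end of a one-parameter family with $\chi$ jumping by one, and the HOMFLY relation (\ref{eq-skein'}) built into the coefficient module ensures these contributions cancel in pairs. Without this argument the associativity relation $\mu_\alpha^2(\mu^2(\boldsymbol{x}_1,\boldsymbol{x}_2),[\gamma,\boldsymbol{y}_1])=\mu_\alpha^2(\boldsymbol{x}_1,\mu_\alpha^2(\boldsymbol{x}_2,[\gamma,\boldsymbol{y}_1]))$ is not established.

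A secondary remark: the paper proceeds in the opposite order from you, first using Lemma~\ref{lemma-ind} to see that only $m=2,3$ are relevant and then directly analyzing $\partial\overline{\mathcal{M}}$ for $m=3$. Your route of proving the full $A_\infty$-relations first and then collapsing is harmless but unnecessary here, and it somewhat obscures the fact that the only nontrivial identity to verify is the single associativity equation above.
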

\begin{proof}
    By Lemma \ref{lemma-ind}, $\mathcal{M}(\boldsymbol{x}_{1},\dots,\boldsymbol{x}_{m-1},\boldsymbol{y}_1,\boldsymbol{y}_0)$ is of dimension 0 if and only if $m=2$, and is of dimension 1 if and only if $m=3$.
    By standard Gromov compactness results of wrapped Floer homology (see \cite[Lemma 2.6]{honda2022jems}), $\mathcal{M}(\boldsymbol{x}_{1},\dots,\boldsymbol{x}_{m-1},\boldsymbol{y}_1,\boldsymbol{y}_0)$ admits a compactification if $m\leq 3$. 
    The 0-dimensional boundary $\partial\overline{\mathcal{M}}(\boldsymbol{x}_1,\boldsymbol{x}_{2},\boldsymbol{y}_{1},\boldsymbol{y}_{0})$ consists of three pieces:
    \begin{enumerate}
        \item $\bigsqcup_{\boldsymbol{y}'}\mathcal{M}(\boldsymbol{x}_2,\boldsymbol{y}_{1},\boldsymbol{y}')\times\mathcal{M}(\boldsymbol{x}_1,\boldsymbol{y}',\boldsymbol{y}_0)$. This corresponds to the coefficient of $\boldsymbol{y}_0$ in $\mu_{\alpha}^2(\boldsymbol{x}_1,\mu_{\alpha}^2(\boldsymbol{x}_2,[\gamma,\boldsymbol{y}_1]))$;
        \item $\bigsqcup_{\boldsymbol{x}'}\mathcal{R}(\boldsymbol{x}_1,\boldsymbol{x}_{2},\boldsymbol{x}')\times\mathcal{M}(\boldsymbol{x}',\boldsymbol{y}_1,\boldsymbol{y}_0)$. Here $\mathcal{R}(\boldsymbol{x}_1,\boldsymbol{x}_{2},\boldsymbol{x}')$ is the moduli space involved in the definition of the $\mu^2$ map in (\ref{eq-Ainfty}). This corresponds to the coefficient of $\boldsymbol{y}_0$ in $\mu_{\alpha}^2(\mu^2(\boldsymbol{x}_1,\boldsymbol{x}_2),[\gamma,\boldsymbol{y}_1])$;
        \item the set $\partial_\alpha\overline{\mathcal{M}}(\boldsymbol{x}_1,\boldsymbol{x}_{2},\boldsymbol{y}_{1},\boldsymbol{y}_{0})$ of curves with a nodal degeneration along $T^*_\alpha\Sigma$. This corresponds to the symplectic skein relation discussed in \cite{ekholm2021skeins}. The outcome is that such degeneration causes no trouble in curve counting. We will discuss similar situations in details in the proof of Lemma \ref{lemma-homo}.
    \end{enumerate}

\end{proof}

We denote the homology of $CW(T^*\Sigma,\mathbf{q},\alpha,\mathbf{p}_0)$ (resp. $CW(T^*_\mathbf{q}\Sigma)$) by $HW(T^*\Sigma,\mathbf{q},\alpha,\mathbf{p}_0)$ (resp. $HW(T^*_\mathbf{q}\Sigma)$).

In this paper, we omit the details about the orientation of $\mathcal{M}(\boldsymbol{x}_{1},\dots,\boldsymbol{x}_{m-1},\boldsymbol{y}_1,\boldsymbol{y}_0)$ and related moduli spaces, and refer the reader to \cite[Section 4]{colin2020applications}.

\subsection{Symplectic skein relations at corners}
\label{section-local}
In this subsection, we give a symplectic explanation of the skein relation at corners in Equation (\ref{eq-corner}). 
Suppose $L_0,L_1$ are two Lagrangians in a symplectic 6-manifold $(X^6,\omega)$ such that $L_0,L_1$ have clean intersection at a 1-dimensional submanifold $K=L_0\cap L_1$. 

\begin{figure}[ht]
    \centering
    \includegraphics[width=15cm]{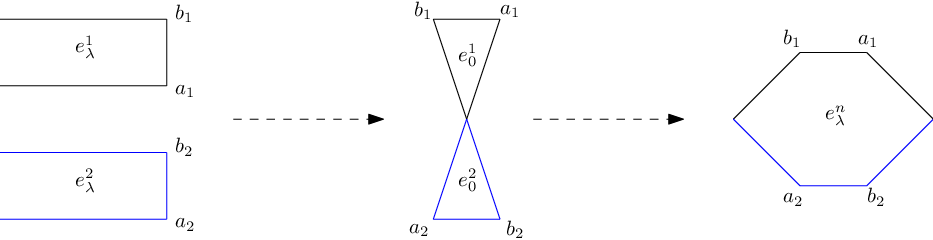}
    \caption{After resolving at $s_i=-\infty$, the strip like ends $e^1_\lambda\sqcup e^2_\lambda$ is replaced by $e^n_\lambda$.}
    \label{fig-strip-nodal}
\end{figure}

Let $S_\lambda$ be a 1-parameter family of Riemann surfaces with at least two boundary punctures $w_1,w_2$ for $\lambda\in[-\epsilon,\epsilon]$. Let $e^i_\lambda\simeq (-\infty,0]_{s_i}\times[0,1]_{t_i}$ be a smooth family of strip-like ends near $w_i$, $i=1,2$. 
See the left of Figure \ref{fig-strip-nodal}.
Suppose that $u_\lambda\colon S_\lambda\to X$ is a 1-parameter family of maps such that
\begin{align}
    \label{floer-condition-skein}
    \left\{
        \begin{array}{ll}
            \text{$du_\lambda\circ j=J\circ du_\lambda$;}\\
            \text{$u_\lambda(\{t_i=0\})\subset L_0$, $i=1,2$;}\\
            \text{$u_\lambda(\{t_i=1\})\subset L_1$, $i=1,2$;}\\
            \text{$u_\lambda$ tends to $K$ as $s_i\to-\infty$, $i=1,2$.}
        \end{array}
    \right.
\end{align}
Suppose $u_\lambda(\{s_1=-\infty\})=u_\lambda(\{s_2=-\infty\})$ at $\lambda=0$. 
Then there are two possibilities, where there is an additional 1-parameter family of curves in the upper case of Figure \ref{fig-corner-skein}.

\begin{figure}[ht]
    \centering
    \includegraphics[width=12cm]{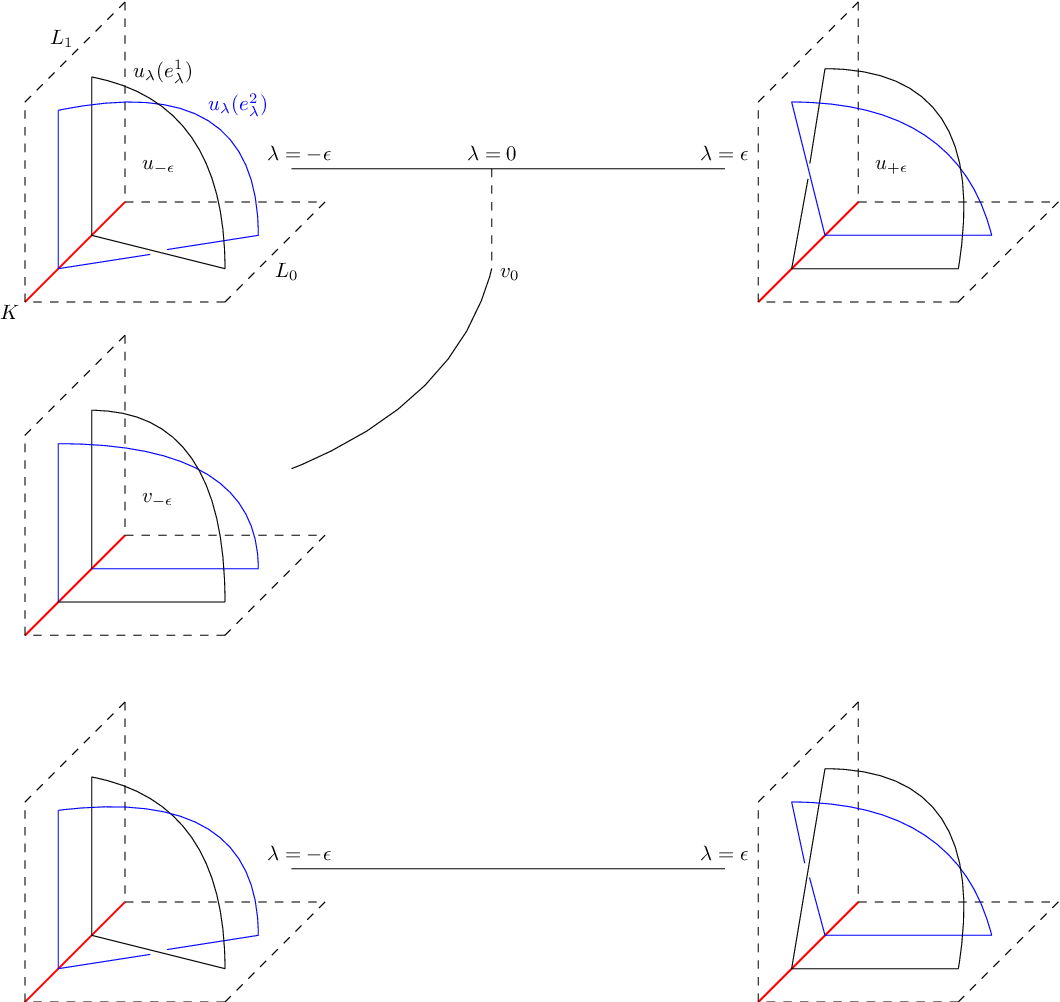}
    \caption{$u_\lambda$ restricted to $e^1_\lambda\sqcup e^2_\lambda$. In the upper case, after resolving $u_0$ we get a new family of nodal curves, where $e^1_\lambda\sqcup e^2_\lambda$ is replaced by $e^n_\lambda$.}
    \label{fig-corner-skein}
\end{figure}

We use a local model to explain Figure \ref{fig-corner-skein}. For a more rigorous proof, see the standard gluing techniques as in \cite{ekholm2021skeins}.

Consider $\mathbb{C}^3_{z_1,z_2,z_3}$, $z_i=x_i+jy_i$, $i=1,2,3$, with the standard complex structure $j$.
Let $\pi_{z_i}\colon \mathbb{C}^3\to\mathbb{C}_{z_i}$ be the projection map.

For the upper case of Figure \ref{fig-corner-skein}, we define the family of curves $u_\lambda\colon e^1_\lambda\sqcup e^2_\lambda\to\mathbb{C}^3$ in the split form as in Figure \ref{fig-corner-skein-model}, where $\pi_{z_1}\circ u_\lambda$ is of degree 2 over the triangle, $\pi_{z_2}\circ u_\lambda$ is of degree 2 near the origin, and $\pi_{z_3}\circ u_\lambda$ is a constant map restricted to $e^1_\lambda$ and $e^2_\lambda$.
As $\lambda$ varies, $\pi_{z_1}\circ u_\lambda$ and $\pi_{z_2}\circ u_\lambda$ remain unchanged. The constant maps $\pi_{z_3}\circ u_\lambda(e^1_\lambda)$ and $\pi_{z_3}\circ u_\lambda(e^2_\lambda)$ are moving in opposite directions as $\lambda$ increases and meet each other at $\lambda=0$. 
We resolve the domain $e^1_0\sqcup e^2_0$ to get $e^n_0$ as the middle of Figure \ref{fig-strip-nodal}, and correspondingly get a nodal curve $u^n_0\colon e^n_0\to\mathbb{C}^3$. 
There is then a family of curves $u^n_\lambda\colon e^n_\lambda\to\mathbb{C}^3$, $\lambda\geq0$, parametrized by the position of branched points of $\pi_{z_i}\circ u^n_\lambda$, $i=1,2$, which lies over the red arcs in Figure \ref{fig-corner-skein-model}. 
When $\lambda=0$, the branched points are at the origin in both $z_1,z_2$-directions.

\begin{figure}[ht] 
    \centering
    \includegraphics[width=11cm]{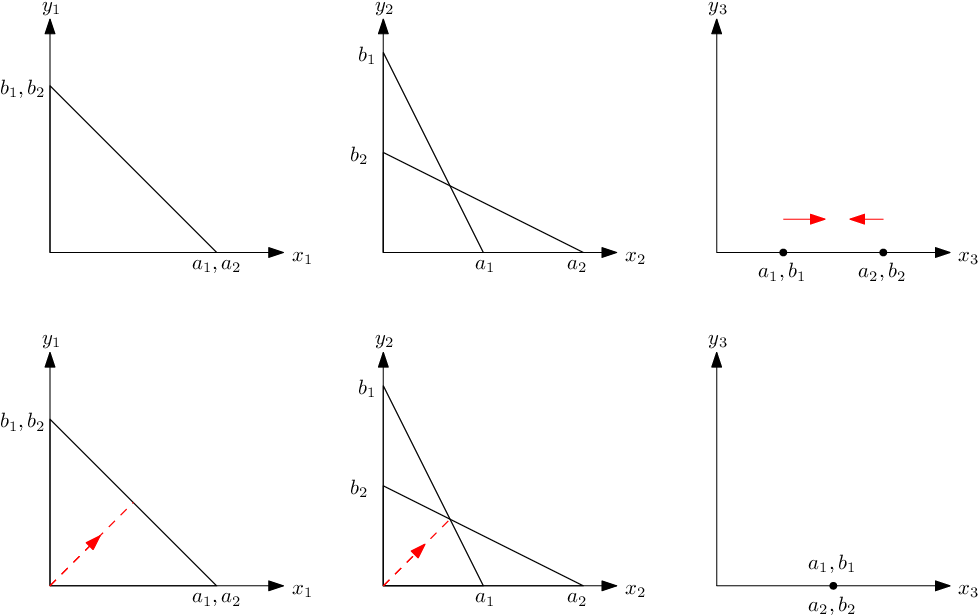}
    \caption{As the image of $e^1_\lambda$ and $e^2_\lambda$ are getting closer on the $z_3$-direction, a new family of curves $u^n_\lambda$, $\lambda\geq0$ occurs, where $u^n_0$ is the nodal curve resolved from $u_0$. The projection of $u^n_\lambda$ to $z_i$-direction, $i=1,2$, has a double branched point on the red dashed arc, which moves away from the origin as $\lambda$ increases.}
    \label{fig-corner-skein-model}
\end{figure}

For the lower case of Figure \ref{fig-corner-skein}, we define the family of curves $u_\lambda\colon e^1_\lambda\sqcup e^2_\lambda\to\mathbb{C}^3$ in the split form as in Figure \ref{fig-corner-skein-model-other}. 
As $\pi_{z_3}\circ u_\lambda(e_\lambda^1)$ and $\pi_{z_3}\circ u_\lambda(e_\lambda^2)$ meet each other at $\lambda=0$, we similarly resolve the domain $e^1_0\sqcup e^2_0$ to $e^n_0$ and get the nodal curve $u^n_0\colon e^n_0\to\mathbb{C}^3$.
However, the new family $u^n_\lambda$ cannot exist for $\lambda>0$: Observe that the projection $\pi_{z_1}\circ u^n_\lambda$ naturally defines an involution map on the hexagon $e^n_\lambda$ by the deck transformation of $\pi_{z_1}$. 
If $\pi_{z_2}\circ u^n_\lambda$ has a branched point away from the origin, e.g., on the red dashed arc in Figure \ref{fig-corner-skein-model-other}, then the boundary component from $a_1$ to $b_1$ is longer than that from $a_2$ to $b_2$ on the domain $e^n_\lambda$, which contradicts with the involution condition.

\begin{figure}[ht]
    \centering
    \includegraphics[width=11cm]{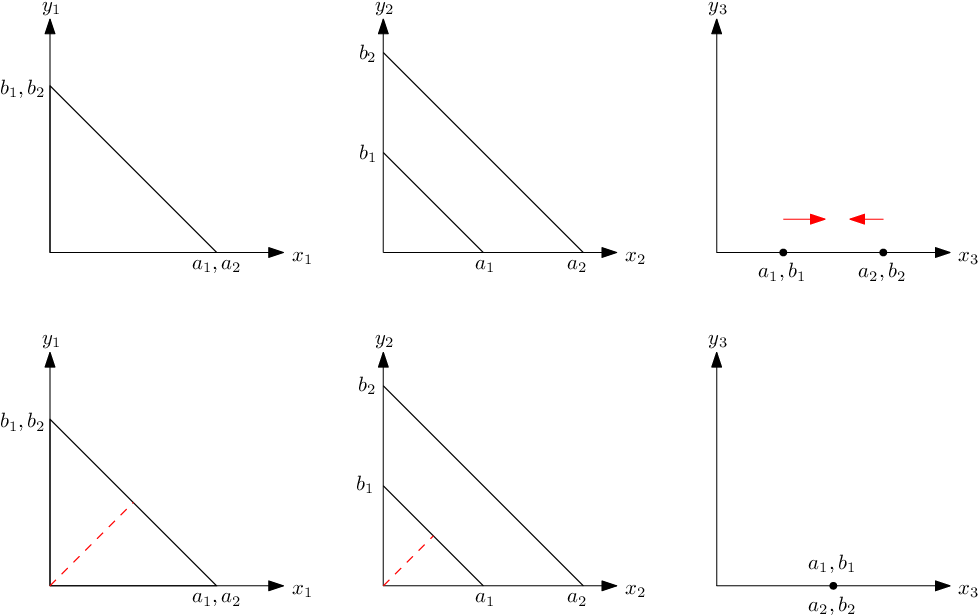}
    \caption{As the image of $e^1_\lambda$ and $e^2_\lambda$ are getting closer on the $z_3$-direction, there cannot be a new family of resolved curves.}
    \label{fig-corner-skein-model-other}
\end{figure}

\subsection{The isomorphism}



Recall that the main result of \cite{honda2022jems} is the following:

\begin{theorem}[{\cite[Theorem 1.4. Corollary 1.5]{honda2022jems}}]
    For $\Sigma \neq S^2$, the $A_{\infty}$-algebra $CW(T^{*}_{\mathbf{q}}\Sigma)$ is an ordinary algebra concentrated in degree $0$.
    There is an isomorphism
    \begin{equation}\label{dahaiso}
        \mathcal{F}: HW(T^{*}_{\mathbf{q}}\Sigma) \to \mathrm{BSk}_\kappa(\Sigma,\mathbf{q}) \otimes_{\Z[\hbar]}\Z\llbracket\hbar\rrbracket,
    \end{equation}
    where the latter is furthermore isomorphic to $\ddot{H}_{\kappa}|_{\hbar=s-s^{-1}}\otimes_{\Z[\hbar]}\Z\llbracket\hbar\rrbracket$.
\end{theorem}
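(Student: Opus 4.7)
The plan is to establish the isomorphism $\mathcal{F}$ in three stages: first arrange the grading so that the $A_\infty$-algebra collapses to an ordinary graded algebra in degree zero; then define $\mathcal{F}$ on generators via the projection-to-braid map; and finally verify that the product $\mu^2$ reproduces both skein relations defining $\mathrm{BSk}_\kappa(\Sigma, \mathbf{q})$.

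For the grading/collapse step I would choose the metric $g$ and perturbation $V$ so that every time-1 Hamiltonian chord of $H_V$ between components of $T^*_\mathbf{q}\Sigma$ has Maslov index zero; this is possible for $\Sigma \neq S^2$ since such $\Sigma$ admits a metric of nonpositive curvature, and the Abbondandolo--Schwarz index formula then forces all chords to sit in degree $0$. Since $\mu^m$ has degree $2 - m$, this forces $\mu^m = 0$ for $m \neq 2$ on purely degree-theoretic grounds. In particular $\mu^1 = 0$, so $HW(T^*_\mathbf{q}\Sigma) = CW(T^*_\mathbf{q}\Sigma)$ as graded modules, with a single nontrivial product $\mu^2$.

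Second, I would define $\mathcal{F}$ as follows. A generator $\bs{x} = \{x_1, \dots, x_\kappa\}$ consists of chords $x_i$ from $T^*_{q_i}\Sigma$ to $T^*_{q_{\sigma(i)}}\Sigma$ for some permutation $\sigma$, and the projections $\pi_\Sigma \circ x_i(t)$ give a path in $\mathrm{UConf}_\kappa(\Sigma)$, i.e., an element of $\mathrm{Br}_{\kappa,1}(\Sigma,\mathbf{q},\star)$ after attaching the trivial strand at $\star$. Sending $\bs{x}$ to this braid class, weighted by the $\hbar^{\kappa - \chi(u)}$ and $c^{2\langle u, \star\rangle}$ factors arising in the $\mu^2$ count, yields a well-defined map at the level of generators. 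Surjectivity is immediate because any given braid is realized by the chords of a suitable $H_V$ in its homotopy class.

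The heart of the argument, and the main obstacle, is verifying the HOMFLY and marked-point relations on the image of $\mu^2$. For the marked-point relation, a disk whose $\Sigma$-projection crosses $\star$ contributes $c^2$ per signed intersection with $T^*_\star \Sigma$, matching \eqref{eq-c}. For the HOMFLY relation, I would adapt the symplectic skein argument of Ekholm--Shende: consider a generic one-parameter family of almost complex structures (or equivalently a family of $\kappa$-tuples of chords) in which the projection of a rigid $\mu^2$-disk develops a transverse self-intersection, analyzed locally as in Section \ref{section-local}. The one-parameter family resolves on either side as a smooth $\kappa$-sheeted branched cover realizing $\sigma_+$ or $\sigma_-$, and the nodal wall is compensated by gluing in a small branched disk of Euler characteristic $\kappa - 1$, which contributes $\hbar = s - s^{-1}$ times the smoothing. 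The sign and multiplicity analysis of this gluing is the delicate point; once it is established one obtains $\sigma_+ - \sigma_- = \hbar \cdot (\text{smoothing})$ in $CW$. Injectivity then follows because both sides are free $\mathbb{Z}\llbracket \hbar \rrbracket[c^{\pm 1}]$-modules indexed by braid classes modulo the two local relations, and $\mathcal{F}$ identifies these indexing sets. The identification with $\ddot{H}_\kappa$ in the $\Sigma = T^2$ case is then imported from Morton--Samuelson.
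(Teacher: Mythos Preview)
This statement is quoted from \cite{honda2022jems} and is not proved in the present paper; however, the paper's construction of the module map $\mathcal{F}_\alpha$ in Section~3 closely parallels the construction of $\mathcal{F}$ in \cite[\S6]{honda2022jems}, so one can read off what the intended argument is. Against that template, your proposal has a genuine gap in the second and fourth steps.

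Your definition of $\mathcal{F}$ is not the one used, and as written it is not coherent. You propose to send a tuple of chords $\boldsymbol{x}$ to the braid obtained by projecting each $x_i$ to $\Sigma$, ``weighted by the $\hbar^{\kappa-\chi(u)}$ and $c^{2\langle u,\star\rangle}$ factors arising in the $\mu^2$ count.'' But there is no curve $u$ in that description: projection of a chord is a purely topological operation with no Euler characteristic attached, and the projections of distinct chords need not stay disjoint in $\Sigma$, so you do not even get a well-defined braid. The map $\mathcal{F}$ in \cite{honda2022jems} is instead defined by counting rigid holomorphic \emph{triangles} in a moduli space $\mathcal{H}(\boldsymbol{x},\mathbf{q}',\mathbf{q})$ with one boundary component on the zero section $\Sigma$ (compare the moduli spaces $\mathcal{H}(\boldsymbol{y},\alpha,\mathbf{q})$ and $\mathcal{H}(\boldsymbol{x},\mathbf{q}',\mathbf{q})$ appearing here in \eqref{floer-condition-T1}); the braid assigned to $\boldsymbol{x}$ is the boundary evaluation $\mathcal{E}_\Sigma(u)$ of such a triangle, and the $\hbar$ and $c$ weights come from those triangles. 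The homomorphism property $\mathcal{F}(\mu^2(\boldsymbol{x}_1,\boldsymbol{x}_2))=\mathcal{F}(\boldsymbol{x}_1)\cdot\mathcal{F}(\boldsymbol{x}_2)$ is then proved by analyzing the boundary of a one-dimensional moduli space of quadrilaterals (the analogue of $\mathcal{H}(\boldsymbol{y},\alpha,\mathbf{q},\boldsymbol{x})$ here), where the HOMFLY relation appears as the contribution of nodal degenerations along $\Sigma$---not as a relation satisfied directly by $\mu^2$ as you suggest.

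Your bijectivity argument is also circular. Saying ``both sides are free $\mathbb{Z}\llbracket\hbar\rrbracket[c^{\pm1}]$-modules indexed by braid classes modulo the two local relations'' presupposes exactly what has to be shown on the $CW$ side. The actual argument (mirrored in Lemma~\ref{lemma-F-iso-0} and the proof of Theorem~\ref{thm main1} here) is to verify that $\mathcal{F}|_{\hbar=0}$ is an isomorphism by reducing to the Abbondandolo--Schwarz isomorphism, and then to lift this to all of $\mathbb{Z}\llbracket\hbar\rrbracket$ via the $\hbar$-adic filtration. Your sketch contains neither of these steps.
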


See \cite[\S6]{honda2022jems} for details in the construction of the map $\mathcal{F}$ \eqref{dahaiso}.
In this subsection, we generalize the construction of the map \eqref{dahaiso} to a map from the HDHF module $HW(T^{*}\Sigma,\mathbf{q},\alpha,\mathbf{p}_{0})$ to the braid Skein module $\mathrm{BSk}_\kappa(\Sigma,\mathbf{q},\alpha,\mathbf{p}_{0})$.

We begin by introducing several moduli spaces, which can be seen as generalizations of those considered in \cite[\S6]{honda2022jems}.
Given $\boldsymbol{y}\in CW(T_{\mathbf{q}}^*\Sigma,T_{\alpha}^*\Sigma)$, we define $\mathcal{H}(\boldsymbol{y},\alpha,\mathbf{q})$ to be the moduli space of maps
\begin{equation*}
    u\colon (\dot F,j)\to(D_{2}\times T^*\Sigma,J_{D_2}),
\end{equation*}
where $(F,j)$ is a compact Riemann surface with boundary, $\mathbf{w}_{0},\mathbf{w}_{1},\mathbf{w}_{2}$ are disjoint $\kappa$-tuples of boundary marked points of $F$, $\dot F=F\setminus\cup_i \mathbf{w}_i$, and $D_{2}\in \mathcal{A}_{2}$, so that $u$ satisfies
\begin{align}
    \label{floer-condition-T1}
    \left\{
        \begin{array}{ll}
            \text{$du\circ j=J_{D_2}\circ du$;}\\
            \text{$\pi_{T^*\Sigma}\circ u(z)\in \phi^1_{H_V}(T^*_{\mathbf{q}}\Sigma)$ if $\pi_{D_{2}}\circ u(z)\in\partial_{1}D_{2}$;}\\
            \text{$\pi_{T^*\Sigma}\circ u(z)\in T^*_\alpha\Sigma$ if $\pi_{D_{2}}\circ u(z)\in\partial_{2}D_{2}$;}\\
            \text{$\pi_{T^*\Sigma}\circ u(z)\in\Sigma$ if $\pi_{D_{2}}\circ u(z)\in\partial_{0}D_{2}$;}\\
            \text{$\pi_{T^*\Sigma}\circ u$ tends to $\mathbf{q}$ as $s_1\to+\infty$;}\\
            \text{$\pi_{T^*\Sigma}\circ u$ tends to $\boldsymbol{y}$ as $s_2\to+\infty$}\\
            \text{$\pi_{T^*\Sigma}\circ u$ tends to $\alpha$ as $s_0\to-\infty$;}\\
            \text{$\pi_{D_{2}}\circ u$ is a $\kappa$-fold branched cover of $D_{2}$.}
        \end{array}
    \right.
\end{align}

Given another $\kappa$-tuple of basepoints $\mathbf{q}'\subset\Sigma$, we define $\mathcal{H}(\boldsymbol{y},\mathbf{q}',\mathbf{q})$ similarly, replacing $T^*_\alpha\Sigma$ and $\alpha$ by $T^*_{\mathbf{q}'}\Sigma$ and $\mathbf{q}'$ respectively in (\ref{floer-condition-T1}).

\begin{lemma}
    Fixing a generic choice of perturbation data, $\mathcal{H}(\boldsymbol{y},\alpha,\mathbf{q})$ and $\mathcal{H}(\boldsymbol{y},\mathbf{q}',\mathbf{q})$ are smooth manifolds of dimension 0 and admit a compactification.
\end{lemma}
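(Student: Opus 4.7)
The plan is to establish transversality, zero-dimensionality, and Gromov compactness by adapting the arguments of \cite[Theorem 2.3, Lemma 2.6]{honda2022jems} and Lemma \ref{lemma-ind} above. The key new feature relative to those earlier moduli spaces is the third boundary arc $\partial_0 D_2$ on which $\pi_{T^*\Sigma}\circ u$ takes values in the zero section $\Sigma$, together with the asymptotic condition at $s_0\to-\infty$ lying on the clean intersection $\Sigma\cap T^*_\alpha\Sigma=\alpha$ (respectively $\Sigma\cap T^*_{\mathbf{q}'}\Sigma=\mathbf{q}'$).

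For transversality, a standard Sard--Smale argument perturbing $J_{D_2}$ within the class satisfying (J1)--(J4) cuts out the universal moduli space transversally; the branched-cover condition on $\pi_{D_2}\circ u$ is preserved under such perturbations, and genericity of $V$ makes the three Lagrangians mutually transverse away from the asymptotics. For the dimension count, $\mathcal{A}_2$ is a point (three boundary marked points rigidify the disk up to biholomorphism), all asymptotics at $w_1$ and $w_2$ are of degree zero by construction, and the Morse--Bott asymptotic at $w_0$ is handled by choosing a small auxiliary Morse perturbation on $\alpha$ (resp.\ using the discreteness of $\mathbf{q}'$) following \cite[\S6]{honda2022jems}; the index formula for the linearized Cauchy--Riemann operator then yields virtual dimension zero. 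Compactness follows from the standard wrapped-Floer recipe: the quadratic-at-infinity Hamiltonian $H_V$ combined with condition (J1) on the symplectization end prevents fiberwise escape via the integrated maximum principle, exactness rules out sphere and disk bubbling, and all limiting configurations correspond either to strip breaking at the three punctures or to nodal degeneration along $T^*_\alpha\Sigma$ as described in Section \ref{section-local}.

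The main obstacle is the Morse--Bott nature of the asymptotic at $w_0$ in $\mathcal{H}(\boldsymbol{y},\alpha,\mathbf{q})$: the clean intersection $\alpha=\Sigma\cap T^*_\alpha\Sigma$ is positive-dimensional, so a naive index count would overshoot by $\dim\alpha=1$. I expect this to be resolved by introducing a Morse perturbation along $\alpha$ as in \cite{honda2022jems}, whose effect on the linearized index is to subtract precisely the extra degree of freedom and land at zero; the analogous statement for $\mathcal{H}(\boldsymbol{y},\mathbf{q}',\mathbf{q})$ is strictly simpler since the clean intersection $\mathbf{q}'$ is already zero-dimensional.
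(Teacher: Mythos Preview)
Your approach matches the paper's: the paper's entire proof is the one-line citation ``This follows from Lemma 6.1 and Lemma 6.2 of \cite{honda2022jems}'', and your sketch unpacks exactly those ingredients (Sard--Smale transversality, index computation, Gromov compactness via exactness and the integrated maximum principle).

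Where you diverge is in the handling of the Morse--Bott asymptotic at $w_0$. You propose an auxiliary Morse perturbation on $\alpha$ ``following \cite[\S6]{honda2022jems}'', but \cite{honda2022jems} \S6 treats only cotangent fibers $T^*_{\mathbf{q}'}\Sigma$, whose intersection with $\Sigma$ is already transverse and zero-dimensional---no such perturbation appears there, so the attribution is misplaced. More importantly, in the present paper the moduli space $\mathcal{H}(\boldsymbol{y},\alpha,\mathbf{q})$ is genuinely defined with the output free to move along $\alpha$, and the downstream evaluation maps $\mathcal{E}_\alpha,\mathcal{E}_\Sigma$ record that endpoint; Morse-perturbing would pin the output to finitely many critical points and sit awkwardly with the definition of $\mathcal{F}_\alpha$. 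The correct resolution is that no perturbation is needed: passing from $T^*_{\mathbf{q}'}\Sigma$ to the conormal $T^*_\alpha\Sigma$ changes both the dimension of the clean intersection with $\Sigma$ (by $+1$ per strand) and the Maslov contribution at the corner (by $-1$ per strand), and these cancel in the index formula. In local Darboux coordinates near a point of $\alpha$, the pair $(T^*_\alpha\Sigma,\Sigma)$ differs from $(T^*_{q'}\Sigma,\Sigma)$ by a quarter-rotation in one symplectic $2$-plane, which exchanges a transverse direction for a tangential one. Thus the virtual dimension is already zero without any Morse correction, and the arguments of \cite{honda2022jems} carry over directly. (A minor point: the Morse--Bott family at $w_0$ is $\kappa$-dimensional, not $1$-dimensional, since each of the $\kappa$ strands lands independently on $\alpha$.)
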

\begin{proof}
    This follows from Lemma 6.1 and Lemma 6.2 of \cite{honda2022jems}.
\end{proof}

\begin{figure}[ht]
    \centering
    \includegraphics[width=12cm]{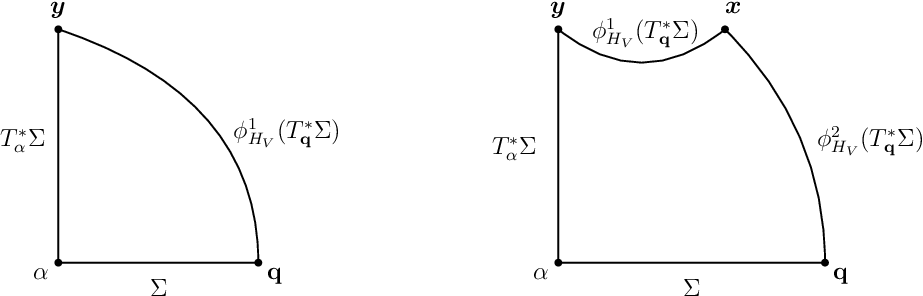}
    \caption{The $A_\infty$ base direction for $\mathcal{H}(\boldsymbol{y},\alpha,\mathbf{q})$ (left) and $\mathcal{H}(\boldsymbol{y},\alpha,\mathbf{q},\boldsymbol{x})$ (right).}
    \label{fig-T1T2}
\end{figure}

Given $\boldsymbol{y}\in CW(T_{\mathbf{q}}^*\Sigma,T_{\alpha}^*\Sigma)$ and $\boldsymbol{x}\in CW(T_{\mathbf{q}}^*\Sigma)$, we define $\mathcal{H}(\boldsymbol{y},\alpha,\mathbf{q},\boldsymbol{x})$ to consist of maps
\begin{equation*}
    u\colon (\dot F,j)\to(D_{3}\times T^*\Sigma,J_{D_3}),
\end{equation*}
where $(F,j)$ is a compact Riemann surface with boundary, $\mathbf{w}_{0},\dots,\mathbf{w}_{3}$ are disjoint $\kappa$-tuples of boundary marked points of $F$, $\dot F=F\setminus\cup_i \mathbf{w}_i$, and $D_{3}\in \mathcal{A}_{3}$, so that $u$ satisfies
\begin{align}
    \label{floer-condition-T2}
    \left\{
        \begin{array}{ll}
            \text{$du\circ j=J_{D_3}\circ du$;}\\
            \text{$\pi_{T^*\Sigma}\circ u(z)\in \phi^2_{H_V}(T^*_{\mathbf{q}}\Sigma)$ if $\pi_{D_{3}}\circ u(z)\in\partial_{1}D_{3}$;}\\
            \text{$\pi_{T^*\Sigma}\circ u(z)\in \phi^1_{H_V}(T^*_{\mathbf{q}}\Sigma)$ if $\pi_{D_{3}}\circ u(z)\in\partial_{2}D_{3}$;}\\
            \text{$\pi_{T^*\Sigma}\circ u(z)\in T^*_\alpha\Sigma$ if $\pi_{D_{3}}\circ u(z)\in\partial_{3}D_{3}$;}\\
            \text{$\pi_{T^*\Sigma}\circ u(z)\in\Sigma$ if $\pi_{D_{3}}\circ u(z)\in\partial_{0}D_{3}$;}\\
            \text{$\pi_{T^*\Sigma}\circ u$ tends to $\mathbf{q}$ as $s_1\to+\infty$;}\\
            \text{$\pi_{T^*\Sigma}\circ u$ tends to $\boldsymbol{x}$ as $s_2\to+\infty$}\\
            \text{$\pi_{T^*\Sigma}\circ u$ tends to $\boldsymbol{y}$ as $s_3\to+\infty$}\\
            \text{$\pi_{T^*\Sigma}\circ u$ tends to $\alpha$ as $s_0\to-\infty$;}\\
            \text{$\pi_{D_{3}}\circ u$ is a $\kappa$-fold branched cover of $D_{3}$.}
        \end{array}
    \right.
\end{align}

\begin{figure}[ht]
    \centering
    \includegraphics[width=14cm]{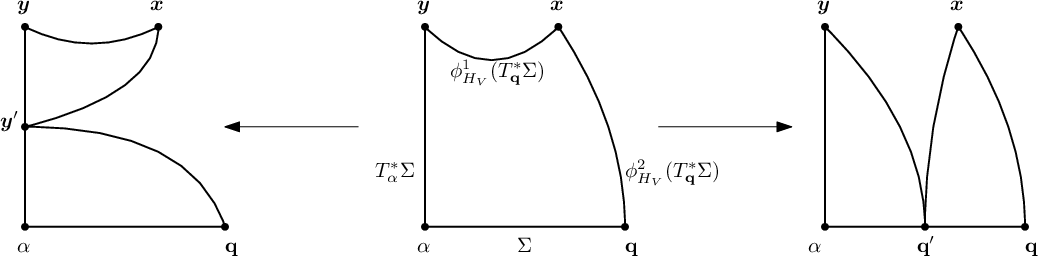}
    \caption{Codimension-1 degeneration of $\mathcal{H}(\boldsymbol{y},\alpha,\mathbf{q},\boldsymbol{x})$ in the $A_\infty$ base direction: concatenation of curves in $\mathcal{M}(\boldsymbol{y}',\boldsymbol{x},\boldsymbol{y})$ and $\mathcal{H}(\boldsymbol{y}',\alpha,\mathbf{q})$ (left); concatenation of curves in $\mathcal{H}(\boldsymbol{y},\alpha,\mathbf{q}')$ and $\mathcal{H}(\boldsymbol{x},\mathbf{q}',\mathbf{q})$ (right).}
    \label{fig-T2-deg}
\end{figure}

\begin{figure}[ht]
    \centering
    \includegraphics[width=13cm]{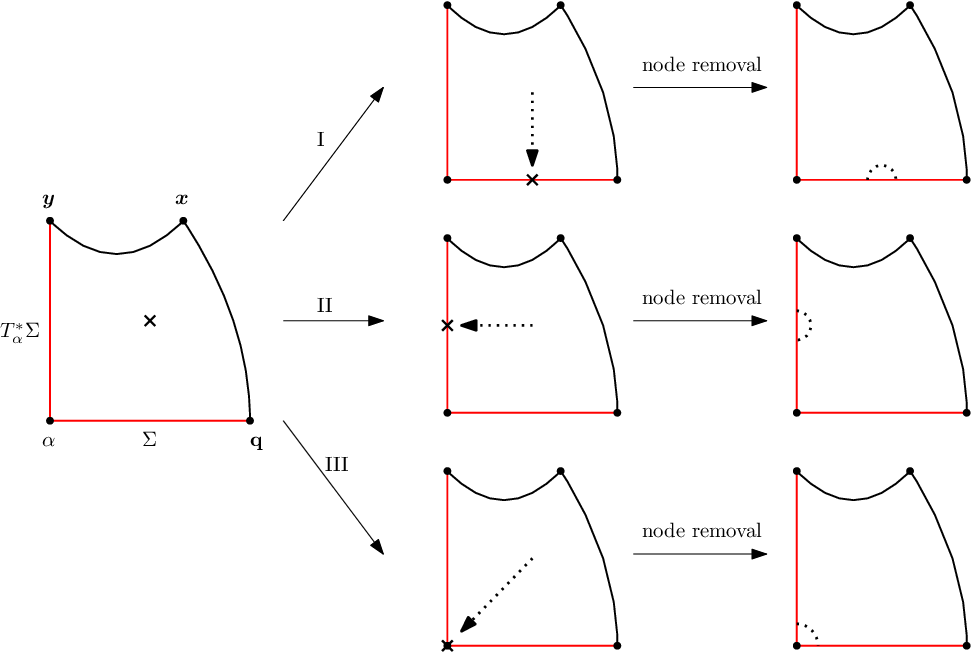}
    \caption{Three types of codimension-1 nodal degeneration: (I) a branched point approaches $\Sigma$; (II) a branched point approaches $T^*_\alpha\Sigma$; (III) a branched point approaches $\alpha$.}
    \label{fig-T2-nodal}
\end{figure}

\begin{lemma}
    Fixing a generic choice of perturbation data, $\mathcal{H}(\boldsymbol{y},\alpha,\mathbf{q},\boldsymbol{x})$ is a smooth manifold of dimension 1 and admits a compactification.
\end{lemma}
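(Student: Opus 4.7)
The plan is to combine the index-theoretic analysis from Lemma \ref{lemma-ind} with the treatment of the zero-section boundary condition from \cite[Lemma 6.2]{honda2022jems}. Transversality for a generic choice of $\{J_{D_3}\}$ satisfying (J1)--(J4) is standard: the universal moduli space is a Banach manifold with Fredholm projection to the space of perturbation data, and somewhere-injectivity in the fiber direction together with a Sard--Smale argument gives smoothness of $\mathcal{H}(\boldsymbol{y},\alpha,\mathbf{q},\boldsymbol{x})$.

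For the dimension count, I would invoke the index formula used in Lemma \ref{lemma-ind}, with the additional boundary arc $\partial_0 D_3$ mapping to the zero-section $\Sigma$ and the end at $\alpha$ treated as in \cite[Lemma 6.1]{honda2022jems}. Under our assumption $\operatorname{genus}(\Sigma)\geq 1$, every asymptotic chord has degree $0$; since $\dim\mathcal{A}_3=1$, the formal dimension equals $1$. A cheap sanity check: $\mathcal{H}(\boldsymbol{y},\alpha,\mathbf{q})$ had dimension $0$ and used $D_2\in\mathcal{A}_2$, and adding one more puncture for $\boldsymbol{x}$ raises the dimension of the $A_\infty$ base by exactly $1$.

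For the compactification, the codimension-$1$ boundary splits into two types. In the base direction, two adjacent marked points on $\partial D_3$ collide, producing the two strata $\mathcal{M}(\boldsymbol{y}',\boldsymbol{x},\boldsymbol{y})\times\mathcal{H}(\boldsymbol{y}',\alpha,\mathbf{q})$ and $\mathcal{H}(\boldsymbol{y},\alpha,\mathbf{q}')\times\mathcal{H}(\boldsymbol{x},\mathbf{q}',\mathbf{q})$ of Figure \ref{fig-T2-deg}. Both are handled by standard gluing and contribute honest smooth boundary. In the fiber direction, a branched point of $\pi_{D_3}\circ u$ may approach the boundary, yielding the three nodal degenerations in Figure \ref{fig-T2-nodal}: approach to $\Sigma$ (type I), to $T^*_\alpha\Sigma$ (type II), or to the corner $\alpha=\Sigma\cap T^*_\alpha\Sigma$ (type III). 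Types I and II are controlled by standard Gromov/SFT compactness as in \cite[Lemma 2.6]{honda2022jems}.

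The hard part will be type III, the corner nodal degeneration along $\alpha$. This is precisely the local situation of Section \ref{section-local}: the allowed local model (Figure \ref{fig-corner-skein-model}) opens up into a genuine $1$-parameter family and contributes a half-collar in the compactification, while the obstructed model (Figure \ref{fig-corner-skein-model-other}) admits no resolved family by the deck-transformation argument given there. Once these asymmetric gluing contributions are verified --- they are precisely what will eventually underlie the skein relation at a corner (\ref{eq-corner}) under $\mathcal{F}$ --- the compactified $\overline{\mathcal{H}(\boldsymbol{y},\alpha,\mathbf{q},\boldsymbol{x})}$ is a topological $1$-manifold with boundary.
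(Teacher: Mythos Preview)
Your dimension count and the enumeration of the broken and single-nodal boundary strata are correct, and your sanity check via $\dim\mathcal{A}_3=1$ is fine. But there is a genuine gap: you never address \emph{ghost bubbles}. Gromov compactness in the HDHF setting does not come for free, because the domain is a branched cover of $D_3$ and branched points may collide. The paper's proof is almost entirely devoted to excluding these configurations: two or more branched points colliding in the interior of $D_3$ (ghost spheres), colliding on $\partial_0 D_3$ or $\partial_3 D_3$ (ghost disks), and --- the genuinely new case here --- two or more branched points approaching the corner puncture $w_0$ simultaneously. The first two are handled by \cite{ekholm2022ghostbubblecensorship}, but the corner case at $w_0$ requires a separate argument: the paper works in the local model $\mathbb{C}^3$ with $\Sigma\leftrightarrow\{y_2=y_3=0\}$ and $T^*_\alpha\Sigma\leftrightarrow\{x_2=y_3=0\}$, squares the $z_1$- and $z_2$-projections to reduce to a straight Lagrangian boundary, and then invokes \cite[Theorem 1.1]{ekholm2022ghostbubblecensorship} together with the extra $z_3$-coincidence constraint to show the relevant nonconstant component has negative index. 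Without this step you have not shown that the compactification is a $1$-manifold rather than something with higher-codimension pathologies.

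A second, smaller point: your discussion of ``which local model opens up'' for the type III corner degeneration, and the remark that this underlies the skein relation (\ref{eq-corner}), is not part of this lemma. The present statement only asserts existence of the compactification and identifies the boundary strata as in Figures \ref{fig-T2-deg} and \ref{fig-T2-nodal}; the asymmetric gluing analysis from Section \ref{section-local} and its skein-theoretic interpretation belong to the proof of Lemma \ref{lemma-homo}. Separating these keeps the logic clean.
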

\begin{proof}
    Similar to Lemma 6.4 of \cite{honda2022jems}, since $|\boldsymbol{y}|=|\boldsymbol{x}|=0$, for a generic choice of perturbation data, $\mathcal{H}(\boldsymbol{y},\alpha,\mathbf{q},\boldsymbol{x})$ is a smooth manifold of dimension 1. By standard Gromov compactness theorem, it admits a compactification $\overline{\mathcal{H}}(\boldsymbol{y},\alpha,\mathbf{q},\boldsymbol{x})$. We then need to analyze its boundary behavior.

    Since the ambient symplectic manifold and all Lagrangians we consider are exact, there is no bubble with positive energy. We still need to exclude ghost bubbles, i.e., bubbles with zero energy. On the $A_\infty$ direction $D_3$, there could be two or more branched points collapsing inside $D_3$ (ghost bubble spheres) or on $\partial_0 D_3\sqcup\partial_3 D_3$ (ghost bubble disks). These are excluded by \cite{ekholm2022ghostbubblecensorship}. 
    The remaining case is that two or more branched points may approach $w_0$ simultaneously, which forms ghost bubble disks attached to $w_0$.

    Recall the local model in Section \ref{section-local}, where we consider $\mathbb{C}^3_{z_1,z_2,z_3}$, $z_i=x_i+jy_i$, $i=1,2,3$, with the standard complex structure $j$ and let $\pi_{z_i}\colon \mathbb{C}^3\to\mathbb{C}_{z_i}$ be the projection map.
    We assume that locally $\partial_0 D_3$ (resp. $\partial_3 D_3$) corresponds to $\mathbb{C}_{z_1}\cap\{y_1=0\}$ (resp. $\mathbb{C}_{z_1}\cap\{x_1=0\}$); $T^*\Sigma$ corresponds to $\mathbb{C}^2_{z_2,z_3}$, $\Sigma$ corresponds to $\mathbb{C}^2_{z_2,z_3}\cap\{y_2=y_3=0\}$, and $T^*_\alpha\Sigma$ corresponds to $\mathbb{C}^2_{z_2,z_3}\cap\{x_2=y_3=0\}$.
    Consider a 1-parameter family of holomorphic curves $u_\lambda\in{\mathcal{H}}(\boldsymbol{y},\alpha,\mathbf{q},\boldsymbol{x})$ in the local model.
    Then $(\pi_{z_1}\circ u_\lambda)^2$ (resp. $(\pi_{z_2}\circ u_\lambda)^2$) is a holomorphic map to $\mathbb{C}_{z_1}$ (resp. $\mathbb{C}_{z_2}$) with boundary condition on $\mathbb{C}_{z_1}\cap\{y_1=0\}$ (resp. $\mathbb{C}_{z_2}\cap\{y_2=0\}$). 
    Now that two or more branched points approaching $w_0$ simultaneously creates a boundary ghost bubble attached to $0\in\mathbb{C}^2_{z_1,z_2}$. 
    Denote the nonconstant component of the limiting curve by $v:(F,\partial F)\to (\mathbb{C}^3,\mathbb{R}^3)$.
    By \cite[Theorem 1.1]{ekholm2022ghostbubblecensorship}, creating a boundary ghost bubble along $\mathbb{R}^2\subset\mathbb{C}^2$ imposes either
    \begin{enumerate}
        \item a coincident and tangential boundary condition on $v$ at some $p_1,p_2\in\partial F$ which is of codimension 1. The projection $\pi_{z_3}\circ v(p_1)=\pi_{z_3}\circ v(p_2)$ imposes an additional codimension 1, hence $v$ is of negative index;
        \item a coincident boundary condition on $v$ at 3 points $p_1,p_2,p_3\in\partial F$. The projection $\pi_{z_3}\circ v(p_1)=\pi_{z_3}\circ v(p_2)=\pi_{z_3}\circ v(p_3)$ imposes codimension 2, hence $v$ is of negative index.
    \end{enumerate}
    In both cases, a transverse $v$ does not exist. Therefore, in a generic 1-parameter family of curves, two or more branched points cannot approach $w_0$ simultaneously.
    For the general case of almost complex structure $J$, we refer the reader to \cite{ekholm2022ghostbubblecensorship} for details.
    
    Now a 1-parameter family of curves in $\mathcal{H}(\boldsymbol{y},\alpha,\mathbf{q},\boldsymbol{x})$ may either
    \begin{enumerate}
        \item limit to broken curves by pinching boundaries of $D_3$ as shown in Figure \ref{fig-T2-deg};
        \item limit to nodal curves by letting one branched point approach $\partial_0 D_3$, $\partial_3 D_3$ or $w_0$ as shown in Figure \ref{fig-T2-nodal}.
    \end{enumerate}
\end{proof}

\begin{figure}[ht]
    \centering
    \includegraphics[width=9cm]{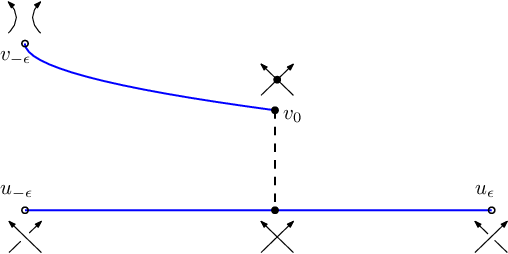}
    \caption{}
    \label{fig-skein-brane}
\end{figure}

We denote the subset of $\partial\overline{\mathcal{H}}(\boldsymbol{y},\alpha,\mathbf{q},\boldsymbol{x})$ corresponding to I, II, and III of Figure \ref{fig-T2-nodal} by $\partial_\Sigma\overline{\mathcal{H}}(\boldsymbol{y},\alpha,\mathbf{q},\boldsymbol{x})$, $\partial_\alpha\overline{\mathcal{H}}(\boldsymbol{y},\alpha,\mathbf{q},\boldsymbol{x})$, and $\partial_c\overline{\mathcal{H}}(\boldsymbol{y},\alpha,\mathbf{q},\boldsymbol{x})$, respectively.

Similar to (\ref{eq-evaluation}), for each $u\in\mathcal{H}(\boldsymbol{y},\alpha,\mathbf{q})$ or generic $u\in\mathcal{H}(\boldsymbol{y},\alpha,\mathbf{q},\boldsymbol{x})$, its boundary evaluation on $T^*_\alpha\Sigma$ defines a map $\mathcal{E}_\alpha(u)\in C^0([0,1],\op{UConf}_\kappa(T^*_\alpha\Sigma))$; similarly, the boundary evaluation on $\Sigma$ defines a map $\mathcal{E}_\Sigma(u)\in C^0([0,1],\op{UConf}_\kappa(\Sigma))$.

We define
\begin{gather} 
    \mathcal{F}_{\alpha}\colon HW(T^*\Sigma,\mathbf{q},\alpha,\mathbf{p}_0)\to \op{BSk}_\kappa(\Sigma,\mathbf{q},\alpha,\mathbf{p}_0)\otimes_{\mathbb{Z}[\hbar]}\mathbb{Z}\llbracket\hbar\rrbracket,\label{eq-F}\\
    [\gamma,\boldsymbol{x}]\mapsto \sum_{u\in\mathcal{H}(\boldsymbol{x},\alpha,\mathbf{q})} (-1)^{\natural(u)}\cdot c^{2\langle u,\star\rangle}\cdot\hbar^{\kappa-\chi(u)}\cdot [(\gamma\cdot\mathcal{E}_\alpha(u),\mathcal{E}_\Sigma(u))].\nonumber
\end{gather}

In a manner similar to the definition of $\mathcal{F}_{\alpha}$, we define the evaluation map
\begin{gather}
    \mathcal{G}\colon\mathcal{H}(\boldsymbol{y},\alpha,\mathbf{q},\boldsymbol{x})\to\op{BSk}_\kappa(\Sigma,\boldsymbol{y},\alpha,\mathbf{q})\otimes_{\mathbb{Z}[\hbar]}\mathbb{Z}\llbracket\hbar\rrbracket,\label{eq-G}\\
    u\mapsto (-1)^{\natural(u)}\cdot c^{2\langle u,\star\rangle}\cdot\hbar^{\kappa-\chi(u)}\cdot [(\mathcal{E}_\alpha(u),\mathcal{E}_\Sigma(u))].\nonumber
\end{gather}

\begin{lemma}
\label{lemma-homo}
  The map $\mathcal{F}_{\alpha}$ is a homomorphism of modules.
\end{lemma}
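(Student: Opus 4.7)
The plan is to prove the module homomorphism identity
$$\mathcal{F}_\alpha(\mu^2_\alpha(\boldsymbol{x},[\gamma,\boldsymbol{y}])) = \mathcal{F}(\boldsymbol{x})\cdot\mathcal{F}_\alpha([\gamma,\boldsymbol{y}])$$
by reading off the oriented boundary of the compactified $1$-dimensional moduli space $\overline{\mathcal{H}}(\boldsymbol{y},\alpha,\mathbf{q},\boldsymbol{x})$, whose strata were enumerated in the preceding lemma. Since a signed count of the boundary of any $1$-dimensional moduli space vanishes, it will be enough to identify each type of boundary contribution either with a term on one of the two sides of the identity above, or with a combination that is already zero in the target braid skein module. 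Note that since higher $\mu_\alpha^m$ vanish, the $A_\infty$-module identity collapses to exactly this relation.

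First I would treat the two codimension-one breakings along the $A_\infty$ base direction pictured in Figure \ref{fig-T2-deg}. The left breaking factors as $\mathcal{M}(\boldsymbol{y}',\boldsymbol{x},\boldsymbol{y}) \times \mathcal{H}(\boldsymbol{y}',\alpha,\mathbf{q})$; summed over $\boldsymbol{y}'$ and weighted by the evaluation maps $\mathcal{E}_\alpha$ and $\mathcal{E}_\Sigma$, this recovers $\mathcal{F}_\alpha\circ\mu^2_\alpha$, using that concatenation of $\mathcal{E}_\alpha$-paths at $\boldsymbol{y}'(1)$ matches the braid concatenation in the definition \eqref{modulemapformula} of $\mu_\alpha^2$. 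The right breaking factors as $\mathcal{H}(\boldsymbol{y},\alpha,\mathbf{q}') \times \mathcal{H}(\boldsymbol{x},\mathbf{q}',\mathbf{q})$: the second factor is exactly the count defining the map $\mathcal{F}(\boldsymbol{x})$ of \cite[\S6]{honda2022jems} (with $\mathbf{q}'$ playing the role of intermediate basepoint), while the first factor contributes $\mathcal{F}_\alpha([\gamma,\boldsymbol{y}])$, and the two $\Sigma$-evaluations concatenate along $\mathbf{q}'$ to realize the $\op{BSk}_\kappa(\Sigma,\mathbf{q})$-module action by braid concatenation at $\mathbf{q}$.

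Next I would show that each of the three nodal degenerations in Figure \ref{fig-T2-nodal} already vanishes in the target braid skein module. Type I (a branched point approaching $\Sigma$) was handled in \cite[\S6]{honda2022jems}: it produces two resolutions of the local model differing by the HOMFLY relation \eqref{eq-skein'}, so its signed contribution is killed in $\op{BSk}_\kappa(\Sigma,\mathbf{q},\alpha)$. Type II (a branched point approaching $T^*_\alpha\Sigma$) is formally identical but takes place on the conormal annulus, and is killed by the HOMFLY relation imposed on $\op{BSk}^\circ_\kappa(T^*_\alpha\Sigma,\mathbf{p}_0,\alpha)$. Type III (a branched point approaching the corner $w_0$, where $\alpha\subset\Sigma$ meets $T^*_\alpha\Sigma$) is the genuinely new case and should be handled via the local model of Section \ref{section-local}: the two possible resolutions of the nodal configuration correspond to the two nontrivial sides of the corner skein relation \eqref{eq-corner}, so their contributions cancel in $\op{BSk}_\kappa(\Sigma,\mathbf{q},\alpha,\mathbf{p}_0)$.

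The hard part will be type III. I will have to verify that the local gluing at the corner matches \eqref{eq-corner} exactly, including the asymmetric weights $s$ and $s^{-1}$ and the orientation signs. Concretely, this means using Figures \ref{fig-corner-skein-model} and \ref{fig-corner-skein-model-other} to determine, for each sign of $\lambda$, which of the two resolutions is realized as a genuine family, correlating it with the sign of the induced crossing of the concatenated braid $\gamma\cdot\mathcal{E}_\alpha(u)$ together with $\mathcal{E}_\Sigma(u)$ at the corner, and exploiting that only one resolution admits a nontrivial deformation (cf.\ the obstruction in Figure \ref{fig-corner-skein-model-other}) so as to produce precisely the identity $s \cdot (\text{negative crossing}) = s^{-1}\cdot(\text{positive crossing})$ needed for cancellation. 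Once this local correspondence is pinned down, the signed boundary count of $\overline{\mathcal{H}}(\boldsymbol{y},\alpha,\mathbf{q},\boldsymbol{x})$ yields the module homomorphism identity.
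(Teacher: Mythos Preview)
Your proposal is correct and takes essentially the same approach as the paper's proof. One small sharpening worth noting: the nodal strata do not ``vanish individually'' in the skein module; rather each nodal boundary point of $\overline{\mathcal{H}}^{\chi}$ pairs with a wall-crossing of the locally constant evaluation $\mathcal{G}$ along the interior of $\mathcal{H}^{\chi+1}$ (where the boundary braid undergoes a single crossing change), and the skein relation gives the compensation $\mathcal{G}(u_{-\epsilon})-\mathcal{G}(u_{+\epsilon})=\mathcal{G}(v_{-\epsilon})$---in particular for Type~III one only uses the HOMFLY-type consequence $(\text{positive})-(\text{negative})=\hbar\cdot(\text{identity})$ of \eqref{eq-corner}, so the asymmetric $s,s^{-1}$ weights you worry about need not be matched separately for this lemma.
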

\begin{proof}
    It suffices to show that
    \begin{equation*}
        \mathcal{F}_{\alpha}(\mu_{\alpha}^2(\boldsymbol{x},[\gamma,\boldsymbol{y}]))=\mathcal{F}(\boldsymbol{x})\mathcal{F}_{\alpha}([\gamma,\boldsymbol{y}])
    \end{equation*}
    for any $[\gamma,\boldsymbol{y}]\in CW(T^*\Sigma,\mathbf{q},\alpha,\mathbf{p}_0)$ and $\boldsymbol{x}\in CW(T^*_{\mathbf{q}}\Sigma)$. 
    We analyze the boundary of the 1-dimensional moduli space $\overline{\mathcal{H}}^\chi(\boldsymbol{y},\alpha,\mathbf{q},\boldsymbol{x})$ for varying Euler characteristic $\chi$.
    
    As shown in Figure \ref{fig-T2-deg} and Figure \ref{fig-T2-nodal}, there are five types of boundary degenerations:
    \begin{enumerate}
        \item $\coprod_{\boldsymbol{y}',\chi'+\chi''-\kappa=\chi} \mathcal{M}^{\chi'}(\boldsymbol{y}',\boldsymbol{x},\boldsymbol{y}) \times\mathcal{H}^{\chi''}(\boldsymbol{y}',\alpha,\mathbf{q})$;
        \item $\coprod_{\mathbf{q}',\chi'+\chi''-\kappa=\chi}\mathcal{H}^{\chi'}(\boldsymbol{y},\alpha,\mathbf{q}')\times\mathcal{H}^{\chi''}(\boldsymbol{x},\mathbf{q}',\mathbf{q})$;
        \item the set $\partial_\Sigma \overline{\mathcal{H}}^\chi(\boldsymbol{y},\alpha,\mathbf{q},\boldsymbol{x})$ with a nodal degeneration along $\Sigma$;
        \item the set $\partial_\alpha \overline{\mathcal{H}}^\chi(\boldsymbol{y},\alpha,\mathbf{q},\boldsymbol{x})$ with a nodal degeneration along $T^*_\alpha\Sigma$;
        \item the set $\partial_c \overline{\mathcal{H}}^\chi(\boldsymbol{y},\alpha,\mathbf{q},\boldsymbol{x})$ with a nodal degeneration along $\alpha$.
    \end{enumerate}
    (1) is given on the left-hand side of Figure \ref{fig-T2-deg} and contributes to $\mathcal{F}_{\alpha}(\mu_{\alpha}^2(\boldsymbol{x},[\gamma,\boldsymbol{y}]))$; (2) is given on the right-hand side of Figure \ref{fig-T2-deg} and contributes to $\mathcal{F}(\boldsymbol{x})\mathcal{F}_{\alpha}([\gamma,\boldsymbol{y}])$.
    A standard gluing argument shows that all contributions to $\mathcal{F}_{\alpha}(\mu_{\alpha}^2(\boldsymbol{x},[\gamma,\boldsymbol{y}]))$ and $\mathcal{F}(\boldsymbol{x})\mathcal{F}_{\alpha}([\gamma,\boldsymbol{y}])$ come from such broken degenerations. 

    We now discuss (3), which is shown in Case I of Figure \ref{fig-T2-nodal}. 
    Let $v_s\in{\mathcal{H}}^\chi(\boldsymbol{y},\alpha,\mathbf{q},\boldsymbol{x})$, $s\in[-\epsilon,0)$, be a generic 1-parameter family such that $\pi_{D_3}\circ v_s$ has a branched point (generically a double branched point) that limits to the zero section $\Sigma$ as $s\to 0$.
    We denote this limiting curve as $v_0\in\partial_{\Sigma}\overline{\mathcal{H}}^{\chi}(\boldsymbol{y},\alpha,\mathbf{q},\boldsymbol{x})$.
    By Gromov compactness, $\partial_{\Sigma}\overline{\mathcal{H}}^{\chi}(\boldsymbol{y},\alpha,\mathbf{q},\boldsymbol{x})$ consists of finite points.
    If we continue this family past the nodal curve $v_0$, then the nodal point is removed and $\chi$ increases by 1 as seen on the right-hand side of Case I of Figure \ref{fig-T2-nodal}. 
    Interpreted in another way, given $v_0\in \partial_{\Sigma}\overline{\mathcal{H}}^{\chi}(\boldsymbol{y},\alpha,\mathbf{q},\boldsymbol{x})$, there exists a 1-parameter family of curves $u_t$, $t\in[-\epsilon,\epsilon]$, in $\mathcal{H}^{\chi+1}(\boldsymbol{y},\alpha,\mathbf{q},\boldsymbol{x})$ such that $u_0$ is obtained from $v_0$ by removing the nodal point and the family $\mathcal{E}_\Sigma(u_0)$ corresponds to a crossing of two paths in $[0,1]\times \Sigma$.
    This is illustrated in Figure \ref{fig-skein-brane} as a skein relation on $[0,1]\times\Sigma$.
    
    On the other hand, given a 1-parameter family $u_t$, $t\in[-\epsilon,\epsilon]$, of curves in $\mathcal{H}^{\chi+1}(\boldsymbol{y},\alpha,\mathbf{q},\boldsymbol{x})$ such that $\mathcal{E}_\Sigma(u_0)$ exhibits a single crossing of two strands (see the bottom blue line of Figure \ref{fig-skein-brane}), let $v_0$ be the nodal curve corresponding to $u_0$ as in the middle of Case I of Figure \ref{fig-T2-nodal}. 
    Then there exists a unique 1-parameter family of curves $v_s\in{\mathcal{H}}^\chi(\boldsymbol{y},\alpha,\mathbf{q},\boldsymbol{x})$, $s\in[-\epsilon,0)$ which limits to $v_0$ as $s\to0$.
    We refer the reader to \cite[Lemma 4.16]{ekholm2021skeins} for the gluing details.
    Therefore, a small neighborhood of $v_0$ in $\overline{\mathcal{H}}^{\chi}(\boldsymbol{y},\alpha,\mathbf{q},\boldsymbol{x})$ is homeomorphic to $[-\epsilon,0]$, which corresponds to the upper arc of Figure \ref{fig-skein-brane}. 
    By the HOMFLY skein relation in the definition of $\op{BSk}_\kappa(\Sigma,\mathbf{q},\alpha)$, we see that $\mathcal{G}(u_{-\epsilon})-\mathcal{G}(u_{+\epsilon})=\mathcal{G}(v_{-\epsilon})$.

    \vskip0.2cm
    The discussion of (4), as shown in Case II of Figure \ref{fig-T2-nodal}, is exactly the same as (3), where instead of $\op{BSk}_\kappa(\Sigma,\mathbf{q},\alpha)$ we consider the HOMFLY skein relation on $\op{BSk}^\circ_\kappa(T^*_\alpha\Sigma,\mathbf{p}_0,\alpha)$.
    
    \vskip0.2cm
    Finally we discuss (5), which is given by Case III of Figure \ref{fig-T2-nodal}. 
    This is also similar to (3).
    Let $v_s\in{\mathcal{H}}^\chi(\boldsymbol{y},\alpha,\mathbf{q},\boldsymbol{x})$, $s\in[-\epsilon,0)$, be a generic 1-parameter family such that $\pi_{D_3}\circ v_s$ has a branched point that limits to the intersection $\alpha=\Sigma\cap T^*_\alpha\Sigma$ as $s\to 0$.
    We denote this limiting curve as $v_0\in\partial_{c}\overline{\mathcal{H}}^{\chi}(\boldsymbol{y},\alpha,\mathbf{q},\boldsymbol{x})$.
    There exists a 1-parameter family of curves $u_t$, $t\in[-\epsilon,\epsilon]$, in $\mathcal{H}^{\chi+1}(\boldsymbol{y},\alpha,\mathbf{q},\boldsymbol{x})$ such that $u_0$ is obtained from $v_0$ by removing the nodal point as seen on the right-hand side of Case III of Figure \ref{fig-T2-nodal}. 
    This is shown in the upper case of Figure \ref{fig-corner-skein}.
    
    On the other hand, given a 1-parameter family $u_t$, $t\in[-\epsilon,\epsilon]$, of curves in $\mathcal{H}^{\chi+1}(\boldsymbol{y},\alpha,\mathbf{q},\boldsymbol{x})$ such that $\mathcal{E}_\Sigma(u_0)$ and $\mathcal{E}_\alpha(u_0)$ both exhibit a single crossing of two strands at $\alpha$, let $v_0$ be the nodal curve corresponding to $u_0$ as in the middle of Case III of Figure \ref{fig-T2-nodal}. 
    Then there exists a unique 1-parameter family of curves $v_s\in{\mathcal{H}}^\chi(\boldsymbol{y},\alpha,\mathbf{q},\boldsymbol{x})$, $s\in[-\epsilon,0)$ which limits to $v_0$ as $s\to0$.
    This is shown by the local model in Section \ref{section-local}.
    By the skein relation at a corner (\ref{eq-corner}) in the definition of $\op{BSk}_\kappa(\Sigma,\mathbf{q},\alpha)$ and $\op{BSk}^\circ_\kappa(T^*_\alpha\Sigma,\mathbf{p}_0,\alpha)$, we see that $\mathcal{G}(u_{-\epsilon})-\mathcal{G}(u_{+\epsilon})=\mathcal{G}(v_{-\epsilon})$.

    \vskip0.2cm
    By comparing the definition of $\mathcal{F}_{\alpha}$ and $\mathcal{G}$, we can conclude that $\mathcal{F}_{\alpha}(\mu^{2}_{\alpha}(\boldsymbol{x},[\gamma,\boldsymbol{y}])=\mathcal{F}(\boldsymbol{x})\mathcal{F}_{\alpha}([\gamma,\boldsymbol{y}])$.
\end{proof}


\vskip0.1in

We now evaluate skeins in $\mathrm{BSk}_\kappa(T^*_\alpha\Sigma,\mathbf{p}_0)$ to scalars in $\mathbb{Z}[s,s^{-1}]$. 
The braid skein algebra $\mathrm{BSk}_\kappa(T^*_\alpha\Sigma,\mathbf{p}_0)$ of an annulus is isomorphic to the affine Hecke algebra $\dot{H}_{\kappa}$ \cite{honda2022jems}.
Recall that $\dot{H}_{\kappa}$ is generated by $X_1, \dots, X_{\kappa}, T_1, \dots , T_{\kappa-1}$ such that $T_i$'s generate the finite Hecke algebra, and $T_iX_iT_i=T_{i+1}$. 
Consider the 1-dimensional representation of the affine Hecke algebra
\begin{align*}
  \varphi: & \dot{H}_{\kappa}  \to  \mathbb{Z}[s,s^{-1}] \\
  & T_i  \mapsto  s, \\
  & X_i  \mapsto  s^{2i-1-\kappa}. 
\end{align*}

We then define an equivalence relation on $CW(T^*\Sigma,\mathbf{q},\alpha,\mathbf{p}_0)$: 
\begin{equation}
    [\gamma,\boldsymbol{x}]\sim k[\gamma',\boldsymbol{x}] \text{ if and only if }\varphi(\gamma\cdot\gamma'^{-1})=k\in\mathbb{Z}[s,s^{-1}],
\end{equation}
where $\gamma\cdot\gamma'^{-1}\in\mathrm{BSk}_\kappa(T^*_\alpha\Sigma,\mathbf{p}_0)$, and $\gamma'^{-1}$ denotes the inverse path of $\gamma'$.
Denote $CW(T^*\Sigma,\mathbf{q},\alpha,\mathbf{p}_0)^\varphi\coloneqq CW(T^*\Sigma,\mathbf{q},\alpha,\mathbf{p}_0)/{\sim}$.

Similarly, we define an equivalence relation on $\mathrm{BSk}_\kappa(\mathbf{q},\alpha,\mathbf{p}_0)$:
\begin{equation}
    (\gamma,\eta)\sim k(\gamma',\eta)\text{ if and only if }\varphi(\gamma\cdot\gamma'^{-1})=k\in\mathbb{Z}[s,s^{-1}],
\end{equation}
where $\gamma\cdot\gamma'^{-1}\in\mathrm{BSk}_\kappa(T^*_\alpha\Sigma,\mathbf{p}_0)$.
Denote $\op{BSk}_\kappa(\Sigma,\mathbf{q},\alpha,\mathbf{p}_0)^\varphi= \op{BSk}_\kappa(\Sigma,\mathbf{q},\alpha,\mathbf{p}_0)/{\sim}$.

\begin{lemma}
   The map $\mathcal{F}_{\alpha}$ induces a homomorphism:
    \begin{equation*}
        \overline{\mathcal{F}}_{\alpha}\colon HW(T^*\Sigma,\mathbf{q},\alpha,\mathbf{p}_0)^\varphi\to\op{BSk}_\kappa(\Sigma,\mathbf{q},\alpha,\mathbf{p}_0)^\varphi\otimes_{\mathbb{Z}[\hbar]}\mathbb{Z}\llbracket\hbar\rrbracket.
    \end{equation*}
\end{lemma}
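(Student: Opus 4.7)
The plan is to verify that $\mathcal{F}_\alpha$ is compatible with $\sim$ on both source and target, so that it descends to the desired map $\overline{\mathcal{F}}_\alpha$. By linearity it suffices to check that whenever $[\gamma,\boldsymbol{x}] \sim k[\gamma',\boldsymbol{x}]$ — equivalently, whenever $\varphi(\gamma\cdot\gamma'^{-1})=k$ in $\mathbb{Z}[s^{\pm1}]$ — one has $\mathcal{F}_\alpha([\gamma,\boldsymbol{x}]) \sim k\,\mathcal{F}_\alpha([\gamma',\boldsymbol{x}])$ in $\op{BSk}_\kappa(\Sigma,\mathbf{q},\alpha,\mathbf{p}_0)\otimes_{\mathbb{Z}[\hbar]}\mathbb{Z}\llbracket\hbar\rrbracket$. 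Inspecting \eqref{eq-F}, both sides are sums over the \emph{same} moduli space $\mathcal{H}(\boldsymbol{x},\alpha,\mathbf{q})$, which is independent of $\gamma$ or $\gamma'$; for each $u$ the sign $\natural(u)$, the intersection count $\langle u,\star\rangle$, the Euler characteristic $\chi(u)$, and the $\Sigma$-braid $\mathcal{E}_\Sigma(u)$ all depend only on $u$. Hence the comparison reduces to a term-by-term check in which only the $T^*_\alpha\Sigma$-braid differs.

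The key identity to verify for each $u\in\mathcal{H}(\boldsymbol{x},\alpha,\mathbf{q})$ is
\begin{equation*}
(\gamma\cdot\mathcal{E}_\alpha(u))\cdot(\gamma'\cdot\mathcal{E}_\alpha(u))^{-1} \,=\, \gamma\cdot\gamma'^{-1}
\quad\text{in } \op{BSk}^\circ_\kappa(T^*_\alpha\Sigma,\mathbf{p}_0).
\end{equation*}
This boils down to the cancellation $\mathcal{E}_\alpha(u)\cdot\mathcal{E}_\alpha(u)^{-1}=1$, which already holds in $\mathrm{Br}_\kappa(T^*_\alpha\Sigma,\boldsymbol{x}(1))$: a path concatenated with its reverse in $\op{UConf}_\kappa(T^*_\alpha\Sigma)$ is null-homotopic rel endpoints, so it represents the identity braid before any HOMFLY skein relation is invoked. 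Applying $\varphi$ to both ends and using the hypothesis $\varphi(\gamma\cdot\gamma'^{-1})=k$ gives, by the definition of $\sim$ on $\op{BSk}_\kappa(\Sigma,\mathbf{q},\alpha,\mathbf{p}_0)$,
\begin{equation*}
(\gamma\cdot\mathcal{E}_\alpha(u),\,\mathcal{E}_\Sigma(u)) \,\sim\, k\,(\gamma'\cdot\mathcal{E}_\alpha(u),\,\mathcal{E}_\Sigma(u)).
\end{equation*}
Summing over $u$ yields $\mathcal{F}_\alpha([\gamma,\boldsymbol{x}]) \sim k\,\mathcal{F}_\alpha([\gamma',\boldsymbol{x}])$, which is exactly the descent required.

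For the homomorphism property, $\overline{\mathcal{F}}_\alpha$ inherits its module compatibility from Lemma \ref{lemma-homo} once the module action descends to the quotient. This in turn reduces to showing $\mu^2_\alpha$ respects $\sim$, which uses the very same trivial-loop cancellation: if $[\gamma,\boldsymbol{y}]\sim k[\gamma',\boldsymbol{y}]$, then applying $\mu^2_\alpha(\boldsymbol{x},-)$ post-composes each summand's $T^*_\alpha\Sigma$-factor with the same path $\mathcal{E}_\alpha(u)$, leaving $\gamma\cdot\gamma'^{-1}$ unchanged. I do not anticipate any substantive obstacle: the entire argument is a bookkeeping exercise confirming that post-composition by $\mathcal{E}_\alpha(u)$ commutes with the equivalence. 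The only care required is in the orientation conventions used to identify $\mathcal{E}_\alpha(u)^{-1}$ with the formal inverse in $\op{BSk}^\circ_\kappa(T^*_\alpha\Sigma,\mathbf{p}_0,\mathbf{p}_0)$, and in confirming that the scalar $\varphi(\gamma\cdot\gamma'^{-1})$ depends only on the $T^*_\alpha$-braid piece and not on the $\Sigma$-braid component $\eta$ — both immediate from the setup of Definitions \ref{def-skein-module} and \ref{def-skein-module-cotangent}.
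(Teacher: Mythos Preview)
Your proposal is correct and follows essentially the same approach as the paper: both arguments reduce the descent to the termwise cancellation $\mathcal{E}_\alpha(u)\cdot\mathcal{E}_\alpha(u)^{-1}=1$ in the braid group on $T^*_\alpha\Sigma$, giving $\varphi\big((\gamma\cdot\mathcal{E}_\alpha(u))\cdot(\gamma'\cdot\mathcal{E}_\alpha(u))^{-1}\big)=\varphi(\gamma\cdot\gamma'^{-1})$. Your treatment is slightly more thorough in that you also spell out why the module action $\mu^2_\alpha$ descends to the quotient, which the paper leaves implicit.
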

\begin{proof}
    Given $[\gamma,\boldsymbol{x}]\sim[\gamma',\boldsymbol{x}]\in CW(T^*\Sigma,\mathbf{q},\alpha,\mathbf{p}_0)$, by definition,
    \begin{align*}
        \mathcal{F}_{\alpha}([\gamma,x])=\,&\sum_{u\in\mathcal{H}(\boldsymbol{x},\alpha,\mathbf{q})} (-1)^{\natural(u)}\cdot c^{2\langle u,\star\rangle}\cdot\hbar^{\kappa-\chi(u)}\cdot [(\gamma\cdot\mathcal{E}_\alpha(u),\mathcal{E}_\Sigma(u))]\\
        =\,&\sum_{u\in\mathcal{H}(\boldsymbol{x},\alpha,\mathbf{q})} (-1)^{\natural(u)}\cdot c^{2\langle u,\star\rangle}\cdot\hbar^{\kappa-\chi(u)}\cdot [(\gamma'\cdot\mathcal{E}_\alpha(u),\mathcal{E}_\Sigma(u))]\\
        =\,&\mathcal{F}([\gamma',x])
    \end{align*}
    in $\op{BSk}_\kappa(\Sigma,\mathbf{q},\alpha,\mathbf{p}_0)^\varphi\otimes_{\mathbb{Z}[\hbar]}\mathbb{Z}\llbracket\hbar\rrbracket$. Here the second equation is due to 
    \begin{equation*}
        \varphi\left((\gamma\cdot\mathcal{E}_\alpha(u))\cdot(\gamma'\cdot\mathcal{E}_\alpha(u))^{-1}\right)=\varphi(\gamma\cdot\mathcal{E}_\alpha(u)\cdot{\mathcal{E}_\alpha(u)}^{-1}\cdot\gamma'^{-1})=\varphi(\gamma\cdot\gamma'^{-1})=1.
    \end{equation*}
    Therefore, $\mathcal{F}_{\alpha}$ induces a map from $HW(T^*\Sigma,\mathbf{q},\alpha,\mathbf{p}_0)^\varphi$ to $\op{BSk}_\kappa(\Sigma,\mathbf{q},\alpha,\mathbf{p}_0)^\varphi\otimes_{\mathbb{Z}[\hbar]}\mathbb{Z}\llbracket\hbar\rrbracket$, denoted by $\overline{\mathcal{F}}_{\alpha}$.
\end{proof}

For each generator $\boldsymbol{x}\in CW(T_{\mathbf{q}}^*\Sigma,T_{\alpha}^*\Sigma)$, we fix $\gamma_{\boldsymbol{x}}\in \Omega(\op{UConf}_\kappa(T^*_\alpha\Sigma),\mathbf{p}_0,\boldsymbol{x}(1))$;
Fix an element $\gamma_0\in\Omega(\op{UConf}_\kappa(T^*_\alpha\Sigma),\mathbf{p}_0,\alpha)$.
For each $[\eta]\in\mathrm{BSk}_\kappa(\Sigma,\mathbf{q},\alpha)$, we pick $\eta$ such that $\eta(1)=\gamma_0(1)$.
Define the quotient maps
\begin{gather}
    \Psi\colon CW(T^*\Sigma,\mathbf{q},\alpha,\mathbf{p}_0)^\varphi\to CW(T_{\mathbf{q}}^*\Sigma,T_{\alpha}^*\Sigma),\quad [\gamma_{\boldsymbol{x}},\boldsymbol{x}]\mapsto\boldsymbol{x},\\
    \Phi\colon\op{BSk}_\kappa(\Sigma,\mathbf{q},\alpha,\mathbf{p}_0)^\varphi\to\mathrm{BSk}_\kappa(\Sigma,\mathbf{q},\alpha),\quad (\gamma_0,\eta)\mapsto \eta.\label{eq-Phi}
\end{gather}

\begin{lemma}
\label{lemma-F-iso-0}
    The restriction of $\overline{\mathcal{F}}_{\alpha}$ to $\hbar=0$ is an isomorphism.
\end{lemma}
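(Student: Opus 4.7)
The plan is to use the quotient maps $\Psi$ and $\Phi$ to identify both sides, mod $\hbar$, with free modules on natural bases, and to verify that $\overline{\mathcal{F}}_\alpha \pmod{\hbar}$ realizes the tautological bijection between those bases. The first simplification is that setting $\hbar = 0$ in the defining formula \eqref{eq-F} kills every term with $\chi(u) < \kappa$, since the curve count is weighted by $\hbar^{\kappa - \chi(u)}$. Only rigid $u \in \mathcal{H}(\boldsymbol{x}, \alpha, \mathbf{q})$ with $\chi(u) = \kappa$ contribute, and the $\kappa$-fold branched covering condition forces such $u$ to be a disjoint union of $\kappa$ triangles, each projecting one-to-one onto $D_2$, with boundary arcs on $\phi^1_{H_V}(T^*_{q_i}\Sigma)$, $T^*_\alpha\Sigma$, and the zero section $\Sigma$, asymptotic to a single chord $x_i$ of $\boldsymbol{x}$.

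Next, I would set up the bijection on bases. Hamiltonian chord tuples $\boldsymbol{x} \in CW(T^*_\mathbf{q}\Sigma, T^*_\alpha\Sigma)$ are canonically in bijection with braid classes in $\op{Br}_\kappa(\Sigma, \mathbf{q}, \alpha)$: the trajectory under the flow $\phi^t_{H_V}$, $t \in [0,1]$, projected to $\Sigma$, traces a braid $\eta_{\boldsymbol{x}}$, and conversely each braid is realized by a unique nondegenerate chord tuple of degree zero, thanks to the generic choice of $g$ and $V$. Under $\Psi$ and $\Phi$, the bases of source and target mod $\hbar$ are indexed by these two sets. The central geometric claim is then that $\mathcal{H}(\boldsymbol{x}, \alpha, \mathbf{q})|_{\chi = \kappa}$ contains, modulo the $\varphi$-equivalence on $\op{BSk}^\circ_\kappa(T^*_\alpha\Sigma, \mathbf{p}_0)$, exactly one rigid element $u_{\boldsymbol{x}}$, and that $\mathcal{E}_\Sigma(u_{\boldsymbol{x}}) = \eta_{\boldsymbol{x}}$ while $\mathcal{E}_\alpha(u_{\boldsymbol{x}})$ lies in the $\varphi$-class of $\gamma_0$. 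This parallels the corresponding computation for the HDHF algebra in \cite[\S 6]{honda2022jems}.

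Granting this, $\Phi \circ \overline{\mathcal{F}}_\alpha|_{\hbar = 0} \circ \Psi^{-1}$ sends each basis element $\boldsymbol{x}$ to $\pm c^{2\langle u_{\boldsymbol{x}}, \star\rangle}\,\eta_{\boldsymbol{x}}$, manifestly a bijection of bases with inverse $\eta \mapsto \boldsymbol{x}_\eta$. The main obstacle I anticipate is establishing this uniqueness assertion: verifying that there is exactly one rigid triangle tuple in $\mathcal{H}(\boldsymbol{x}, \alpha, \mathbf{q})|_{\chi = \kappa}$ up to $\varphi$, counted with the correct sign. The argument should combine a transversality and rescaling/neck-stretching analysis along $T^*_\alpha\Sigma$ adapting \cite[\S 6]{honda2022jems}, together with a check that the corner degenerations (Case III of Figure \ref{fig-T2-nodal}), controlled by the local model in Section \ref{section-local}, do not introduce spurious $\hbar^0$ contributions.
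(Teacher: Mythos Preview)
Your overall architecture---reduce to $\chi=\kappa$ via the $\hbar^{\kappa-\chi(u)}$ weight, then use $\Psi$ and $\Phi$ to compare bases---is exactly the paper's approach. Two points, however, deserve correction.

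First, you never invoke the reason $\Psi$ and $\Phi$ become isomorphisms at $\hbar=0$. The source $CW(T^*\Sigma,\mathbf{q},\alpha,\mathbf{p}_0)^\varphi$ is generated by pairs $[\gamma,\boldsymbol{x}]$ modulo the $\varphi$-equivalence, and a priori this is much larger than the set of chord tuples. The paper's first sentence is the key simplification: when $\hbar=s-s^{-1}=0$ one has $s=\pm 1$, hence $\varphi(T_i)=s$ and $\varphi(X_i)=s^{2i-1-\kappa}$ are both $\pm 1$, so $\varphi$ is trivial on every braid and the $\gamma$-coordinate collapses entirely. Your phrase ``modulo the $\varphi$-equivalence'' hides this, and your later assertion that ``$\mathcal{E}_\alpha(u_{\boldsymbol x})$ lies in the $\varphi$-class of $\gamma_0$'' is unnecessary (and not quite well-posed, since the endpoints do not match) once one knows every $\varphi$-class is a point.

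Second, the ``main obstacle'' you flag---uniqueness of the rigid triangle tuple---is not established by neck-stretching along $T^*_\alpha\Sigma$ or by controlling corner degenerations. Corner degenerations (Case~III of Figure~\ref{fig-T2-nodal}) only affect the $\hbar^{\geq 1}$ terms and are irrelevant at $\hbar=0$; once $\chi=\kappa$, the domain is a disjoint union of disks and no branch points exist to degenerate. The paper instead cites \cite[Theorem~7.1]{abbondandolo2010floer} for $\kappa=1$, which asserts precisely that the half-disk count $\Phi\circ\overline{\mathcal F}_\alpha\circ\Psi^{-1}$ realizes the Abbondandolo--Schwarz isomorphism $HW(T^*_q\Sigma,T^*_\alpha\Sigma)\cong H_0(\Omega(\Sigma,q,\alpha))$, and then invokes \cite[Lemma~6.6]{honda2022jems} to pass to general $\kappa$ (the $\hbar=0$ theory is a symmetric product of the $\kappa=1$ theory). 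Your claim that chord tuples biject with braid classes ``thanks to the generic choice of $g$ and $V$'' also needs this input: genericity gives nondegeneracy, but it is the Abbondandolo--Schwarz isomorphism (or, alternatively, a nonpositively curved metric) that forces exactly one degree-zero chord per homotopy class.
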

\begin{proof}
    First note that when $\hbar=0$, $\varphi(\gamma)=1$ for all $\gamma\in\Omega(\mathrm{UConf}_{\kappa}(T^*_\alpha\Sigma),\mathbf{p}_0)$.
    Therefore, $\Psi$ and $\Phi$ are isomorphisms by definition.

    Consider the case of $\kappa=1$. 
    By Theorem 7.1 of \cite{abbondandolo2010floer}, the map $\Phi\circ\overline{\mathcal{F}}_{\alpha}\circ\Psi^{-1}\colon CW(T_{\mathbf{q}}^*\Sigma,T_{\alpha}^*\Sigma)\to\mathrm{BSk}_\kappa(\Sigma,\mathbf{q},\alpha)$ is an isomorphism, hence $\overline{\mathcal{F}}_{\alpha}$ is an isomorphism.

    For general $\kappa\geq1$, the map $\Phi\circ\overline{\mathcal{F}}_{\alpha}\circ\Psi^{-1}$ is also well-defined and an isomorphism by the same arguments as \cite[Lemma 6.6]{honda2022jems}.

    Therefore, $\overline{\mathcal{F}}_{\alpha}|_{\hbar=0}$ is an isomorphism.
\end{proof}

\begin{lemma}
\label{lemma-psi-iso}
    $\op{BSk}_\kappa(\Sigma,\mathbf{q},\alpha,\mathbf{p}_0)^\varphi$ is isomorphic to $\mathrm{BSk}_\kappa(\Sigma,\mathbf{q},\alpha)$.
\end{lemma}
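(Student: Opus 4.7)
The plan is to show that $\Phi$ in \eqref{eq-Phi} is an isomorphism by constructing an explicit inverse $\Psi \colon \mathrm{BSk}_\kappa(\Sigma,\mathbf{q},\alpha) \to \op{BSk}_\kappa(\Sigma,\mathbf{q},\alpha,\mathbf{p}_0)^\varphi$. For each class $[\eta] \in \mathrm{BSk}_\kappa(\Sigma,\mathbf{q},\alpha)$ pick a representative $\eta$ with $\eta(1)=\gamma_0(1)$; such a representative exists because the homotopies defining $\mathrm{Br}_\kappa(\Sigma,\mathbf{q},\alpha)$ allow the endpoint to slide freely along $\alpha$. Set $\Psi([\eta]) \coloneqq [(\gamma_0,\eta)]$. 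The content of the lemma is the well-definedness of $\Psi$.

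Given two such representatives $\eta_1, \eta_2$, both ending at $\gamma_0(1)$ and defining the same skein class, I would decompose the relation $[\eta_1]=[\eta_2]$ as a finite sequence of: (a) fixed-endpoint pair-homotopies; (b) local skein relations on the $\eta$-component; or (c) sliding homotopies in which the common endpoint traces out a loop $L$ on $\alpha$ based at $\gamma_0(1)$. Cases (a) and (b) lift tautologically to the pair module $\op{BSk}_\kappa(\Sigma,\mathbf{q},\alpha,\mathbf{p}_0)$ because those relations appear identically in its definition.

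Case (c) is the main step. If $\eta_2\cdot L^{-1}$ equals $\eta_1$ up to fixed-endpoint homotopy, then in $\op{BSk}_\kappa(\Sigma,\mathbf{q},\alpha,\mathbf{p}_0)$ the pair $(\gamma_0,\eta_2)$ is pair-homotopic to $(\gamma_0\cdot L,\eta_1)$, obtained by pushing the common endpoint back along $L^{-1}$ in both factors. The equivalence $\sim$ together with conjugation invariance of the character $\varphi$ on $\dot H_\kappa$ gives $(\gamma_0\cdot L,\eta_1) \sim \varphi(L)(\gamma_0,\eta_1)$. On the target side, iterating the corner skein relation \eqref{eq-corner} yields $[\eta_1\cdot L] = \varphi(L)\cdot [\eta_1]$ in $\mathrm{BSk}_\kappa(\Sigma,\mathbf{q},\alpha)$. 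The two scalars match, so $[(\gamma_0,\eta_1)]=[(\gamma_0,\eta_2)]$ in $\op{BSk}^\varphi$, proving that $\Psi$ is well-defined.

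Surjectivity of $\Phi$ is then immediate from $\Phi\circ\Psi=\mathrm{id}$. For injectivity, any $[(\gamma,\eta)]\in \op{BSk}^\varphi$ admits the normal form $k\cdot[(\gamma_0,\eta')]$: slide the common endpoint to $\gamma_0(1)$ along some path $\delta$ on $\alpha$, yielding the pair-homotopic $(\gamma\cdot\delta,\eta\cdot\delta)$, and then apply $\sim$ with scalar $k=\varphi((\gamma\cdot\delta)\cdot\gamma_0^{-1})$. Independence of this normal form from the choice of $\delta$ is the same computation as case (c). The main obstacle is precisely that compatibility: verifying that iterating \eqref{eq-corner} along an arbitrary loop $L\in\pi_1(\op{UConf}_\kappa(\alpha))$ produces exactly $\varphi(L)\in\mathbb{Z}[s^{\pm1}]$. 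This reduces to a combinatorial check that writes $L$ as a product of elementary endpoint-slides past neighboring strands and matches the resulting $s^{\pm1}$ factors to the values $\varphi(X_i)=s^{2i-1-\kappa}$ defining the one-dimensional representation of $\dot H_\kappa$.
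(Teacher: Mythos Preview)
Your approach is exactly the paper's: define the inverse $\rho([\eta])=[(\gamma_0,\eta)]$ (your $\Psi$) and argue it is a well-defined two-sided inverse to $\Phi$; the paper states precisely this and explicitly leaves all verification to the reader, while you go further and sketch that check. One refinement to your case (c): a loop $L\in\pi_1(\op{UConf}_\kappa(\alpha))\cong\mathbb{Z}$ is generated by the simultaneous cyclic rotation of all $\kappa$ points (individual endpoints cannot slide past one another on $\alpha\cong S^1$), and what you actually need is simply $\varphi(\gamma_0 L\gamma_0^{-1})=1$, which follows from $\varphi(X_\kappa T_{\kappa-1}^{-1}\cdots T_1^{-1})=s^{\kappa-1}\cdot s^{-(\kappa-1)}=1$ --- the corner relation on the $\eta$ side plays no role here since $[\eta_1\cdot L]=[\eta_1]$ already holds as a free-endpoint homotopy in $\op{Br}_\kappa(\Sigma,\mathbf{q},\alpha)$.
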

\begin{proof}
    Fix $\gamma_0\in\Omega(\op{UConf}_\kappa(T^*_\alpha\Sigma),\mathbf{p}_0,\alpha)$ as in (\ref{eq-Phi}).
    For each $[\eta]\in\mathrm{BSk}_\kappa(\Sigma,\mathbf{q},\alpha)$, we pick $\eta$ such that $\eta(1)=\gamma_0(1)$.
    We then define 
    \begin{gather*}
        \rho\colon\mathrm{BSk}_\kappa(\Sigma,\mathbf{q},\alpha)\to\op{BSk}_\kappa(\Sigma,\mathbf{q},\alpha,\mathbf{p}_0)^\varphi,\quad [\eta]\mapsto [(\gamma_0,\eta)].
    \end{gather*}
    We leave it to the reader to check that $\rho$ is well-defined and is an isomorphism.
\end{proof}

\noindent {\em Proof of Theorem \ref{thm main1}:}
    It follows from Lemma \ref{lemma-F-iso-0}, Lemma \ref{lemma-psi-iso}, and the same proof of \cite[Theorem 1.4]{honda2022jems}.
\qed

\section{The polynomial representation of $\ddot{H}_{\kappa}$}
We focus on $\Sigma=T^2$ in this section. 
We show that the braid skein module is isomorphic to the polynomial representation $P_{\kappa}$ of $\ddot{H}_{\kappa}$; see Propostion \ref{prop BSk poly}. 
Combining with Theorem \ref{thm main1}, it gives a HDHF realization of the polynomial representation.  
We then give a topological interpretation of Cherednik's inner product on the polynomial representation $P_{\kappa}$.

\subsection{Interpretation via skeins and HDHF}

Consider the 2-torus $T^2=S^1_x\times S^1_y$ where $S^1=[0,1]/\{0\sim 1\}$ equipped with the standard Riemannian metric. 
Let $\alpha=\{0\}_x\times S^1_y$.
Let $\mathbf{q}=\{q_1,\dots,q_\kappa\}\subset T^2$ be a $\kappa$-tuple of basepoints on $\{0\}_x\times (0,1)_y$ with an increasing order of $y$-coordinates. 
We further fix the marked point $\star$ with its $y$-coordinate as $0$. 

The double affine Hecke algebra $\ddot{H}_{\kappa}$ is generated by
\begin{equation*}
    \{X_1,\dots,X_\kappa,Y_1,\dots,Y_\kappa,T_1,\dots,T_{\kappa-1}\}.
\end{equation*}
According to \cite{morton2021dahas}, $\ddot{H}_{\kappa}$ is isomorphic to $\mathrm{BSk}_\kappa(T^2,\mathbf{q})$ by the identification in Figure \ref{fig-DAHA}.
The variables $s, c$ in $\mathrm{BSk}_\kappa(T^2,\mathbf{q})$ matches $t, q$ in $\ddot{H}_{\kappa}$ via $s^2 \to t, c^2 \to q^{-1}$.

\begin{figure}[ht]
    \centering
    \includegraphics[width=14cm]{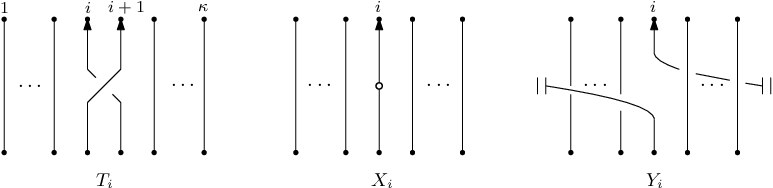}
    \caption{The circle in the $i$-th strand denotes the degree 1 loop in the $S^1_x$ direction.}
    \label{fig-DAHA}
\end{figure}

Recall that the polynomial representation $P_{\kappa}$ of $\ddot{H}_{\kappa}$ is an induced representation. 
More precisely, $P_{\kappa}$ is isomorphic to a quotient of $\ddot{H}_{\kappa}$ by the following relations: 
\begin{itemize}
    \item (A1): $\Theta T_i \sim s\Theta$, $i=1,\dots,\kappa-1$,
    \item (A2): $\Theta Y_i \sim s^{\kappa+1-2i}\Theta$, $i=1,\dots,\kappa$,
\end{itemize}
for $\Theta\in \ddot{H}_{\kappa}$.
Denote $X^{\mathbf{a}}=X_1^{a_1}\cdots X_{\kappa}^{a_{\kappa}}$ and $Y^{\mathbf{b}}=Y_1^{b_1}\cdots Y_{\kappa}^{b_{\kappa}}$ for $\mathbf{a}=(a_1,\dots, a_{\kappa}), \mathbf{b}=(b_1,\dots, b_{\kappa})\in \Z^{\kappa}$.
Since $\ddot{H}_{\kappa}$ has a PBW basis $\{X^\mathbf{a}Y^\mathbf{b}T_w,\,\mathbf{a},\mathbf{b}\in\mathbb{Z}^\kappa,w\in S_\kappa\}$, it follows that this quotient has a basis $\{X^{\mathbf{a}},\mathbf{a}\in\mathbb{Z}^\kappa\}$ which is isomorphic to $P_{\kappa}$.

\begin{proposition}
\label{prop BSk poly}
    There is an isomorphism
    \begin{equation*}
        P_\kappa\to\mathrm{BSk}_\kappa(T^2,\mathbf{q},\alpha),
    \end{equation*}
    which commutes with the actions of $\ddot{H}_{\kappa}$ and $\op{BSk}_\kappa(\Sigma,\mathbf{q})$, respectively.
\end{proposition}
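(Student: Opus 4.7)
The plan is to build the isomorphism via the Morton--Samuelson identification $\ddot{H}_\kappa \cong \mathrm{BSk}_\kappa(T^2,\mathbf{q})$ (under which $T_i, X_i, Y_i$ correspond to the braids of Figure~\ref{fig-DAHA}) together with the $\mathrm{BSk}_\kappa(T^2,\mathbf{q})$-module structure on $\mathrm{BSk}_\kappa(T^2,\mathbf{q},\alpha)$ given by concatenation at $\mathbf{q}$. Since $P_\kappa$ is the quotient of $\ddot{H}_\kappa$ by the right ideal encoded by (A1) and (A2), it is universal among cyclic left $\ddot{H}_\kappa$-modules $M$ possessing a generator $v$ with $T_i v = s v$ and $Y_i v = s^{\kappa+1-2i} v$. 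I would take $\iota_0 \in \mathrm{BSk}_\kappa(T^2,\mathbf{q},\alpha)$ to be the trivial braid whose $i$-th strand is the constant path at $q_i$ (placing $\mathbf{q}$ on $\alpha$ after a small perturbation if necessary), define the candidate map $\bar{\Psi}$ by $[X^{\mathbf{a}}] \mapsto X^{\mathbf{a}} \cdot \iota_0$, and verify it is a well-defined isomorphism of $\ddot{H}_\kappa$-modules in the following steps.

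First I would verify (A1): $T_i \cdot \iota_0 = s\,\iota_0$. Since $\iota_0$ is trivial, the braid $T_i \iota_0$ consists of a single positive crossing of strands $i$ and $i+1$ right at the corner where these strands terminate on $\alpha$, and the corner skein relation (\ref{eq-corner}) yields the equality at once. Next I would verify (A2): $Y_i \cdot \iota_0 = s^{\kappa+1-2i}\,\iota_0$. The key observation is that $\alpha$ is itself a $y$-circle, so the degree-one loop of strand $i$ in the $y$-direction is isotopic, as a relative homotopy class with $\gamma(1) \subset \alpha$, to a configuration in which the endpoint of strand $i$ slides once around $\alpha$. During this slide the endpoint crosses each of the $\kappa-1$ other endpoints on $\alpha$; applying (\ref{eq-corner}) at each such crossing and tracking the orientation convention (which side of $\alpha$ the body of strand $i$ lies on as it wraps) produces a factor $s$ at each of the $\kappa-i$ crossings with $q_j$ for $j>i$ (traversed before the endpoint wraps past $y=1$) and a factor $s^{-1}$ at each of the $i-1$ crossings with $q_j$ for $j<i$ (traversed after the wrap), for a total of $s^{(\kappa-i)-(i-1)} = s^{\kappa+1-2i}$, as required.

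These two steps produce a surjective module map $\bar{\Psi}\colon P_\kappa \to \mathrm{BSk}_\kappa(T^2,\mathbf{q},\alpha)$: every braid from $\mathbf{q}$ to $\alpha$ is of the form $\Theta\cdot\iota_0$ for some $\Theta\in \mathrm{BSk}_\kappa(T^2,\mathbf{q})$ (after sliding endpoints back to $\mathbf{q}$ at the price of corner skein factors), so $\iota_0$ generates the module. For injectivity, I would use that the polynomial basis $\{X^{\mathbf{a}}\}_{\mathbf{a}\in\mathbb{Z}^\kappa}$ maps to classes $X^{\mathbf{a}}\iota_0$ separated by the tuple (ordered up to the permutations induced by smoothings) of $x$-winding numbers of the strands, an invariant that is preserved by both (\ref{eq-skein'}) and (\ref{eq-corner}). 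The main technical obstacle I expect is the sign bookkeeping in (A2): obtaining precisely $s^{\kappa+1-2i}$ rather than $s^{\pm(\kappa-1)}$ depends on the crossings above $q_i$ and below $q_i$ contributing with opposite signs, which requires a careful convention for the side of $\alpha$ on which the wrap of $Y_i$ is staged and for how the strands are pushed against $\alpha$ during the isotopy. A useful consistency check is the $\kappa=1$ case, where (A2) reduces to the tautology $Y_1 \iota_0 = \iota_0$ and no corner crossings arise.
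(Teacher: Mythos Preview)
Your construction of the forward map and the verifications of (A1) and (A2) are essentially what the paper does: one places $\mathbf{q}$ on $\alpha$, views any $\Theta\in\ddot H_\kappa\cong\mathrm{BSk}_\kappa(T^2,\mathbf{q})$ as a braid ending on $\alpha$, and observes that the right ideal generated by (A1)--(A2) is killed precisely by the corner relation~\eqref{eq-corner}. Surjectivity is also clear, since any braid to $\alpha$ can be slid back to $\mathbf{q}$.

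The gap is in your injectivity argument. The ``tuple of $x$-winding numbers of the strands'' is \emph{not} an invariant of the HOMFLY relation~\eqref{eq-skein'}, no matter how you order it. At a crossing with incoming arcs of accumulated $x$-winding $a,b$ and outgoing arcs carrying further winding $c,d$, the two crossing terms give strand-windings $\{a+d,\,b+c\}$ while the smoothing gives $\{a+c,\,b+d\}$; these are in general different even as multisets. Concretely, if one extends the assignment $\gamma\mapsto e_{\mathbf{a}(\gamma)}$ linearly, the skein relator $\gamma_+-\gamma_--(s-s^{-1})\gamma_0$ is sent to $-(s-s^{-1})e_{\mathbf{a}(\gamma_0)}\neq 0$. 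So this map does not descend to $\mathrm{BSk}_\kappa(T^2,\mathbf{q},\alpha)$ and cannot separate the classes $X^{\mathbf{a}}\iota_0$. (Only the \emph{total} $x$-winding $\sum_i a_i$ survives, which is far from enough.) Specializing to $s=1$ does recover the tuple of windings as an invariant and gives linear independence there, but lifting this to $\mathbb{Z}[s^{\pm1},c^{\pm1}]$ would require an additional flatness or freeness argument that you have not supplied.

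The paper handles injectivity by building an explicit inverse $g\colon\mathrm{BSk}_\kappa(T^2,\mathbf{q},\alpha)\to P_\kappa$: given a braid from $\mathbf{q}$ to $\alpha$, order the endpoints on $\alpha$ by their $y$-coordinates, isotope them within $(0,1)_y$ back to $\mathbf{q}$, and take the resulting class in $\ddot H_\kappa/\!\!\sim\,=P_\kappa$. The only nontrivial check is independence of the representative braid when an endpoint slides past $y=0$ on $\alpha$; this amounts to the identity
\[
[\Theta\,T_1\cdots T_{\kappa-1}Y_\kappa]\;=\;[\Theta\,T_{\kappa-1}\cdots T_1 Y_1^{-1}]\;=\;[\Theta]
\]
in $P_\kappa$, which follows directly from (A1)--(A2). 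That inverse map is what your proof is missing.
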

\begin{proof}
We first define a map
        $f:P_{\kappa}\to \mathrm{BSk}_\kappa(T^2,\mathbf{q},\alpha)$ as follows.
    Given a representative $\Theta \in \ddot{H}_{\kappa}$ of $[\Theta]\in P_{\kappa}$, it corresponds to a braid in $\mathrm{Br}_{\kappa,1}(T^2,\mathbf{q},\star) \cong  \ddot{H}_{\kappa}$. 
    This braid can also be viewed as a braid in
    $\mathrm{Br}_{\kappa,1}(T^2,\mathbf{q},\alpha,\star)$ since we have chosen $\mathbf{q}\subset\alpha$. Let $f(\Theta)$ be its class in $\mathrm{BSk}_\kappa(T^2,\mathbf{q},\alpha)$. 
    It is easy to check that $f(\Theta)=f(\Theta')$ if $[\Theta]=[\Theta']\in P_{\kappa}$, since the relations (A1), (A2) are incorporated in (\ref{eq-corner}).
    
    \begin{figure}[ht]
        \centering
        \includegraphics[width=14cm]{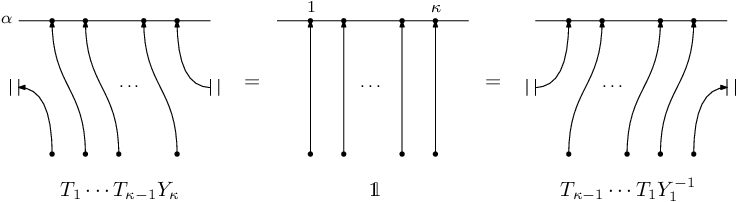}
        \caption{}
        \label{fig-alpha-relation}
    \end{figure}

    Now we define a map
    $g:\mathrm{BSk}_\kappa(T^2,\mathbf{q},\alpha)\to P_{\kappa}$ in the inverse direction.
    Given a braid representative $\gamma=(\gamma_1,\dots,\gamma_\kappa)\in\mathrm{Br}_{\kappa,1}(T^2,\mathbf{q},\alpha,\star)$ of $[\gamma]\in \mathrm{BSk}_\kappa(T^2,\mathbf{q},\alpha)$, we order $(\gamma_1,\dots,\gamma_\kappa)$ such that $0<\gamma_{i_1}(1)<\dots<\gamma_{i_\kappa}(1)<1$ in the $y$-coordinates.
    We apply an isotopy within $\{0\}_x\times (0,1)_y$ such that $(\gamma_{i_1}(1),\dots,\gamma_{i_\kappa}(1))=(q_1,\dots,q_\kappa)$. This gives a braid in $\mathrm{Br}_{\kappa,1}(T^2,\mathbf{q},\star) \cong \ddot{H}_{\kappa}$. We define $g(\gamma) \in P_{\kappa}$ as the class of this braid under the quotient map $\ddot{H}_{\kappa} \to P_{\kappa}$.
    
    We need to check that $g(\gamma)$ does not depend on the choice of $\gamma$ in $[\gamma]\in\mathrm{BSk}_\kappa(T^2,\mathbf{q},\alpha)$.
    It suffices to check the invariance under the isotopy of $\gamma$ in $\mathrm{Br}_{\kappa,1}(T^2,\mathbf{q},\alpha,\star)$ shown in Figure \ref{fig-alpha-relation}.
    This follows from
    \begin{equation*}
        [\Theta T_1\cdots T_{\kappa-1}Y_\kappa]=[\Theta T_{\kappa-1}\cdots T_1 Y_1^{-1}]=[\Theta] \in P_{\kappa},
    \end{equation*}
    for $\Theta\in \ddot{H}_{\kappa}$, due to the relations (A1) and (A2). 

    It is straightforward to check that $f$ and $g$ are inverses to each other. 
    This isomorphism is apparently commuting with the actions since both of them are the corresponding induced representations. 
\end{proof}

    

Combining with Theorem \ref{thm main1}, it gives a HDHF realization of the polynomial representation, i.e. Corollary \ref{cor}.  


\subsection{Cherednik's inner product}
We change the coefficient ring $\Z[s^{\pm 1},c^{\pm 1}]$ to the fractional field $\Q(s,c)$ throughout this subsection. 
Consider the path space
\begin{equation*}
    \Omega(\op{UConf}_\kappa(\Sigma),\alpha,\alpha)\coloneqq \{\gamma\in C^0([0,1],\op{UConf}_\kappa(\Sigma))\,|\,\gamma(0)\subset\alpha,\gamma(1)\subset \alpha\}.
\end{equation*}

Denote the set of homotopy classes of paths in $\Omega(\op{UConf}_\kappa(\Sigma),\alpha,\alpha)$, viewed as braids that start from $\alpha$ and end on $\alpha$, by $\op{Br}_\kappa(\Sigma,\alpha,\alpha)$.
Similarly, we define $\op{Br}_{\kappa,1}(\Sigma,\alpha,\alpha,\star)$ to consist of braids that start from $\alpha\sqcup\{\star\}$ and end on $\alpha\sqcup\{\star\}$ such that the last strand connects $\star$ to itself by a straight line in $[0,1]\times\Sigma$.

\begin{definition}
    \label{def-skein-alpha}
    The braid skein module $\mathrm{BSk}_\kappa(\Sigma,\alpha,\alpha)$ is the quotient of the vector space $\Q(s,c)[\mathrm{Br}_{\kappa,1}(\Sigma,\alpha,\alpha,\star)]$ by the same local relations in Definition \ref{def-skein-module}. 
\end{definition}

Consider the composition:
$$\op{BSk}_\kappa(T^2,\mathbf{q},\alpha) \otimes \op{BSk}_\kappa(T^2,\mathbf{q},\alpha) \to \op{BSk}_\kappa(T^2,\alpha,\mathbf{q}) \otimes \op{BSk}_\kappa(T^2,\mathbf{q},\alpha) \to \op{BSk}_\kappa(T^2,\alpha,\alpha),$$
where the first map is taking the inverse braid for the first factor, and the second map is a concatenation of braids along $\mathbf{q}$. 
We will show that the image of the map is one dimensional so that we have a bilinear form on $\op{BSk}_\kappa(T^2,\mathbf{q},\alpha)$.

Given $[\gamma]\in\mathrm{BSk}_\kappa(\Sigma,\alpha,\alpha)$, where $\gamma=\{\gamma_1,\dots,\gamma_\kappa\} \in \Omega(\op{UConf}_\kappa(\Sigma),\alpha,\alpha)$ (the constant strand at $\star$ is omitted), define the degree of $[\gamma]$ as an integer
\begin{equation*}
    \theta(\gamma_1)+\dots+\theta(\gamma_\kappa),
\end{equation*}
where $\theta$ is an identification of $H_1(\Sigma,\alpha;\mathbb{{Z}})$ with $\mathbb{Z}$. 
The degree of $[\gamma]$ is independent of choices of $\gamma$.
Let $\mathrm{BSk}^0_\kappa(\Sigma,\alpha,\alpha)$ be the degree $0$ summand of $\mathrm{BSk}_\kappa(\Sigma,\alpha,\alpha)$.
We denote the constant braid in $\mathrm{BSk}^0_\kappa(\Sigma,\alpha,\alpha)$ as $[\gamma_c]$. Note that the class $[\gamma_c]$ does not depend on the positions of endpoints of $\gamma_c$.

We compute $\mathrm{BSk}^0_\kappa(\Sigma,\alpha,\alpha)$ in a similar way as 
computing $\mathrm{BSk}_\kappa(\Sigma,\mathbf{q},\alpha)$ in Proposition \ref{prop BSk poly}.  
Define $Q_{\kappa}$ as a quotient of $\ddot{H}_{\kappa} \otimes_{\Z[s^{\pm 1}, c^{\pm 1}]} \mathbb{Q}(s,c)$ by the relations (A1), (A2), and the following relations: 
\begin{itemize}
    \item (B1): $T_i\Theta  \sim s\Theta$, $i=1,\dots,\kappa-1$,
    \item (B2): $Y_i\Theta  \sim s^{\kappa+1-2i}\Theta$, $i=1,\dots,\kappa$,
\end{itemize}
for $\Theta\in \ddot{H}_{\kappa}$. 
A proof similar to that of Proposition \ref{prop BSk poly} implies that 
\begin{equation*}
    \mathrm{BSk}_\kappa(\Sigma,\alpha,\alpha) \cong Q_{\kappa}.
\end{equation*}

Let $Q_{\kappa}^0$ be the subspace of $Q_{\kappa}$ corresponding to $\mathrm{BSk}^0_\kappa(\Sigma,\alpha,\alpha)$. 
It is known that the vector space $Q^0_{\kappa}$ is of dimension $1$. 
Nevertheless, we give an elementary proof for type A in the following.

We first give an equivalent description of $Q_{\kappa}$. 
Let $g=T_1\cdots T_{\kappa-1}Y_\kappa$. Then $Q_{\kappa}$ is also a quotient of $\ddot{H}_{\kappa}$ by the relations: 
\begin{itemize}
    \item (A): $\Theta T_i  \sim s\Theta$, and $\Theta g \sim \Theta$,
    \item (B): $T_i\Theta  \sim s\Theta$, and $g^{-1}\Theta g  \sim \Theta$,
\end{itemize}
for $\Theta\in \ddot{H}_{\kappa}$. 
Since $\ddot{H}_{\kappa}$ has a PBW basis, we can further restrict  $\Theta$ to be $X^{\mathbf{a}}$ for $\mathbf{a}=(a_1,\dots, a_{\kappa}) \in \Z^{\kappa}$ in the relations above. 
We have $g^{-1} X^{\mathbf{a}} g =c^{-2a_1}X^{\eta(\mathbf{a})}$, where $\eta(\mathbf{a})=(a_2,\dots,a_{\kappa},a_1)$ is the cyclic shift of $\mathbf{a}$; see \cite[Proposition 5.4]{guo2022comparing}.

Let $\dot{H}_{\kappa}$ denote the affine Hecke algebra generated by $X_i, T_i$'s. So $Q_{\kappa}$ can be written as a quotient of $\dot{H}_{\kappa}$ by the following relations:
\begin{itemize}
    \item (R1): $T_iX^{\mathbf{a}} \sim sX^{\mathbf{a}}$, and $X^{\mathbf{a}}T_i \sim sX^{\mathbf{a}}$,
    \item (R2): $X^{\mathbf{a}} \sim c^{-2a_1}X^{\eta(\mathbf{a})}$.
\end{itemize}
where $X^{\mathbf{a}}=X_1^{a_1}\cdots X_{\kappa}^{a_{\kappa}}$ for $\mathbf{a}=(a_1,\dots, a_{\kappa}) \in \Z^{\kappa}$.
So $Q_{\kappa}^0\cong \dot{H}_{\kappa}^0 \otimes_{\Z[s^{\pm 1}, c^{\pm 1}]} \mathbb{Q}(s,c) / \sim$, where $\dot{H}_{\kappa}^0$ is the degree zero summand of $\dot{H}_{\kappa}$. 

\begin{lemma}\label{lemma:automatictotalzero}
    The inclusion $\ddot{H}_{\kappa}^0 \subset \ddot{H}_{\kappa}$ after tensoring with $\Q(s,c)$ induces an isomorphism $Q_{\kappa}^0 \cong Q_{\kappa}$.
    Therefore, $\mathrm{BSk}_{\kappa}^0(\Sigma,\alpha,\alpha) \cong \mathrm{BSk}_{\kappa}(\Sigma,\alpha,\alpha)$.
\end{lemma}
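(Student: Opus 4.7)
The plan is to establish the isomorphism $Q_\kappa^0 \cong Q_\kappa$; the braid skein statement then follows by transport along the isomorphism $\mathrm{BSk}_\kappa(\Sigma,\alpha,\alpha)\cong Q_\kappa$ set up just before the lemma. Since the defining relations (R1) and (R2) both preserve the total degree $|\mathbf{a}| = a_1 + \cdots + a_\kappa$ of $X^{\mathbf{a}}$, the quotient $Q_\kappa$ inherits a $\Z$-grading from $\dot{H}_\kappa$, and $Q_\kappa^0$ is by construction its degree-zero summand. Consequently the map $Q_\kappa^0 \to Q_\kappa$ induced by $\dot{H}_\kappa^0 \hookrightarrow \dot{H}_\kappa$ is the inclusion of a direct summand, hence automatically injective; only surjectivity requires argument.

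For surjectivity I would iterate relation (R2) around a full cyclic orbit. Since $(\eta^k(\mathbf{a}))_1 = a_{k+1}$, successive applications yield
\begin{equation*}
    X^{\mathbf{a}} \sim c^{-2a_1} X^{\eta(\mathbf{a})} \sim c^{-2(a_1+a_2)} X^{\eta^2(\mathbf{a})} \sim \cdots \sim c^{-2(a_1+\cdots+a_\kappa)} X^{\eta^\kappa(\mathbf{a})} = c^{-2|\mathbf{a}|} X^{\mathbf{a}},
\end{equation*}
so that $(1 - c^{-2|\mathbf{a}|})X^{\mathbf{a}} \sim 0$ in $Q_\kappa$. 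Over the fraction field $\Q(s,c)$, the factor $1 - c^{-2|\mathbf{a}|}$ is a nonzero rational function as soon as $|\mathbf{a}| \neq 0$, hence invertible, forcing $X^{\mathbf{a}} = 0$ in $Q_\kappa$. Thus $Q_\kappa$ is spanned by those $X^{\mathbf{a}}$ with $|\mathbf{a}| = 0$, all of which already lie in $Q_\kappa^0$, completing surjectivity.

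The main conceptual point, and really the only subtlety, is that the $c$-factors telescope along the cyclic orbit to exactly $c^{-2|\mathbf{a}|}$, landing in $\Q(s,c)^\times$ precisely when $|\mathbf{a}| \neq 0$; when $|\mathbf{a}|=0$ the factor is trivial and no constraint is imposed, which is consistent with $Q_\kappa^0$ being nontrivial. This is also why the passage to $\Q(s,c)$ is essential: over $\Z[s^{\pm1},c^{\pm1}]$ one would only obtain $(1 - c^{-2|\mathbf{a}|})$ as an annihilator of $X^{\mathbf{a}}$, rather than the vanishing itself. Notably, relation (R1)—which is central to proving that $Q_\kappa^0$ itself is one-dimensional—plays no role in the present argument; only the cyclic shift relation (R2) is needed for $Q_\kappa^0 \cong Q_\kappa$.
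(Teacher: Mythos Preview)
Your proof is correct and follows essentially the same route as the paper's: both iterate relation (R2) around a full cyclic orbit to obtain $(1-c^{-2|\mathbf{a}|})X^{\mathbf{a}}\sim 0$, then invert the prefactor in $\Q(s,c)$ when $|\mathbf{a}|\neq 0$. If anything, your argument is slightly cleaner, since you treat a general $\mathbf{a}$ directly rather than first handling $\mathbf{a}=(1,0,\dots,0)$ and then appealing to an unspecified induction.
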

\begin{proof}
    Apply the relation (R2) to a tuple $\mathbf{a}=(1,0,\ldots,0)\in\Z^{\kappa}$ $\kappa$-times,
    we get $X_{1} \sim c^{-2\kappa}X_{1}$.
    Since $(1-c^{-2\kappa})$ is invertible in $\mathbb{Q}(s,c)$,
    this implies that $X_{1} \sim 0$ in $Q_{\kappa}$.
    A similar argument by induction shows that if a monomial $X^{\mathbf{a}}$ has non-zero total degree $a_{1}+\dots+a_{\kappa}$,
    then $X^{\mathbf{a}} \sim 0$.
\end{proof}

\begin{remark}
The relations (R1, R2) on the topological side are easy to visualize. 
The first relation is from Equation (\ref{eq-corner}), and the second is from Equation (\ref{eq-c}).
\end{remark}


\begin{lemma}
The dimension of the $\Q(s,c)$-vector space $Q^0_{\kappa}$ is at most $1$.
\end{lemma}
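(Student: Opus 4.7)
The plan is to show that every element of $Q_\kappa^0$ is a scalar multiple of $[1]$, by induction on a complexity measure of the exponent tuple. First, by the PBW theorem for $\dot{H}_\kappa$ and the right half of relation (R1) (which yields $X^{\mathbf{a}} T_w \sim s^{\ell(w)} X^{\mathbf{a}}$), $Q_\kappa$ is spanned by the images $[X^{\mathbf{a}}]$, and by the preceding lemma we may restrict to tuples with $\sum_i a_i = 0$.

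The key algebraic tool is the Demazure--Lusztig commutation in the affine Hecke algebra, $T_i X^{\mathbf{a}} - X^{s_i \mathbf{a}} T_i = \hbar \cdot \partial_i^{DL}(X^{\mathbf{a}})$, where $\partial_i^{DL}(X^{\mathbf{a}})$ is a Laurent polynomial whose monomials $X^{\mathbf{b}}$ satisfy $b_j = a_j$ for $j \ne i, i+1$ and have $(b_i, b_{i+1})$ strictly interpolating $(a_i, a_{i+1})$. Applying both halves of (R1) to this identity yields the swap-with-correction relation $s([X^{\mathbf{a}}] - [X^{s_i \mathbf{a}}]) \sim \hbar \cdot [\partial_i^{DL}(X^{\mathbf{a}})]$ in $Q_\kappa$.

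To close the induction, combine this with the cyclic shift (R2). Given $\mathbf{a} \ne \mathbf{0}$, first apply (R2) to arrange $a_1 \ne 0$; then apply (R2) once more together with a chain of transpositions $s_{\kappa-1}, s_{\kappa-2}, \ldots, s_1$ to bring $\eta \mathbf{a} = (a_2,\dots,a_\kappa,a_1)$ back to $\mathbf{a}$. The transpositions contribute $[X^{\eta\mathbf{a}}] \sim [X^{\mathbf{a}}]$ modulo lower-complexity $[X^{\mathbf{b}}]$'s (via the swap-with-correction relation), while (R2) contributes the overall factor $c^{-2a_1}$. The resulting self-referential equation $(1 - c^{-2a_1})[X^{\mathbf{a}}] \sim c^{-2a_1} \cdot R$, with $R$ a sum of lower-complexity terms, can be solved for $[X^{\mathbf{a}}]$ since $c^{-2a_1} \ne 1$ in $\mathbb{Q}(s,c)$. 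Induction on the complexity $\|\mathbf{a}\|^2 = \sum_i a_i^2$ then reduces each $[X^{\mathbf{a}}]$ to a scalar multiple of $[1]$, giving the desired bound $\dim_{\mathbb{Q}(s,c)} Q_\kappa^0 \le 1$.

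The main obstacle is making the complexity induction strict. The correction $\partial_i^{DL}(X^{\mathbf{a}})$ strictly lowers $a_i^2 + a_{i+1}^2$ when $|a_i - a_{i+1}| \ge 2$, but when $|a_i - a_{i+1}| = 1$ it is merely a scalar multiple of $X^{s_i \mathbf{a}}$ of the same complexity, so the swap only contributes a scalar factor rather than reducing complexity. Careful bookkeeping of these degenerate adjacent-swap steps, e.g.\ by refining the complexity measure to a dominance-type order on compositions or by tracking the scalar factors accumulated through a canonical bubble-sort sequence, is needed to ensure the iteration terminates and pins down the scalar $\lambda_{\mathbf{a}}$ uniquely.
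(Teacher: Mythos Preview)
Your approach is essentially the same as the paper's: reduce to monomials $[X^{\mathbf{a}}]$, use the Hecke commutation relation together with (R1) to relate $[X^{\mathbf{a}}]$ and $[X^{s_i\mathbf{a}}]$ modulo lower terms, bubble-sort to reach $[X^{\eta(\mathbf{a})}]$, and close up with (R2). The paper uses the lexicographic complexity $(\sum_i|a_i|,\max_i a_i,\#\{i:a_i=\max\})$ and arranges $a_1=\max_i a_i$; your choice $\|\mathbf{a}\|^2=\sum_i a_i^2$ is in fact cleaner and works equally well, since strictly interpolating exponents $(a_i-j,a_{i+1}+j)$ with $1\le j\le |a_i-a_{i+1}|-1$ strictly decrease $a_i^2+a_{i+1}^2$.

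Two points need tightening. First, in the $T_i$-form $T_iX^{\mathbf{a}}-X^{s_i\mathbf{a}}T_i=\hbar\,\partial_i^{DL}(X^{\mathbf{a}})$ the correction is \emph{not} strictly interpolating: it always contains the endpoint $X^{s_i\mathbf{a}}$ (e.g.\ $T_1X_1^2-X_2^2T_1=-\hbar(X_2^2+X_1X_2)$). The paper avoids this by using the $T_i^{-1}$-form $T_iX^{\mathbf{a}}=X^{s_i\mathbf{a}}T_i^{-1}+(\text{strictly interpolating})$, which under (R1) gives $s[X^{\mathbf{a}}]-s^{-1}[X^{s_i\mathbf{a}}]\in W$ directly. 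You can of course absorb the endpoint term into the left side, but then each swap produces the relation $[X^{s_i\mathbf{a}}]\equiv s^{2\,\mathrm{sgn}(a_i-a_{i+1})}[X^{\mathbf{a}}]\pmod{W}$, not $[X^{s_i\mathbf{a}}]\equiv[X^{\mathbf{a}}]$.

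Second, and consequently, your self-referential equation is misstated: after the chain of swaps one gets $[X^{\eta(\mathbf{a})}]\equiv s^{2r}[X^{\mathbf{a}}]\pmod{W}$ for some integer $r$, so combining with (R2) yields $(1-c^{-2a_1}s^{2r})[X^{\mathbf{a}}]\in W$, not $(1-c^{-2a_1})[X^{\mathbf{a}}]\in W$. You partly acknowledge this in your final paragraph, but it should be built into the main argument. The conclusion is unaffected since $c^{-2a_1}s^{2r}\neq 1$ in $\mathbb{Q}(s,c)$ whenever $a_1\neq 0$. With these two fixes your argument is complete, and no further ``refinement of the complexity measure'' or ``dominance-type order'' is needed.
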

\begin{proof}
The affine Hecke algebra $\dot{H}_{\kappa}$ has a PBW basis $\{T_wX^{\mathbf{a}}; w \in S_{\kappa}, \mathbf{a} \in \Z^{\kappa}\}$. 
Since $[T_wX^{\mathbf{a}}]=s^{l(w)}[X^{\mathbf{a}}]$, where $l(w)$ is the length of $w$, it is enough to show that $[X^{\mathbf{a}}]$ is proportional to $[1]$ for any $\mathbf{a}$ with $\sum_{i}a_i=0$.  

Let $W$ be the subspace of $\dot{H}^0_{\kappa} / \sim$ spanned by $[1]$. We show that $[X^{\mathbf{a}}] \in W$ for any $\mathbf{a}$ by induction on a complexity index $||\mathbf{a}||=(\sum_i|a_i|, \alpha(\mathbf{a}), \beta(\mathbf{a}))$. 
Here, $\alpha(\mathbf{a})=\max\{a_i\}$, and $\beta(\mathbf{a})=|\{i; a_i=\alpha(\mathbf{a})\}|$. By the cyclic relation (R2), we can assume that $a_1=\alpha(\mathbf{a})$, i.e. $a_1 \geq a_i$ for any $i$. Consider $T_1X^{\mathbf{a}}=T_1X_1^{a_1}X_2^{a_2}\cdots X_{\kappa}^{a_{\kappa}} \in \dot{H}_{\kappa}$. 
If $a_2<0$, then $T_1X_1^{a_1}X_2^{a_2}=X_1^{a_2}X_2^{a_1}T_1^{-1}+X^{\mathbf{b}}$, where $\sum_i|b_i|<\sum_i|a_i|$. 
If $0 \leq a_2 < a_1$, then $T_1X_1^{a_1}X_2^{a_2}=X_1^{a_2}X_2^{a_1}T_1^{-1}+X^{\mathbf{b}}$, where either $\alpha(\mathbf{b})<\alpha(\mathbf{a})$, or $\alpha(\mathbf{b})=\alpha(\mathbf{a}), \beta(\mathbf{b})<\beta(\mathbf{a})$. 
So we have $s[X_1^{a_1}X_2^{a_2}\cdots X_{\kappa}^{a_{\kappa}}]-s^{-1}[X_1^{a_2}X_2^{a_1}\cdots X_{\kappa}^{a_{\kappa}}] \in W$ by induction. 
If $a_2=a_1$, then $T_1X_1^{a_1}X_2^{a_2}=X_1^{a_2}X_2^{a_1}T_1$ so that $[X_1^{a_1}X_2^{a_2}\cdots X_{\kappa}^{a_{\kappa}}]=[X_1^{a_2}X_2^{a_1}\cdots X_{\kappa}^{a_{\kappa}}]$.
Then consider $T_2X_1^{a_2}X_2^{a_1}\cdots X_{\kappa}^{a_{\kappa}}$ and so on. We finally have $s^{r}[X_1^{a_1}X_2^{a_2}\cdots X_{\kappa}^{a_{\kappa}}]-s^{-r}[X_1^{a_2}X_2^{a_3}\cdots X_{\kappa}^{a_1}] \in W$, for some $r$, i.e. 
$$s^{r}[X^{\mathbf{a}}]-s^{-r}[X^{\eta(\mathbf{a})}] \in W.$$
On the other hand, $[X^{\mathbf{a}}]=c^{-2a_1}[X^{\eta(\mathbf{a})}]$ by the relation (R2). We conclude that $[X^{\mathbf{a}}] \in W$ for any $\mathbf{a}$ so that $W=\dot{H}^0_{\kappa} / \sim$.
\end{proof}

\begin{lemma}
The $\Q(s,c)$ vector space $Q^0_{\kappa}$ is nonzero.
\end{lemma}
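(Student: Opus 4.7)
The plan is to construct a nonzero $\Q(s,c)$-linear functional $\Phi\colon Q_\kappa \to \Q(s,c)$ with $\Phi([1]) \neq 0$, so that $[1]$ witnesses $Q_\kappa \neq 0$ and hence, by Lemma \ref{lemma:automatictotalzero}, $Q^0_\kappa \neq 0$. The candidate is $\Phi(\Theta) := \phi(\Theta \cdot 1)$, where $1 \in P_\kappa$ is the generator of the polynomial representation and $\phi\colon P_\kappa \to \Q(s,c)$ is the $\Q(s,c)$-linear map sending a polynomial to its coefficient on $E_0 = 1$ in the non-symmetric Macdonald basis $\{E_\mathbf{a}\}_{\mathbf{a}\in\Z^\kappa}$ of $P_\kappa$.

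The right-action relations (A1), (A2) defining $Q_\kappa$ are automatic from the fundamental identities $T_i \cdot 1 = s \cdot 1$ and $Y_i \cdot 1 = s^{\kappa+1-2i} \cdot 1$ in $P_\kappa$: for instance,
$$\Phi(\Theta T_i) = \phi\bigl(\Theta \cdot (T_i \cdot 1)\bigr) = s\,\phi(\Theta \cdot 1) = s\,\Phi(\Theta),$$
and similarly for $Y_i$. The left-action relations (B1), (B2) reduce to the equivariance properties $\phi(T_i p) = s\,\phi(p)$ and $\phi(Y_i p) = s^{\kappa+1-2i}\phi(p)$ on $P_\kappa$. The $Y_i$-equivariance is immediate from the Dunkl-eigenequation $Y_i E_\mathbf{a} = \lambda_{\mathbf{a},i} E_\mathbf{a}$ together with $\lambda_{0,i} = s^{\kappa+1-2i}$, since only the $E_0$-component contributes to $\phi$. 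For the $T_i$-equivariance one invokes the Knop-Sahi intertwining formulas, which express $T_i E_\mathbf{a}$ as an explicit two-term $\Q(s,c)$-linear combination of $E_\mathbf{a}$ and $E_{s_i \mathbf{a}}$; thus the $E_0$-coefficient of $T_i E_\mathbf{a}$ vanishes unless $\mathbf{a} = 0$ or $s_i\mathbf{a} = 0$, both equivalent to $\mathbf{a} = 0$, and $T_i E_0 = T_i \cdot 1 = s E_0$ then produces exactly the right coefficient. Hence $\phi(T_i E_\mathbf{a}) = s\,\delta_{\mathbf{a},0} = s\,\phi(E_\mathbf{a})$ as required.

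The main obstacle is the $T_i$-equivariance verification, which depends on the two-term Knop-Sahi expansion of $T_i E_\mathbf{a}$ in the non-symmetric Macdonald basis — a standard but nontrivial input from Cherednik-Macdonald theory. Once this is granted, $\Phi$ descends to a well-defined functional on $Q_\kappa$ with $\Phi([1]) = \phi(1) = \phi(E_0) = 1 \neq 0$, which concludes the proof.
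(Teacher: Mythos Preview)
Your proof is correct. At its core you construct the same linear functional as the paper: since the non-symmetric Macdonald polynomials are orthogonal with respect to Cherednik's inner product and $E_0=1$ has norm $1$, your ``coefficient of $E_0$'' functional $\phi$ is exactly $p\mapsto\langle p,1\rangle$, so your $\Phi(\Theta)=\langle\Theta\cdot 1,1\rangle$ agrees with the paper's $F$. The difference lies only in how well-definedness on $Q_\kappa$ is verified. The paper works with the presentation $Q_\kappa\cong\dot H_\kappa\otimes\Q(s,c)/(\text{R1},\text{R2})$ and checks compatibility directly from the $\star$-unitary adjoint property of Cherednik's inner product: $\langle T_i\, p,1\rangle=\langle p,T_i^{-1}\cdot 1\rangle=s\langle p,1\rangle$, and $\langle p,1\rangle=\langle p,g\cdot 1\rangle=\langle g^{-1}p,1\rangle$ handles the cyclic relation (R2). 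You instead verify (B1) and (B2) via the Dunkl eigenequation for $Y_i$ and the two-term Knop--Sahi expansion of $T_i E_{\mathbf a}$. Both routes import a nontrivial black box from Cherednik--Macdonald theory; the paper's is slightly more economical in that it avoids the detailed structure of the basis $\{E_{\mathbf a}\}$, and it dovetails with the next subsection, where precisely this adjoint property is what identifies the topological pairing with Cherednik's inner product.
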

\begin{proof}
Define a linear map $F: \dot{H}_{\kappa} \to \Q(s,c)$ by $X^{\mathbf{a}}T_w \mapsto s^{l(w)}\langle X^{\mathbf{a}},1\rangle$. 
Here, $\langle-,-\rangle$ is Cherednik's inner product on $P_{\kappa}$.
The map is clearly nonzero.  
It is enough to show that the restriction of $F$ on $\dot{H}^0_{\kappa}$ factors through $\dot{H}^0_{\kappa} / \sim$.
The polynomial representation $P_{\kappa}$ is the induced module of $\dot{H}_{\kappa}$. Let $G: \dot{H}_{\kappa} \to P_{\kappa}$ denote the quotient map. It follows from $G(X^{\mathbf{a}}T_w)=s^{l(w)}X^{\mathbf{a}}$ that $F$ factors through $G$:
$$F: \dot{H}_{\kappa} \xrightarrow{G} P_{\kappa} \xrightarrow{F'} \Q(s,c).$$
Here, $F'(X^{\mathbf{a}})=\langle X^{\mathbf{a}},1\rangle$.
Then we have 
$$F(T_wX^{\mathbf{a}})=F'(T_w\cdot X^{\mathbf{a}})=\langle T_w\cdot X^{\mathbf{a}},1\rangle=\langle X^{\mathbf{a}},T_w^{-1}\cdot1\rangle=s^{l(w)}F(X^{\mathbf{a}}),$$
where the third equation follows from the adjoint property of Cherednik's inner product.  
We use the adjoint property again for $g \in \ddot{H}_{\kappa}$ in \cite[Proposition 5.4]{guo2022comparing}:
$$F(X^{\mathbf{a}})=\langle X^{\mathbf{a}},g\cdot 1\rangle=\langle g^{-1}\cdot X^{\mathbf{a}},1\rangle=c^{-2a_1}\langle  X^{\eta(\mathbf{a})},1\rangle=c^{-2a_1}F(X^{\eta(\mathbf{a})}).$$
So the map $F$ factors through $\dot{H}^0_{\kappa} / \sim$. 
\end{proof}

Due to the identification of $\mathrm{BSk}^0_\kappa(\Sigma,\alpha,\alpha)$ with $Q^0_{\kappa}$, we have the following.
\begin{proposition}
    \label{prop-dim-alpha}
    The $\Q(s,c)$ vector space $\mathrm{BSk}^0_\kappa(\Sigma,\alpha,\alpha)$ is of dimension $1$, and spanned by the constant braid $[\gamma_c]$.
\end{proposition}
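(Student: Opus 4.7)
The plan is to read Proposition \ref{prop-dim-alpha} as the direct summary of the three preceding lemmas in this subsection, together with the identification of the constant braid $[\gamma_c]$ with a generator on the algebraic side. First, I would recall that the identification $\mathrm{BSk}_\kappa(\Sigma,\alpha,\alpha)\cong Q_\kappa$ established before Lemma \ref{lemma:automatictotalzero} sends the constant braid $[\gamma_c]$ to the class $[1]\in Q_\kappa$, and that by definition it restricts to an isomorphism $\mathrm{BSk}^0_\kappa(\Sigma,\alpha,\alpha)\cong Q^0_\kappa$, since the degree on braids corresponds under the identification to the total $X$-degree $a_1+\dots+a_\kappa$ on monomials $X^{\mathbf{a}}$.

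Next, I would combine the upper bound and nonvanishing lemmas already proved. The upper bound lemma shows that every $[X^{\mathbf{a}}]$ with $\sum_i a_i=0$ lies in the one-dimensional subspace $W=\Q(s,c)\cdot[1]$, using the induction on $\|\mathbf{a}\|$ via the HOMFLY relation together with the cyclic relation (R2). The nonvanishing lemma constructs the linear functional $F\colon \dot{H}_\kappa\to\Q(s,c)$ from Cherednik's inner product and checks, via the adjoint property and the formula $g^{-1}X^{\mathbf{a}}g=c^{-2a_1}X^{\eta(\mathbf{a})}$, that it descends to $Q^0_\kappa$ and is nonzero at $[1]$. Together these give $\dim_{\Q(s,c)}Q^0_\kappa=1$ with $[1]$ a basis.

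Finally, I would transport this conclusion back through the topological identification. Since $[\gamma_c]\leftrightarrow [1]$ and $[1]$ spans $Q^0_\kappa$, the constant braid $[\gamma_c]$ spans $\mathrm{BSk}^0_\kappa(\Sigma,\alpha,\alpha)$, and this space is one-dimensional. I should also remark briefly that the class of $[\gamma_c]$ is well defined independently of the positions of its endpoints on $\alpha$: two such constant braids differ by a path whose projection to $\op{UConf}_\kappa(\Sigma)$ is contained in $\alpha$, so the corresponding elements of $\mathrm{Br}_{\kappa,1}(\Sigma,\alpha,\alpha,\star)$ represent the same class after applying the corner relation \eqref{eq-corner} to trivialize any reordering.

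There is no real obstacle here; the proposition is essentially a packaging statement. The only point that requires a sentence of justification rather than a pure citation is the compatibility of the degree grading on $\mathrm{BSk}_\kappa(\Sigma,\alpha,\alpha)$ with the total $X$-degree on $Q_\kappa$, which follows because $\theta$ identifies $H_1(\Sigma,\alpha;\Z)$ with $\Z$ in such a way that each $X_i$-generator in Figure \ref{fig-DAHA} has degree $1$, while $T_i$ and $Y_i$ all have degree $0$.
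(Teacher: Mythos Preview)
Your proposal is correct and matches the paper's own treatment: the proposition is stated as an immediate consequence of the identification $\mathrm{BSk}^0_\kappa(\Sigma,\alpha,\alpha)\cong Q^0_\kappa$ together with the two preceding lemmas giving $\dim Q^0_\kappa\le 1$ and $Q^0_\kappa\neq 0$. Your additional remarks on the degree compatibility and on $[\gamma_c]\leftrightarrow[1]$ just make explicit what the paper leaves implicit.
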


By Proposition \ref{prop-dim-alpha} we can write $[\gamma]/[\gamma_c]\in \Q(s,c)$ for $[\gamma]\in\mathrm{BSk}^0_\kappa(\Sigma,\alpha,\alpha)$.

Given $\gamma \in \Omega(\op{UConf}_\kappa(\Sigma),\mathbf{q},\alpha)$, let $\gamma^{-1} \in \Omega(\op{UConf}_\kappa(\Sigma),\alpha,\mathbf{q})$ denote the inverse path $\gamma^{-1}(t)=\gamma(1-t)$ for $t \in [0,1]$.
By Lemma \ref{lemma:automatictotalzero}, the natural map $\mathrm{BSk}^0_\kappa(\Sigma,\alpha,\alpha)\to\mathrm{BSk}_\kappa(\Sigma,\alpha,\alpha)$ induced by the inclusion is an isomorphism.
Denote $\pi:\mathrm{BSk}_\kappa(\Sigma,\alpha,\alpha)\to\mathrm{BSk}^0_\kappa(\Sigma,\alpha,\alpha)$ the inverse of the above map.

\begin{definition}
Define a $\Q$-bilinear form on $\mathrm{BSk}_\kappa(\Sigma,\mathbf{q},\alpha)$:
\begin{gather*}
    \Phi:\mathrm{BSk}_\kappa(\Sigma,\mathbf{q},\alpha)\times\mathrm{BSk}_\kappa(\Sigma,\mathbf{q},\alpha)\to \Q(s,c),\\
    \Phi([\gamma],[\gamma'])\coloneqq \pi([(\gamma')^{-1}\cdot\gamma])/[\gamma_c],
\end{gather*}
where $(\gamma')^{-1}\cdot\gamma \in \Omega(\op{UConf}_\kappa(\Sigma),\alpha,\alpha)$ is the concatenation of path $(\gamma')^{-1}$ and $\gamma$.
We further require that $\Phi$ is $\star$-bilinear, i.e., $\Q(s,c)$ linear in the first argument, and $\Q(s,c)$ conjugate linear in the second argument.
\end{definition}


Recall that Cherednik's inner product $\langle-,-\rangle$ on $P_{\kappa}$ is uniquely determined by the following properties:
\begin{enumerate}
\item $\star$-bilinear: $\langle rf, g \rangle=r\langle f, g \rangle=\langle f, r^*g \rangle$ for $r \in \Q(s,c)$, and $f,g \in P_{\kappa}$. Here $r^*$ is the conjugate of $r$ replacing $s,c$ by $s^{-1},c^{-1}$.
\item $\star$-unitary: $\langle H\cdot f, g \rangle=\langle f, H^* \cdot g \rangle$ for $H \in \ddot{H}_{\kappa}$, and $f,g \in P_{\kappa}$. Here $T_i^*=T_i^{-1}, X_i^*=X_i^{-1}, Y_i^*=Y_i^{-1}$.
\item normalization: $\langle 1,1 \rangle=1$.
\end{enumerate}
See \cite[Proposition 3.3.2]{Cherednik} for more detail.

\vspace{.2cm}
\noindent 
{\em Proof of Theorem \ref{thm main2}:} 
It suffices to show that the bilinear form $\Phi$ also satisfies the corresponding defining properties. 
We check $\star$-unitary: 
$$\Phi([\beta]\cdot [\gamma], [\gamma'])=\pi([(\gamma')^{-1}\cdot\beta\cdot \gamma])/[\gamma_c]=\Phi( [\gamma], [\beta]^{-1}\cdot[\gamma']).$$
The other two properties are obvious.
\qed



        

\printbibliography

\end{document}